\documentclass[twoside,11pt]{article}

%

\usepackage{jmlr2e}

\usepackage{epsfig}
\usepackage{graphicx}
\usepackage{float}

\usepackage{hyperref}
\usepackage{url}
\usepackage{stackengine}
\usepackage{booktabs}

\usepackage{fontenc}
\usepackage{nicefrac}
\usepackage{subcaption}
\usepackage{algpseudocode}

\usepackage{amsmath, mathtools, bm}
\usepackage{dsfont}
\usepackage{mathabx}

\usepackage[usenames, dvipsnames]{xcolor}

\usepackage[T1]{fontenc}
 \usepackage[norelsize, ruled, vlined]{algorithm2e}
 \usepackage{algcompatible}
\usepackage{algpseudocode}

\usepackage{colortbl}

\usepackage{enumitem}
\setlist{noitemsep, topsep=0cm}
\usepackage{tikz}
\usepackage{wrapfig}
\usepackage[utf8]{inputenc}
\usepackage{lscape}

\usepackage{multirow, multicol}
\usepackage{nicematrix}

\allowdisplaybreaks
\setlength{\parskip}{2mm}

\numberwithin{theorem}{section}
\newtheorem{assumption}[theorem]{Assumption}






\newcommand{\mmode}[1]{\( #1 \)}

\newcommand{\abs}[1]{\left\lvert #1 \right\rvert}

\newcommand{\pmatr}[1]{\begin{pmatrix}#1\end{pmatrix}}


\newcommand{\R}[1]{\mathbb{R}^{#1}}

\newcommand{\identity}[1]{\mathbb{I}_{#1}}

\newcommand{\hilbert}{\mathbb{H}}

\newcommand{\nbhood}[2]{B(#1, #2)}
\newcommand{\closednbhood}[2]{B[#1, #2]}


\renewcommand{\P}[1]{\mathbb{P} \left( #1 \right)}


\newcommand{\indicator}{\mathds{1}}
\newcommand{\summ}[2]{\sum\limits_{#1}^{#2}}

\newcommand{\pnorm}[2]{\left\lVert#1\right\rVert_{#2}}
\newcommand{\norm}[1]{\left\lVert#1\right\rVert}

\newcommand{\inprod}[2]{\left\langle#1 \; , \; #2\right\rangle}


\newcommand{\gradient}{\nabla}
\newcommand{\projection}{\Pi}

\newcommand{\define}{\coloneqq}

\newcommand{\opt}{^\ast}
\newcommand{\transp}{^\top}

\renewcommand{\geq}{\geqslant}

\renewcommand{\leq}{\leqslant}


\newcommand{\measurement}{x}

\newcommand{\measdim}{n}

\newcommand{\rep}{f}

\newcommand{\linmap}{\phi}
\newcommand{\linadj}{\phi^a}

\newcommand{\eps}{\epsilon}
\newcommand{\epsbar}{\bar{\eps}}

\newcommand{\accelparam}{\rho}

\newcommand{\stepsize}{\gamma}
\newcommand{\stepsizeh}{\stepsize (\minvar)}

\newcommand{\cost}{c}
\newcommand{\costball}{B_{\cost}}

\newcommand{\minvar}{h}
\newcommand{\minvarf}{f}

\newcommand{\maxvar}{\lambda}
\newcommand{\maxvarh}[1]{\maxvar (\minvar^{#1})}

\newcommand{\noise}{\xi}

\newcommand{\direction}{d}

\newcommand{\oraclevar}{g}
\newcommand{\oraclemin}{g(\minvar)}

\newcommand{\hcone}{\mathcal{K(\eps)}}
\newcommand{\smallhcone}{\mathcal{K(\epsbar)}}

\newcommand{\scaling}{r}

\newcommand{\seth}{\mathcal{H}}
\newcommand{\sethsmall}{\seth \duet}

\newcommand{\setlambda}{\Lambda}

\newcommand{\setlambdasmall}{\setlambda \duetlambda}

\newcommand{\phih}[1]{\linmap (\minvar_{#1})}

\newcommand{\etagrad}[1]{\nabla \eta ( \minvar{#1} )}

\newcommand{\etah}[1]{\eta \left( \minvar_{#1} \right)}
\newcommand{\etabar}{\bar{\eta}}
\newcommand{\etahat}{\widehat{\eta}}

\newcommand{\boundlambda}{B}

\newcommand{\duet}{\left( \epsbar, \etahat \right)}
\newcommand{\duetlambda}{(\etabar , \boundlambda)}

\newcommand{\Mh}{M(\minvar)}
\newcommand{\rh}{r(\minvar)}

\newcommand{\eh}[1]{e \left( \minvar_{#1} \right)}
\newcommand{\ed}[1]{e \left( \direction_{#1} \right)}

\newcommand{\lagrangian}{L \left( \maxvar , \minvar \right)}
\newcommand{\lagrangianh}[1]{L \left( \maxvar (\minvar^{#1}) , \minvar \right)}

\newcommand{\eigval}{\sigma}
\newcommand{\mineig}{\bar{\eigval}}
\newcommand{\maxeig}{\widehat{\eigval}}

\newcommand{\sqfunc}{l(\maxvar)}

\newcommand{\hessian}{H (\maxvar)}

\newcommand{\hessianeta}{\left( \Delta^2 \etah{} \right) }
\newcommand{\gradlambdah}{\left( \frac{\partial \maxvarh{}}{\partial \minvar} \right) }


\newcommand{\ipxphih}[1]{\inprod{\measurement}{\linmap \left( \minvar_{#1} \right) }}

\newcommand{\normphih}[1]{\norm{\linmap \left( \minvar_{#1} \right)}}
\newcommand{\normx}{\norm{\measurement}}
\newcommand{\normlambda}{\norm{\maxvar}}
\newcommand{\normlambdah}{\norm{ \maxvarh{} }}

\newcommand{\xminuseps}{\big( \normx^2 - \eps^2 \big)}
\newcommand{\xminusetaphih}[1]{\measurement - \etah{#1} \phih{#1} }

\newcommand{\xminusepslambda}{\measurement - \frac{\eps}{\normlambda} \maxvar}

\newcommand{\epsminuse}[1]{ \eps^2 - \eh{#1} }
\newcommand{\epsminusebar}{ \eps^2 - \epsbar^2}

\newcommand{\sqterm}{\sqrt{\inprod{\maxvar}{\measurement} - \eps \norm{\maxvar} }}
\newcommand{\sqtermh}[1]{\sqrt{\inprod{\maxvar(\minvar^{#1})}{\measurement} - \eps \norm{\maxvar(\minvar^{#1})} }}


\newcommand{\smooth}{\beta}
\newcommand{\strongconv}{\alpha}


\DeclareSymbolFont{symbolsC}{U}{pxsyc}{m}{n}

\DeclareMathOperator{\interior}{int} 

\DeclareMathOperator{\trace}{tr}
\DeclareMathOperator{\image}{image}

\DeclareMathOperator{\Span}{span}

\DeclareMathOperator{\sgn}{sgn}

\DeclareMathOperator*{\sbjto}{subject \; to}

\DeclareMathOperator*{\argmin}{argmin}
\DeclareMathOperator*{\argmax}{argmax}





\firstpageno{1}


\usepackage{lastpage}
\jmlrheading{26}{2025}{1-\pageref{LastPage}}{12/22; Revised
6/25}{6/25}{22-1380}{Mohammed Rayyan Sheriff, Floor Fenne Redel, and Peyman Mohajerin Esfahani}
\ShortHeadings{Fast algorithms for constrained Linear Inverse Problems}{M.R. Sheriff, F.F. Redel, and P. Mohajerin Esfahani}

\begin{document}

\title{Fast Algorithm for Constrained Linear Inverse Problems}

\author{\name Mohammed Rayyan Sheriff \email m.r.sheriff@tudelft.nl \\
       \name Floor Fenne Redel \email f.f.redel@student.tudelft.nl \\
       \name Peyman {Mohajerin Esfahani} \email p.mohajerinesfahani@tudelft.nl \\
       \addr Delft Center for Systems \& Control\\
Delft University of Technology\\
Delft, The Netherlands \\%
}

\editor{Pradeep Ravikumar}

\maketitle

\begin{abstract}
We consider the constrained Linear Inverse Problem (LIP), where a certain atomic norm (like the $\ell_1 $ norm) is minimized subject to a quadratic constraint. Typically, such cost functions are non-differentiable which makes them not amenable to the fast optimization methods existing in practice. We propose two equivalent reformulations of the constrained LIP with improved convex regularity: (i) a smooth convex minimization problem, and (ii) a strongly convex min-max problem. These problems could be solved by applying existing acceleration-based convex optimization methods which provide better $ O \left( \nicefrac{1}{k^2} \right)$ theoretical convergence guarantee, improving upon the current best rate of $O \left( \nicefrac{1}{k} \right)$. We also provide a novel algorithm named the Fast Linear Inverse Problem Solver (FLIPS), which is tailored to maximally exploit the structure of the reformulations. We demonstrate the performance of FLIPS on the classical problems of Binary Selection, Compressed Sensing, and Image Denoising. We also provide open source \texttt{MATLAB} and \texttt{PYTHON} package for these three examples, which can be easily adapted to other LIPs.
\end{abstract}

\begin{keywords}
linear inverse problems, min-max problems, sparse coding, image processing.
\end{keywords}

\section{Introduction}\label{chap:intro}
Linear Inverse Problems simply refer to the task of recovering a signal from its noisy linear measurements. LIPs arise in many applications, such as image processing \citep{elad2006image,Yaghoobi2009CompressibleApproximations, Aharon2006K-SVD:Representation, Olshausen1997Sparse}, compressed sensing \citep{Donoho2006CompressedSensing, candes2006near,candes2008introduction,Gleichman2011BlindSensing}, recommender systems \citep{recht2010guaranteed}, and control system engineering \citep{nagahara2015maximum}. Formally, given a signal \mmode{\minvarf \in \hilbert}, and its noisy linear measurements \mmode{\R{\measdim} \ni \measurement = \linmap (\minvarf) + \xi}, where, \mmode{\linmap : \hilbert \longrightarrow \R{\measdim}} is a linear measurement operator and \mmode{\xi \in \R{\measdim}} is the measurement noise. The objective is to recover the signal \mmode{\minvarf} given its noisy measurements \mmode{\measurement}, and the measurement operator \mmode{\linmap}. Of specific interest is the case when the number of measurements available are fewer than the ambient dimension of the signal, i.e., \mmode{\measdim < \dim (\hilbert)}. In which case, we refer to the corresponding LIP as being `ill-posed' since there could be potentially infinitely many solutions satisfying the measurements even for the noiseless case. In principle, one cannot recover a generic signal \mmode{\minvarf} from its measurements if the problem is ill-posed. However, the natural signals we encounter in practice often have much more structure to be exploited. For instance, natural images and audio signals tend to have a sparse representation in a well-chosen basis, matrix valued signals encountered in practice have low rank, etc. Enforcing such a low-dimensional structure into the recovery problem often suffices to overcome its ill-posedness. This is done by solving an optimization problem with an objective function that promotes the expected low-dimensional structure in the solution like sparsity, low-rank, etc. It is now well established that under very mild conditions, such optimization problems and even their convex relaxations often recover the true signal almost accurately \citep{Donoho2006ForSolution, Donoho2006CompressedSensing, candes2008introduction}.

\subsection{Problem setup}
Given \mmode{\measurement \in \R{\measdim}}, the linear operator \mmode{ \linmap : \hilbert \longrightarrow \R{\measdim} }, and \mmode{\eps > 0}, the object of interest in this article is the following optimization problem
\begin{equation}
\label{eq:lip-main}
\begin{cases}
	  \begin{aligned}
		& \argmin_{\minvarf \; \in \; \hilbert}  && \cost (\minvarf) \\
		& \sbjto && \norm{ \measurement - \linmap (\minvarf) } \leq \eps ,
	\end{aligned}
	\end{cases}
\end{equation}  
where \mmode{\hilbert} is some finite-dimensional Hilbert space with the associated inner-product \mmode{\inprod{\cdot}{\cdot}}. The constraint \mmode{ \norm{ \measurement - \linmap (\minvarf) } \leq \eps } is measured using the norm derived from an inner product on \mmode{\R{\measdim}} (it is independent from the inner product \mmode{\inprod{\cdot}{\cdot}} on the Hilbert space \mmode{\hilbert}).

The objective function is the mapping \mmode{\cost : \hilbert \longrightarrow \R{}} which is known to promote the low-dimensional characteristics desired in the solution. For example, if the task is to recover a sparse signal, we chose \mmode{\cost(\cdot) = \pnorm{\cdot}{1}}; if \mmode{\hilbert} is the space of matrices of a fixed order, then \mmode{\cost(\cdot) = \pnorm{\cdot}{*}} (the Nuclear-norm) if low-rank matrices are desired \citep{Candes2009ExactOptimization}. In general, the objective function is assumed to be \emph{norm like}.

\noindent \textbf{Main challenges of \eqref{eq:lip-main} and existing state of the art methods to solve it}. One of the main challenges to tackle while solving \eqref{eq:lip-main} is that, the most common choices of the cost function \mmode{\cost (\cdot)} like the \mmode{\ell_1} norm, are not differentiable everywhere. In particular, the issue of non-differentiability gets amplified since it is prevalent precisely at the suspected optimal solution (sparse vectors). Thus, canonical gradient-based schemes do not apply to~\eqref{eq:lip-main} with such cost functions. A common workaround is to use the notion of sub-gradients instead, along with a diminishing step-size. However, the Sub-Gradient Descent method (SGD) for generic convex problems converges only at a rate of \( O(\nicefrac{1}{\sqrt{k}})\) \citep{Boyd2003SubgradientMethods}. For high-dimensional signals like images, this can be tiringly slow since the computational complexity scales exponentially with the signal dimensions.

\subsection{Existing methods} 
The current best algorithms overcome the bottleneck of non differentiability in \eqref{eq:lip-main} by instead working with the more flexible notion of \emph{proximal gradients}, and applying them to a suitable reformulation of the LIP \eqref{eq:lip-main}. We primarily focus on two state-of-the-art methods in this article: the Chambolle-Pock algorithm (CP) \citep{Chambolle2010AImaging} and the C-SALSA \citep{Afonso2009AnProblems} algorithm, that solve the LIP \eqref{eq:lip-main}.

\noindent (i)\emph{ The Chambolle-Pock algorithm}: Using the \emph{convex indicator function} \mmode{\indicator_{\closednbhood{\measurement}{\eps}} (\cdot)} of \mmode{\closednbhood{\measurement}{\eps}},\footnote{The convex indicator function \mmode{\indicator_S (z)} of a given convex set \mmode{S} is \mmode{\indicator_{S} (z) = 0 } if \mmode{z \in S}; \mmode{= +\infty} if \mmode{z \notin S}.} the constraints in LIP \eqref{eq:lip-main} are incorporated into the objective function to get
\begin{equation}\label{eq:main_with_indicator}
    \min _{\minvarf \in  \hilbert} \quad \cost (\minvarf) + \indicator_{B[\measurement,\epsilon]}( \linmap( \minvarf)).
\end{equation}
The indicator function \mmode{\indicator_S (\cdot)} of any closed convex set \mmode{S} is proper and lower-semicontinuous. Consequently, its convex-conjugate satisfies \((\indicator^*_{S})^*=\indicator_{S}\). Since \mmode{ \indicator\opt_{B[\measurement,\epsilon]}( u) =  \langle \measurement, u \rangle + \epsilon \|u\| }, for \mmode{\closednbhood{\measurement}{\eps}} in particular, we have
\begin{equation}\label{eq:CP}
    \indicator_{B[\measurement,\epsilon]}( \linmap( \minvarf)) \; = \; \max _{u \in  \R{\measdim}} \quad \langle \linmap(\minvarf),u \rangle - \big( \langle \measurement, u \rangle + \epsilon \|u\| \big).
\end{equation}
Incorporating \eqref{eq:CP} in \eqref{eq:main_with_indicator}, the LIP reduces to the following equivalent min-max formulation
\begin{equation}\label{eq:CP_minmax}
    \min_{f \in \hilbert} \; \max _{u \in  \R{\measdim}} \quad \cost(\minvarf) + \langle \linmap(\minvarf),u \rangle - (\langle \measurement, u \rangle + \epsilon \|u\|).
\end{equation}
The min-max problem \eqref{eq:CP_minmax} falls under a special subclass of convex-concave min-max problems with bi-linear coupling between the minimizing (\minvarf) and maximizing (u) variables. A primal-dual algorithm was proposed in \citep{Chambolle2010AImaging, Chambolle2016OnAlgorithm} to solve such problems under the condition that the mappings \(f \longmapsto \cost(\minvarf)\) and \(u \longmapsto \inprod{x}{u} + \epsilon \|u\|\) are proximal friendly. It turns out that in many relevant problems particularly where \(\cost(\minvarf)=\|\minvarf\|_1\) the proximal operator of \mmode{\minvarf \longmapsto \cost(\minvarf)} is indeed easily computable \citep{Beck2009AProblems}. Moreover, the proximal operator for the mapping \mmode{u \longmapsto \inprod{x}{u} + \epsilon \|u\|} corresponds to block soft-thresholding, and is also easy to implement. Under such a setting the CP algorithm has an ergodic convergence rate of \( O(\nicefrac{1}{k})\) for the duality gap of \eqref{eq:CP_minmax}. This is already an improvement over the \( O(\nicefrac{1}{\sqrt{k}})\) rate in canonical sub-gradient ``descent'' algorithms, and is currently the best convergence guarantee that exists for Problem \eqref{eq:lip-main}.

\noindent \emph{(ii) The C-SALSA algorithm}: The Constrained Split Augmented Lagrangian Shrinkage Algorithm (C-SALSA) \citep{Afonso2009AnProblems} is an algorithm in which the Alternating Direction Method of Multipliers (ADMM) is applied to problem \eqref{eq:main_with_indicator}. For this problem, ADMM solves the LIP based on variable splitting using an Augmented Lagrangian Method (ALM). In a nutshell, the algorithm iterates between optimizing the variable \mmode{\minvarf} and the Lagrange multipliers until they converge. Even though the convergence rates for C-SALSA are not better than that of the CP algorithm, it is empirically found to be fast. Of particular interest is the case when \(\phi\) satisfies \(\phi\transp \phi = \identity{\measdim} \), in which case, further simplifications in the algorithm can be done that improve its speed  for all practical purposes.

One of the objectives of this work is to provide an algorithm that is demonstrably faster than the existing methods, particularly for large scale problems. Given that solving an LIP is such a commonly arising problem in signal processing and machine learning, a fast and easy to implement is always desirable. This article precisely caters to this challenge. In \citep{Sheriff2020NovelProblems}, the LIP \eqref{eq:lip-main} was equivalently reformulated as a convex-concave min-max problem:
\begin{equation}
\label{eq:JMLR-lip-min-max}
\min_{\minvar \in \costball} \ \sup_{\maxvar \in \setlambda} \quad 2 \sqterm \; - \; \inprod{\maxvar}{\phih{}} ,
\end{equation}
where \mmode{\costball = \{ \minvar \in \hilbert : \cost (\minvar) \leq 1 \}} and \mmode{\setlambda \define \{ \maxvar \in \R{\measdim} : \inprod{\maxvar}{\measurement} - \eps \normlambda > 0 \}}. A solution to the LIP \eqref{eq:lip-main} can be computed from a saddle point \mmode{(\minvar\opt , \maxvar\opt)} of the min-max problem \eqref{eq:JMLR-lip-min-max}. Even though primal-dual schemes like Gradient Descent-Ascent with appropriate step-size can be used to compute a saddle-point of the min-max problem \eqref{eq:JMLR-lip-min-max}; such generic methods fail to exploit the specific structure of the min-max form \eqref{eq:JMLR-lip-min-max}. It turns out that equivalent problems with better convex regularity (like smoothness) can be derived from \eqref{eq:JMLR-lip-min-max} by exploiting the specific nature of this min-max problem. 

\subsection{Contribution}
In view of the existing methods mentioned above, we summarize the contributions of this work as follows: 
\begin{enumerate}[leftmargin = *, label = (\alph*)]
\item \textbf{Exact reformulations with improved \mmode{O(\nicefrac{1}{k^2})} convergence rates.} We build upon the min-max reformulation \eqref{eq:JMLR-lip-min-max} and proceed further on two fronts to obtain equivalent reformulations of the LIP \eqref{eq:lip-main} with better convex regularities. These reformulations open the possibility for applying acceleration based methods to solve the LIP \eqref{eq:lip-main} with faster rates of convergence~\mmode{O(\nicefrac{1}{k^2})}, which improves upon the existing best rate of \mmode{O(\nicefrac{1}{k})}.
\begin{enumerate}[leftmargin = *, label = (i)]
    \item \emph{Exact smooth reformulation}: We explicitly solve the maximization over \mmode{\maxvar} in \eqref{eq:JMLR-lip-min-max} (Proposition \ref{proposition:max-problem-solution}) to obtain an equivalent smooth convex minimization problem (Theorem~\ref{theorem:smooth-reformulation}).
    
    \item \emph{Strongly convex min-max reformulation}: We propose a new min-max reformulation \eqref{eq:ACP-min-max-problem} that is slightly different from \eqref{eq:JMLR-lip-min-max} and show that it has strong-concavity in \mmode{\maxvar} (Lemma \ref{lemma:convex-regularity-of-dual-function}). This allows us to apply accelerated version of the Chambolle-Pock algorithm \citep[Algorithm 4]{Chambolle2016OnAlgorithm}; which converges at a rate of \mmode{O(\nicefrac{1}{k^2})} in duality gap for ergodic iterates (Remark \ref{remark:ACP-rate})
\end{enumerate}

\item \textbf{Tailored fast algorithm}: 
We present a novel algorithm (Algorithm \ref{algo:FLIPS}) called the Fast LIP Solver (FLIPS), which exploits the structure of the proposed smooth reformulation \eqref{eq:LIP-smooth-problem} better than the standard acceleration based methods. The novelty of FLIPS is that it combines ideas from canonical gradient descent schemes to find a descent direction; and then from the Frank-Wolfe (FW) algorithm \citep{Jaggi2013RevisitingOptimization} in taking a step in the descent direction. We provide an explicit characterization of the optimal step size which could be computed without significant additional computations. We demonstrate the performance of FLIPS on the standard problems of Binary Selection, Compressed sensing, and Image Denoising. particularly for Image Denoising, we show that FLIPS outperforms the state-of-the-art methods for \eqref{eq:lip-main} like CP and C-SALSA in both number of iterations and CPUtime.

\item \textbf{Open source Matlab package}: Associated with this algorithm, we also present an open-source Matlab package that includes the proposed algorithm (and also the implementation of CP and C-SALSA) \citep{FLIPSpackage}.
\end{enumerate}

This article is organized as follows. In Section \ref{section:equivalent-formulations-of-LIP} we discuss two equivalent reformulations of the LIP; one as a smooth minimization problem in subsection \ref{subsection:equivalent-smooth-problem}, and then as a min-max problem with strong-convexity in subsection \ref{subsection:strongly-convex-min-max-LIP}. Subsequently, in Section \ref{chap:algorithm} the FLIPS algorithm is presented, followed by the numerical simulations in section \ref{chap:numericalsimulations}. All the proofs of results in this article are relegated towards the end of this article in Section \ref{chap:discussion-proofs} for better readability.

\noindent \textbf{Notations.}
Standard notations have been employed for the most part. The interior of a set \(\mathcal{S}\) as int\((\mathcal{S})\). The \(n\times n\) identity matrix is denoted by \(\identity{n}\). For a matrix \(M\) we let \(\trace(M)\) and \(\image (M) \) denote its trace and image respectively. The gradient of a continuously differentiable function \(\eta(\cdot)\) evaluated at a point \mmode{\minvar} is denoted by \mmode{\etagrad{}}. 

Generally  \(\norm{\cdot}\) is the norm associated with the inner product \mmode{\inprod{\cdot}{\cdot}} of the Hilbert space \mmode{\hilbert}, unless specified otherwise explicitly. Given two Hilbert spaces \mmode{\big( \, \hilbert_1 , \inprod{\cdot}{\cdot}_1 \big)} and \mmode{\big( \, \hilbert_2 , \inprod{\cdot}{\cdot}_2 \big)} and a linear map \mmode{T : \hilbert_1 \longrightarrow \hilbert_2}, its adjoint \(T^a\) is another linear map \mmode{T^a : \hilbert_2 \longrightarrow \hilbert_1} such that \mmode{ \inprod{v}{T(u)}_2 \; = \; \inprod{T^a(v)}{u}_1 } for all \mmode{u \in \hilbert_1} and \mmode{v \in \hilbert_2}.

\section{Equivalent reformulations with improved convex regularity}
\label{section:equivalent-formulations-of-LIP}
We consider the LIP \eqref{eq:lip-main} under the setting of following two assumptions that are enforced throughout the article.
\begin{assumption}[Cost function]
\label{assumption:cost-function-main}
The cost function \mmode{\cost : \hilbert \longrightarrow \R{}} is
\begin{enumerate}[leftmargin = *, label = (\alph*)]
    \item \emph{positively homogenuous}: For every \mmode{\scaling \geq 0} and \mmode{\minvarf \in \hilbert}, \mmode{\cost (\scaling \minvarf) \; = \; \scaling \cost (\minvarf)}
    
    \item \emph{inf compact}: the unit sub-level set \mmode{\costball \define \{ \minvarf \in \hilbert : \cost (\minvarf) \leq 1 \}} is compact 
    
    \item \emph{quasi convex}: the unit sub-level set \mmode{\costball} is convex.
\end{enumerate}
\end{assumption}
In addition to the conditions in Assumption \ref{assumption:cost-function-main}, if the cost function is symmetric about the origin, i.e., \mmode{\cost (-\minvarf) = \cost (\minvarf)} for all \mmode{\minvarf \in \hilbert}, then it is indeed a \emph{norm} on \mmode{\hilbert}. Thus, many common choices like the \mmode{\ell_1} and Nuclear norms for practically relevant LIPs are included in the setting of Assumption \ref{assumption:cost-function-main}.

\begin{assumption}[Strict feasibility]
\label{assumption:lip-main}
We shall assume throughout this article that \mmode{\norm{\measurement} > \eps > 0} and that the corresponding LIP \eqref{eq:lip-main} is strictly feasible, i.e., there exists \mmode{\minvarf \in \hilbert} such that \mmode{\norm{\measurement - \linmap (\minvarf)} < \eps}.
\end{assumption}

\subsection{Reformulation as a \emph{smooth} minimization problem}
\label{subsection:equivalent-smooth-problem}
Let the mapping \mmode{e : \R{\measdim} \longrightarrow [0,+\infty)} be defined as
\begin{equation}
\label{eq:e-func-def}
\eh{} \define \quad \min_{\theta \in \R{}} \norm{\measurement - \theta \linmap(\minvar)}^2 \quad = \quad \begin{cases}
\begin{aligned}
& \norm{\measurement}^2 && \text{if } \linmap (\minvar) = 0, \\
& \norm{\measurement}^2 - \frac{\abs{\inprod{\measurement}{\linmap(\minvar)}}^2 }{\norm{\linmap(\minvar)}^2 }  && \text{otherwise}.
\end{aligned}
\end{cases}
\end{equation}
Consider the family of convex cones \mmode{ \left\{ \smallhcone : \epsbar \in (0, \eps] \right\} } defined by 
\begin{equation}
\label{eq:hcone-definition}
\smallhcone \define \left\{ \minvar \in \hilbert : \inprod{\measurement}{\linmap(\minvar)} \; > \; 0, \text{ and } \eh{} \leq \epsbar^2 \right\} , \text{ for every } \epsbar \in (0, \eps] .
\end{equation}
Equivalently, observe that \mmode{ \minvar \in \smallhcone} if and only if \mmode{ \ipxphih{} \geq \normphih{} \sqrt{\big( \normx^2 - \epsbar^2 \big)} }. Therefore, it is immediately evident that \mmode{\smallhcone} is convex for every \mmode{\epsbar \in (0, \eps] }.

\begin{definition}[New objective function]
\label{def:etah-definition}
Consider \mmode{\measurement}, \mmode{\linmap}, and \mmode{\eps > 0} such that Assumption \ref{assumption:lip-main} holds, and let the map \mmode{\eta : \hcone \longrightarrow [0, + \infty)} be defined by
\begin{equation}
\label{eq:etah-definition}
\etah{} \define \frac{\norm{\measurement}^2 - \eps^2}{\ipxphih{} \; + \; \normphih{} \sqrt{ \eps^2 - \eh{}  } } .
\end{equation}
\end{definition}

\begin{figure}[h]
    \centering
    \begin{subfigure}[b]{0.49\textwidth}
         \centering
         \includegraphics[width=\textwidth]{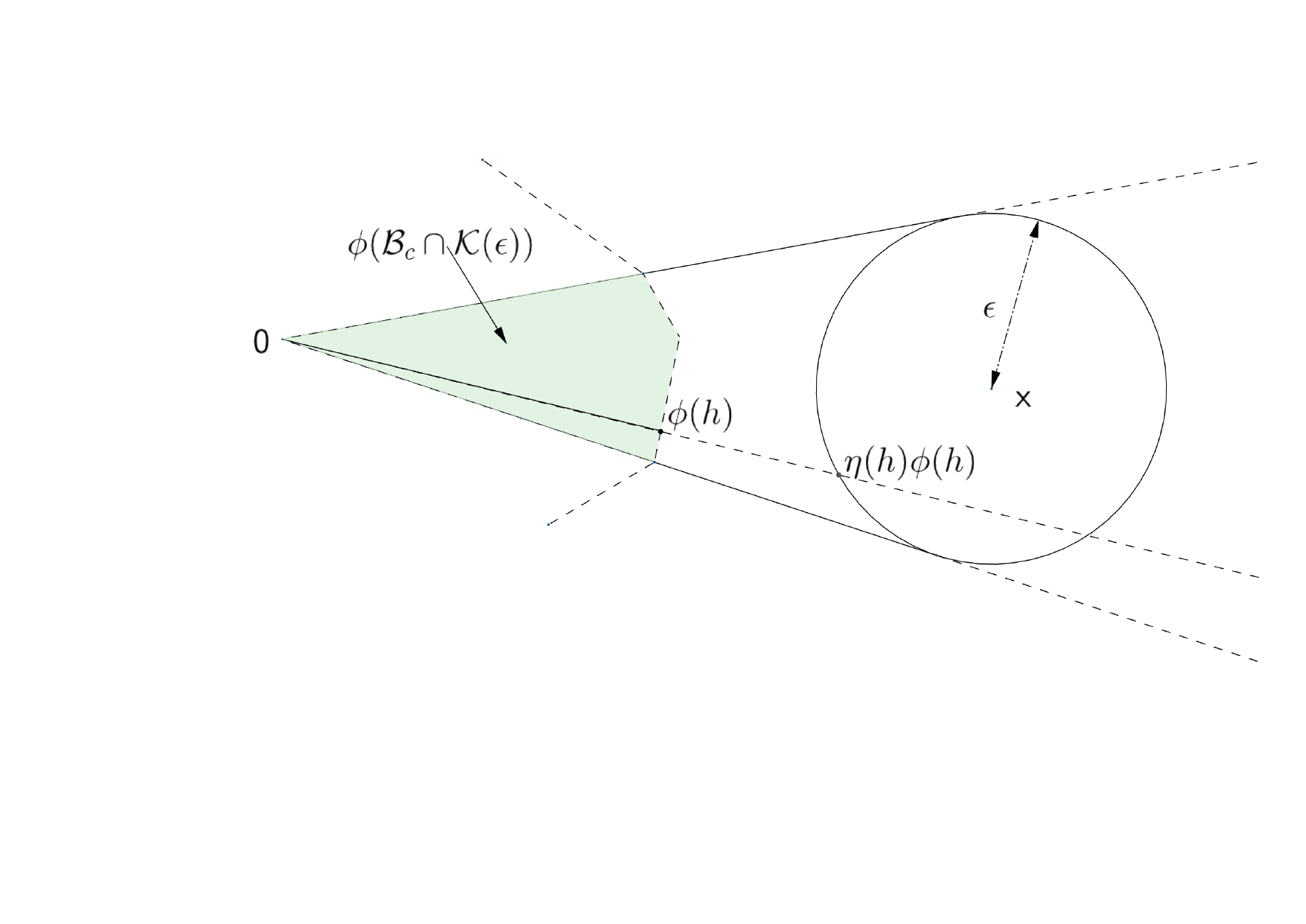}
         \caption{Diagram presenting the relation between $\phi(\costball\cap\mathcal{K(\eps)})$,  $\minvar$, $\etah{}$,  $\linmap(h)$, $\measurement$ and $\epsilon$.}
         \label{fig:2Ddiagram}
    \end{subfigure}
    \hfill
    \begin{subfigure}[b]{0.45\textwidth}
         \centering
         \includegraphics[width=\textwidth]{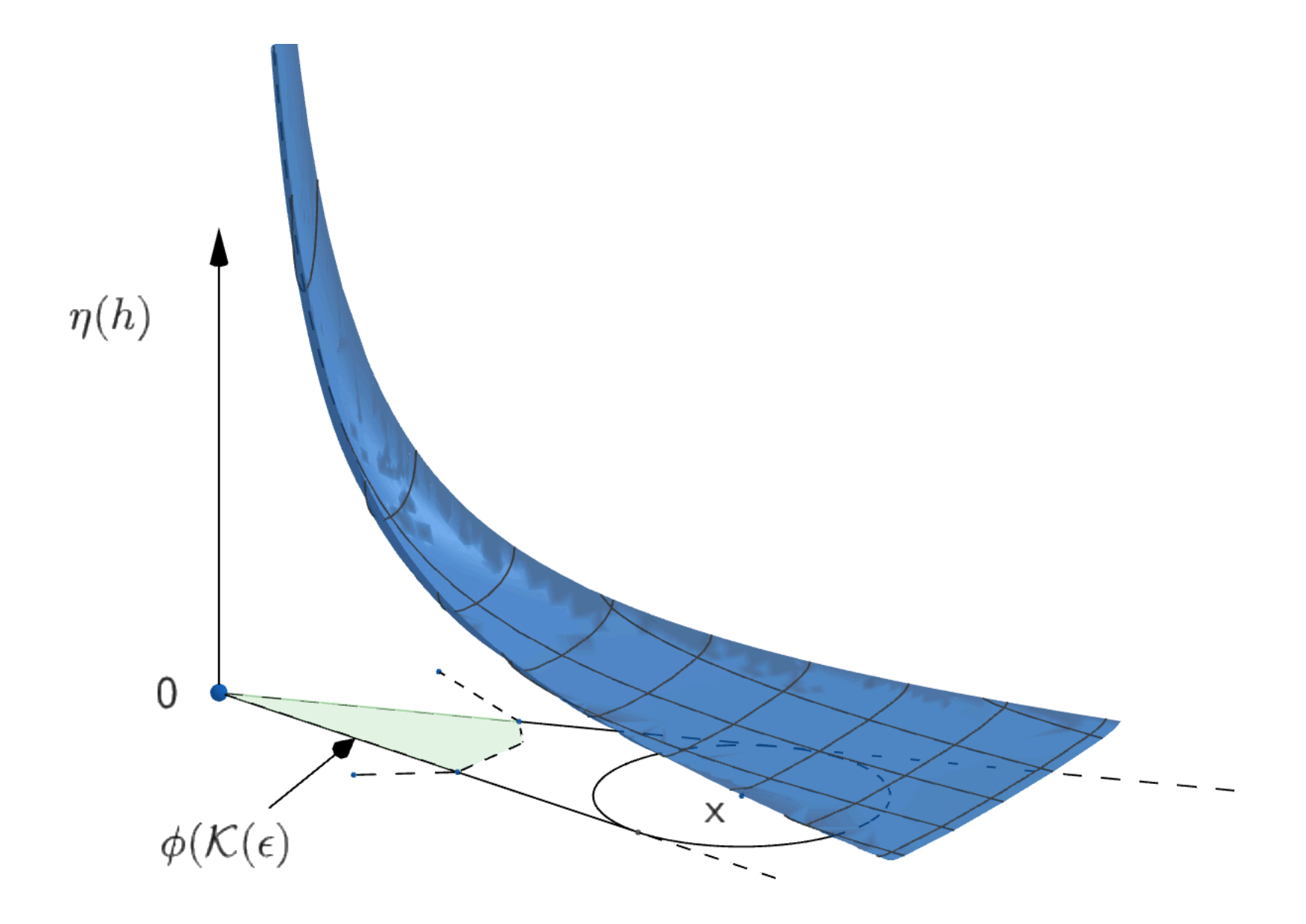}
         \caption{Diagram presenting the $\etah{}$ evaluated over $\phi(\mathcal{K({\eps})})$.}
         \label{fig:3Ddiagram}
    \end{subfigure}
    \caption{Graphical overview of \mmode{\eta, \linmap\big(\hcone\big)}.}
    \label{fig:diagrams}
\end{figure}

\begin{remark}[Physical interpretation of \mmode{\hcone} and \mmode{\etah{}}]\rm{
For any \mmode{\minvar \in \hilbert}, \mmode{\minvar \in \hcone} if and only if the ray \mmode{\{\theta \phih{} : \theta \geq 0\}} (i.e., the line going from origin and passing through \mmode{\phih{}}) intersects with \mmode{\closednbhood{\measurement}{\eps}}. Now, for any \mmode{\minvar \in \hcone},  the value \mmode{\etah{}} is the minimum amount by which the point \mmode{\phih{}} must be scaled so that it intersects with the closed neighborhood \mmode{\closednbhood{\measurement}{\eps}} of \mmode{\measurement} as depicted in Figure~\ref{fig:2Ddiagram}.
}\end{remark}

\begin{proposition}[Derivatives of \mmode{\eta}]
\label{prop:eta-derivatives}
Consider \mmode{\measurement, \linmap , \eps > 0} such that Assumption \ref{assumption:lip-main} holds, and 
\mmode{\etah{}} as defined in \eqref{eq:etah-definition}. then the following assertions hold.
\begin{enumerate}[leftmargin = *, label = \rm{(\roman*)}]
\item \emph{\textbf{Convexity}:} The function \mmode{ \eta : \hcone \longrightarrow [0,+\infty) } is convex.

\item \emph{\textbf{Gradients}:} The function \mmode{ \eta : \hcone \longrightarrow [0,+\infty) } is differentiable at every \mmode{\minvar \in \interior \left( \hcone \right) = \{ \minvar \in \hilbert : \inprod{\measurement}{\linmap(\minvar)} > 0, \text{ and } \eh{} < \eps^2 \} }, and the derivative is given by
    \begin{equation}
    \label{eq:eta-derivatives}
    \etagrad{} = \frac{- \etah{} }{ \normphih{} \sqrt{\eps^2 - \eh{}} } \; \linadj \Big( \xminusetaphih{} \Big)  \quad \text{ for all } \minvar \in \interior \left( \hcone \right) ,
    \end{equation}
    where \mmode{\linadj} is the adjoint operator of \mmode{\linmap}.

\item \emph{\textbf{Hessian}:} the function \mmode{ \eta : \hcone \longrightarrow [0,+\infty) } is twice continuously differentiable at every \mmode{\minvar \in \interior \left( \hcone \right) }. The hessian is the linear operator
\begin{equation}
\label{eq:hessian-eta-definition}
\hilbert \ni v \longmapsto \hessianeta (v) \define \left( \linadj \circ M(\minvar) \circ \linmap \right) (v) \; \in \; \hilbert ,
\end{equation}
where \mmode{ M: \interior (\hcone) \longrightarrow \R{\measdim} \times \R{\measdim} } is a continuous matrix-valued map.\footnote{Please see \eqref{eq:Mh-definition} for the precise definition of \mmode{\Mh}.}
\end{enumerate}
\end{proposition}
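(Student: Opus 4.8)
The plan is to exploit that $\etah{}$ depends on $\minvar$ only through the vector $\phih{} \in \R{\measdim}$: I would write $\eta = \psi \circ \linmap$ for a scalar function $\psi$ defined on a suitable set in $\R{\measdim}$, prove the three assertions for $\psi$, and transfer them through the linear map $\linmap$ --- convexity is preserved under precomposition with a linear map, and the chain rule turns $\nabla\psi$ into $\linadj\nabla\psi$ and $\nabla^2\psi$ into $\linadj\circ\nabla^2\psi\circ\linmap$, which is precisely the shape of \eqref{eq:eta-derivatives} and \eqref{eq:hessian-eta-definition}. Concretely, put $p(z)\define\inprod{\measurement}{z}$ for $z\in\R{\measdim}$ and $c\define\normx^2-\eps^2>0$ (Assumption \ref{assumption:lip-main}), and let $\mathcal C\define\{z\in\R{\measdim}:p(z)>0,\ p(z)^2\ge c\norm{z}^2\}$; by the equivalence noted right after \eqref{eq:hcone-definition} one has $\hcone=\linmap^{-1}(\mathcal C)$, and $\interior(\hcone)=\linmap^{-1}(\interior\mathcal C)$ with $\interior\mathcal C=\{z:p(z)>0,\ p(z)^2>c\norm{z}^2\}$. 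For $z\ne 0$, $p(z)^2-c\norm{z}^2=\norm{z}^2\big(\eps^2-(\normx^2-p(z)^2/\norm{z}^2)\big)$, so by \eqref{eq:e-func-def} the denominator of \eqref{eq:etah-definition} equals $p(\phih{})+\sqrt{p(\phih{})^2-c\norm{\phih{}}^2}$; hence $\etah{}=\psi(\phih{})$ with $\psi(z)\define\frac{c}{p(z)+\sqrt{p(z)^2-c\norm{z}^2}}$ for $z\in\mathcal C$. Solving the scalar quadratic $\norm{\measurement-\theta z}^2\le\eps^2$ in $\theta$ and rationalising its smaller root shows equivalently $\psi(z)=\min\{\theta\ge 0:\theta z\in\closednbhood{\measurement}{\eps}\}$, the minimum being attained (closed ball) and strictly positive (since $\normx>\eps$); this is the interpretation recorded in the Remark above.

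For assertion (i) I would use this geometric description directly, since $\hcone$ is already known to be convex. Fix $\minvar_0,\minvar_1\in\hcone$ and $\lambda\in(0,1)$; put $\minvar_\lambda\define(1-\lambda)\minvar_0+\lambda\minvar_1\in\hcone$ and $\theta_i\define\etah{i}>0$, so that $\theta_i\phih{i}\in\closednbhood{\measurement}{\eps}$. With $\theta_\lambda\define\big((1-\lambda)/\theta_0+\lambda/\theta_1\big)^{-1}$ and $\mu\define(1-\lambda)\theta_\lambda/\theta_0$, a one-line check gives $\mu\in(0,1)$ and $1-\mu=\lambda\theta_\lambda/\theta_1$, and by linearity of $\linmap$, $\theta_\lambda\phih{\lambda}=\mu\,(\theta_0\phih{0})+(1-\mu)\,(\theta_1\phih{1})$ is a convex combination of two points of $\closednbhood{\measurement}{\eps}$; hence $\theta_\lambda\phih{\lambda}\in\closednbhood{\measurement}{\eps}$ and so $\etah{\lambda}\le\theta_\lambda$. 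The weighted arithmetic--harmonic mean inequality then gives $\theta_\lambda\le(1-\lambda)\theta_0+\lambda\theta_1$, whence $\etah{\lambda}\le(1-\lambda)\etah{0}+\lambda\etah{1}$.

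For (ii) and (iii), note that on $\interior\mathcal C$ both the radicand $p(z)^2-c\norm{z}^2$ and the denominator $p(z)+\sqrt{p(z)^2-c\norm{z}^2}$ of $\psi$ are strictly positive, so $\psi$ is $C^\infty$ there and $\eta=\psi\circ\linmap$ is $C^\infty$ on the open set $\interior(\hcone)$ of the statement. Writing $R(z)\define\sqrt{p(z)^2-c\norm{z}^2}$ and using $\nabla p(z)=\measurement$, $\nabla\norm{z}^2=2z$, a direct differentiation of $\psi=c/(p+R)$ collapses to $\nabla\psi(z)=-\tfrac{\psi(z)}{R(z)}\big(\measurement-\psi(z)z\big)$. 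The chain rule then gives $\etagrad{}=\linadj\big(\nabla\psi(\phih{})\big)$, and substituting $\psi(\phih{})=\etah{}$ and $R(\phih{})=\normphih{}\sqrt{\eps^2-\eh{}}$ reproduces \eqref{eq:eta-derivatives}. Differentiating $\nabla\psi$ once more yields a matrix-valued map $z\longmapsto\nabla^2\psi(z)$ that is continuous (indeed smooth) on $\interior\mathcal C$; setting $\Mh\define\nabla^2\psi(\phih{})$, which is continuous on $\interior(\hcone)$ because $\minvar\mapsto\phih{}$ is continuous, the chain rule gives $\nabla^2\eta(\minvar)=\linadj\circ\Mh\circ\linmap$, i.e. \eqref{eq:hessian-eta-definition}, with the explicit form of $\Mh$ in \eqref{eq:Mh-definition} obtained by reading off this second derivative.

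The only genuinely laborious part is this last differentiation: pushing $\nabla\psi(z)=-\tfrac{\psi(z)}{R(z)}\big(\measurement-\psi(z)z\big)$ through the product and quotient rules and collecting terms into the closed-form matrix $\Mh$ of \eqref{eq:Mh-definition}. Everything else --- the reduction to $\R{\measdim}$, the two applications of the chain rule (which produce $\linadj$ automatically), and the convexity argument --- is soft, so I expect the Hessian algebra to be the main, purely computational, obstacle.
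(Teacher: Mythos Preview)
Your proposal is correct but takes a genuinely different route from the paper. The paper's proof rests on the variational representation $\etah{} = \sup_{\maxvar \in \setlambda} L(\maxvar,\minvar)$ established in Proposition~\ref{proposition:max-problem-solution}: convexity is then immediate because each $L(\maxvar,\cdot)$ is affine in $\minvar$; differentiability and the gradient formula follow from Danskin's theorem together with uniqueness of the maximiser $\maxvarh{}$, yielding $\etagrad{} = -\linadj(\maxvarh{})$; and the Hessian is obtained by differentiating this identity, with the derivative of $\minvar\mapsto\maxvarh{}$ worked out in a separate lemma (Lemma~\ref{lemma:derivative-of-lambdah}). You instead exploit the factorisation $\eta=\psi\circ\linmap$ and work directly with the closed form of $\psi$: your convexity argument via the gauge interpretation $\psi(z)=\min\{\theta\ge 0:\theta z\in\closednbhood{\measurement}{\eps}\}$ and the harmonic--arithmetic mean inequality is elegant and self-contained, and the gradient and Hessian come from straightforward calculus on $\psi$ followed by the chain rule. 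Your approach is more elementary (no Danskin, no auxiliary maximisation), whereas the paper's approach makes the link to the dual variable $\maxvarh{}$ explicit from the outset, which pays off later when bounding the Hessian eigenvalues (Lemma~\ref{lemma:min-max-eig-values-Mh}) and in the strongly convex min--max reformulation of Section~\ref{subsection:strongly-convex-min-max-LIP}.
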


\begin{definition}[Convex Regularity]
Let \mmode{\seth} be a convex set  and consider \mmode{\eta : \seth \longrightarrow \R{} }.
\begin{enumerate}
\item {\em (Smoothness)} - The mapping \mmode{\eta : \seth \longrightarrow \R{}} is said to be \mmode{\smooth}-smooth if there exists \mmode{\smooth \geq 0} such that the inequality
\begin{equation}
\label{eq:def-smoothness}
\norm{ \etagrad{} - \etagrad{'} } \ \leq \ \smooth \; \norm{\minvar - \minvar'} \text{ holds for all } \minvar , \minvar' \in \seth.
\end{equation}

\item {\em (Strong-Convexity)} -  The mapping \mmode{\eta : \seth \longrightarrow \R{}} is said to be \mmode{\strongconv}-strongly convex if there exists \mmode{\strongconv > 0} such that the inequality
\begin{equation}
\label{eq:def-strong-convexity}
\eta (\minvar') \; \geq \; \etah{} + \inprod{\etagrad{}}{\minvar' - \minvar} + \frac{\strongconv}{2}  \norm{\minvar' - \minvar}^2 \text{ holds for all } \minvar , \minvar' \in \seth.
\end{equation}
\end{enumerate}
\end{definition}

\noindent\textbf{Challenges for smoothness: }
As such, the mapping \mmode{\eta : \costball \cap \hcone \longrightarrow [0, + \infty)] } is not smooth in the sense of \eqref{eq:def-smoothness}. This is due to two reasons. 
\begin{enumerate}[leftmargin = *]
\item {\em High curvature around origin} - Consider any \mmode{\minvar \in \costball \cap \hcone}, then for \mmode{\theta \in (0, 1]} it is immediate from \eqref{eq:etah-definition} that \mmode{\eta ( \theta \minvar) \; \propto \; \nicefrac{1}{\theta} }, and therefore, the mapping \mmode{(0,1] \ni \theta \longmapsto \eta (\theta \minvar)} is not smooth. Consequently, the mapping \mmode{\eta : \costball \cap \hcone \longrightarrow \R{} } cannot be smooth. As shown in Figure \ref{fig:3Ddiagram} as a simple example, it is easily seen that \mmode{\eta} achieves arbitrarily large values and arbitrarily high curvature as \mmode{\norm{h} \longrightarrow 0}.

\item {\em High curvature at the boundary of \mmode{\hcone}} - It must be observed that \mmode{\eta} is not differentiable on the boundary of the cone \mmode{\hcone}. Moreover, as \mmode{\eh{} \uparrow \eps^2}, i.e., \mmode{\minvar} approaches the boundary of the cone \mmode{\hcone} from its interior, it is apparent from \eqref{eq:eta-derivatives} that the gradients of \mmode{\eta} are unbounded.
\end{enumerate}
It turns out that by avoiding these two scenarios (which will be made more formal shortly), \mmode{\eta} is indeed smooth over the rest of the set.

\begin{proposition}[Convex regularity of \mmode{\eta}]
\label{proposition:eta-convex-regularity}
Consider the LIP in \eqref{eq:lip-main} under the setting of Assumption \ref{assumption:lip-main} with \mmode{\cost\opt} being its optimal value. For every \mmode{\etahat \geq \cost\opt} and \mmode{\epsbar \in (0, \eps) }, consider the convex set
\begin{equation}
\label{eq:Hbar-set-definition}
\sethsmall \define \{ \minvar \in \costball \cap \smallhcone : \etah{ } \leq \etahat \} .
\end{equation}
\begin{enumerate}[leftmargin = *, label = \rm{(\roman*)}]
\item {\textbf{\emph{Smoothness:}}} There exists constant \mmode{\smooth > 0} (see Remark \ref{remark:smoothness-parameter}), such that the mapping \mmode{\eta : \sethsmall \longrightarrow [0, +\infty) } is \mmode{\smooth}-smooth in the sense of \eqref{eq:def-smoothness}.

\item \textbf{\emph{Strong convexity:}} In addition, if the linear operator \mmode{\linmap} is invertible, then there exists constant \mmode{\strongconv > 0 } (see Remark \ref{remark:strong-convexity-parameter}), such that the mapping \mmode{\eta : \sethsmall \longrightarrow [0, +\infty) } is \mmode{\strongconv}-strongly convex in the sense of \eqref{eq:def-strong-convexity}.
\end{enumerate}
\end{proposition}

\begin{theorem}[Smooth reformulation]
\label{theorem:smooth-reformulation}
Consider the LIP \eqref{eq:lip-main} under the setting of Assumption \ref{assumption:cost-function-main} and \ref{assumption:lip-main}, and let \mmode{\cost\opt} be its optimal value. Then the following assertions hold
\begin{enumerate}[leftmargin = *, label = \rm{(\roman*)}]
    \item Every optimal solution \mmode{\minvarf\opt} of the LIP \eqref{eq:lip-main} satisfies \mmode{e (\minvarf\opt ) < \eps^2}. 
    
    \item \textbf{\emph{The smooth problem:}} Consider any \mmode{\duet} such that \mmode{e(\minvarf\opt) \leq \epsbar^2 < \eps^2} and \mmode{ \cost\opt < \etahat}. Then the optimization problem
\begin{equation}
\label{eq:LIP-smooth-problem}
\min_{\minvar \; \in \; \sethsmall} \ \etah{} .
\end{equation}
is a smooth convex optimization problem equivalent to the LIP \eqref{eq:lip-main}. In other words, the optimal value of \eqref{eq:LIP-smooth-problem} is equal to \mmode{\cost\opt} and \mmode{\minvar\opt} is a solution to \eqref{eq:LIP-smooth-problem} if and only if \mmode{\cost\opt \minvar\opt } is an optimal solution to \eqref{eq:lip-main}.

\end{enumerate}
\end{theorem}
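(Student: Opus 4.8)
The plan is to treat the two assertions separately: part~(i) through the KKT conditions of~\eqref{eq:lip-main}, and part~(ii) around the observation --- recorded in the remark after Definition~\ref{def:etah-definition} --- that $\etah{}$ is the \emph{smallest positive scaling} that carries $\linmap(\minvar)$ into $\closednbhood{\measurement}{\eps}$.

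\emph{Part (i).} I would first note $\cost\opt>0$: since $\cost$ is convex and positively homogeneous (Assumption~\ref{assumption:cost-function-main}) with $\cost(0)=0$, a zero optimal value would make $0$ feasible, i.e.\ $\norm{\measurement}\le\eps$, contradicting Assumption~\ref{assumption:lip-main}. Strict feasibility is Slater's condition, so at any optimal $\minvarf\opt$ there are $\mu\ge 0$ and $v\in\partial\cost(\minvarf\opt)$ with $v=2\mu\,\linadj\big(\measurement-\linmap(\minvarf\opt)\big)$ and $\mu\big(\norm{\measurement-\linmap(\minvarf\opt)}^2-\eps^2\big)=0$. Positive homogeneity of $\cost$ yields Euler's identity $\inprod{v}{\minvarf\opt}=\cost(\minvarf\opt)=\cost\opt$ for every $v\in\partial\cost(\minvarf\opt)$, which already rules out $\mu=0$ (that would force $v=0$ and $\cost\opt=0$). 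Now suppose, towards a contradiction, that $e(\minvarf\opt)=\eps^2$. Since $e(\minvarf\opt)\le\norm{\measurement-\linmap(\minvarf\opt)}^2$, equality forces $\theta=1$ to minimise the convex quadratic $\theta\mapsto\norm{\measurement-\theta\linmap(\minvarf\opt)}^2$, whence $\inprod{\measurement-\linmap(\minvarf\opt)}{\linmap(\minvarf\opt)}=0$. Combining Euler's identity with the stationarity relation and the adjoint property gives $\cost\opt=\inprod{v}{\minvarf\opt}=2\mu\inprod{\measurement-\linmap(\minvarf\opt)}{\linmap(\minvarf\opt)}=0$, a contradiction; hence $e(\minvarf\opt)<\eps^2$.

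\emph{Part (ii), equivalence of values.} The elementary input here is that, for $\minvar\in\smallhcone$, rationalising~\eqref{eq:etah-definition} against $\inprod{\measurement}{\linmap(\minvar)}-\normphih{}\sqrt{\eps^2-\eh{}}$ shows that $\etah{}$ is the smaller root of $\theta\mapsto\norm{\measurement-\theta\linmap(\minvar)}^2-\eps^2$, i.e.\ the left end-point of $\{\theta\ge0:\norm{\measurement-\theta\linmap(\minvar)}\le\eps\}$. Two consequences follow. First, for any $\minvar\in\sethsmall$ the point $\etah{}\minvar$ is feasible for~\eqref{eq:lip-main} with $\cost(\etah{}\minvar)=\etah{}\cost(\minvar)\le\etah{}$, so $\etah{}\ge\cost\opt$. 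Second, for an optimal $\minvarf\opt$, a one-line scaling argument (shrinking $\minvarf\opt$ while remaining feasible would strictly lower its cost unless $\theta=1$ is already the left end-point of its scaling interval) shows $\theta=1$ is that end-point; rescaling by $1/\cost\opt$ gives $\eta(\minvarf\opt/\cost\opt)=\cost\opt$, while also $\cost(\minvarf\opt/\cost\opt)=1$, $\inprod{\measurement}{\linmap(\minvarf\opt)}>0$ (the constraint is active and $\norm{\measurement}>\eps$), and $e(\minvarf\opt/\cost\opt)=e(\minvarf\opt)\le\epsbar^2$ by part~(i) and the standing choice of $\epsbar$. Hence $\minvarf\opt/\cost\opt\in\sethsmall$ with objective value $\cost\opt<\etahat$, so $\sethsmall\neq\emptyset$ and, together with the first consequence, the optimal value of~\eqref{eq:LIP-smooth-problem} equals $\cost\opt$. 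That~\eqref{eq:LIP-smooth-problem} is a smooth convex problem is exactly Propositions~\ref{prop:eta-derivatives} and~\ref{proposition:eta-convex-regularity}.

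\emph{Part (ii), the bijection.} If $\minvar\opt$ solves~\eqref{eq:LIP-smooth-problem} then $\eta(\minvar\opt)=\cost\opt$, so $\cost\opt \minvar\opt=\eta(\minvar\opt)\minvar\opt$ is feasible for~\eqref{eq:lip-main} with cost $\cost\opt\cost(\minvar\opt)\le\cost\opt$, hence optimal. Conversely, let $\cost\opt \minvar\opt$ be optimal for~\eqref{eq:lip-main}; I would apply part~(i) to it to get $e(\cost\opt \minvar\opt)<\eps^2$, and then argue that $\linmap$ is constant on the optimal set --- midpoints of optimal solutions are optimal (convexity of $\cost$ and of the feasible set), every optimal solution lands on the sphere $\partial\closednbhood{\measurement}{\eps}$ (active constraint), and distinct points of that sphere have midpoint strictly inside $\closednbhood{\measurement}{\eps}$. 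Therefore $e(\cost\opt \minvar\opt)=e(\minvarf\opt)\le\epsbar^2$, and the argument used above for $\minvarf\opt/\cost\opt$ places $\minvar\opt$ in $\sethsmall$ with $\eta(\minvar\opt)=\cost\opt$, i.e.\ $\minvar\opt$ solves~\eqref{eq:LIP-smooth-problem}. I expect the delicate point to be precisely this last step: $\epsbar$ is fixed relative to one optimal solution, and one must check it accommodates \emph{every} optimal solution --- which is what the ``$\linmap$ constant on the optimal set'' observation secures, and which in turn relies on part~(i) to exclude a tangential optimum.
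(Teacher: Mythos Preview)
Your proof is correct and takes a genuinely different route from the paper's.

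The paper proves (i) by invoking the min--max reformulation~\eqref{eq:JMLR-lip-min-max} and appealing to results from \cite{Sheriff2020NovelProblems} to produce a saddle point $(\minvar\opt,\maxvar\opt)$; existence of the maximiser $\maxvar\opt$ then forces $\minvar\opt\in\interior(\hcone)$ via Proposition~\ref{proposition:max-problem-solution}(ii), whence $e(\minvarf\opt)<\eps^2$. For (ii), the paper first establishes the intermediate Lemma~\ref{lemma:non-smooth-reformulation} (the \emph{non-smooth} reformulation over $\costball\cap\hcone$, again via \cite{Sheriff2020NovelProblems}) and then argues the argmin of that problem already sits inside $\sethsmall$.

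Your argument is more elementary and fully self-contained: part~(i) via KKT and Euler's identity for positively homogeneous functions, part~(ii) directly from the ``smallest positive scaling'' interpretation of $\eta$ without passing through the min--max layer or the external reference. This buys independence from \cite{Sheriff2020NovelProblems} at the cost of not reusing machinery the paper develops anyway for other purposes. A small bonus of your route is that you explicitly confront the question of whether $\epsbar$, chosen relative to \emph{one} optimal $\minvarf\opt$, accommodates \emph{every} optimal solution --- your observation that $\linmap$ is constant on the optimal set (active constraint plus strict convexity of the Euclidean ball) settles this cleanly, whereas the paper's proof leaves that step implicit. One cosmetic point: your closing remark that this last step ``relies on part~(i)'' is not quite accurate --- the $\linmap$-constancy argument needs only the active-constraint observation, which you derive independently from the scaling argument, not from part~(i).
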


\begin{remark}[Choosing \mmode{\epsbar}]\rm{
Consider the problem of recovering some true signal \mmode{\minvarf\opt} from its noisy linear measurements \mmode{\measurement = \linmap (\minvarf\opt) + \noise}, where \mmode{\noise} is some additive measurement noise. Then, \mmode{\eps} is chosen in \eqref{eq:lip-main} such that the probability \mmode{\P{\norm{\noise} \leq \eps}} is very high. Then for \emph{any} \mmode{\epsbar \in (0, \eps)} we have \mmode{e (\minvarf\opt) < \epsbar^2 } with probability at least \mmode{\P{ \norm{\noise} \leq \eps} \cdot \P{ \abs{ \inprod{\noise}{\frac{\linmap (\minvarf\opt)}{\norm{\linmap (\minvarf\opt)}}} }^2 > \eps^2 - \epsbar^2 } }. Thus, in practise, one could select \mmode{\epsbar} to be just smaller than \mmode{\eps} based on available noise statistics. For empirical evidence, we have demonstrated in Section \ref{subsection:epsbar-numerical-evidence} by plotting the histogram of \mmode{e(\minvarf\opt)} for 28561 instances of LIPs arising in a single image denoising problem solved with a fixed value of \mmode{\eps}. It is evident from Figure \eqref{fig:ef_epsilonplot} that there is a strict separation between the value \mmode{\eps^2}, and the maximum value of \mmode{e(\minvarf\opt)} among different LIPs.
\label{remark:chosing-epsbar}}
\end{remark}

\begin{remark}[Choosing the upper bound \mmode{\etahat}] \rm{
Since \mmode{\etahat} is any upper bound to the optimal value \mmode{\cost\opt} of the LIP \eqref{eq:lip-main}, and equivalently \eqref{eq:LIP-smooth-problem}, a simple candidate is to use \mmode{\etahat = \etah{}} for any feasible \mmode{\minvar}. In particular, for \mmode{\minvar_0 = \big( \nicefrac{1}{\cost (\minvarf')} \big) \minvarf' }, where \mmode{ \minvarf' \define \argmin\limits_{\minvarf } \ \norm{\measurement - \linmap (\minvarf)}^2 } is the solution to the least squares problem, it can be shown that
\begin{equation}
\label{eq:etahat-candidate}
\etahat \define \etah{0} = \cost (\minvarf') \left( 1 - \sqrt{1 - \frac{\normx^2 - \eps^2}{\norm{\measurement '}^2}} \right) \quad \text{where } \measurement' = \linmap (\minvarf') .
\end{equation}
We would like to emphasise that the value of \mmode{\etahat} is required only to conclude smoothness of \eqref{eq:LIP-smooth-problem} and the corresponding smoothness constants. It is \emph{not necessary} for the implementation of the proposed algorithm FLIPS. The inequality \mmode{\etah{} \leq \etahat} in the constraint \mmode{\minvar \in \sethsmall} is ensured for all iterates of FLIPS as it generates a sequence of iterates such that \mmode{\eta} is monotonically decreasing.
}\label{remark:chosing-etahat}
\end{remark}

\begin{remark}[Smoothness parameter]\rm{Let \mmode{\maxeig (\linadj \circ \linmap)} be the maximum eigenvalue of the linear operator \mmode{\big( \linadj \circ \linmap \big)}, then for any \mmode{\etahat \geq \cost\opt}, and \mmode{\epsbar > 0} such that \mmode{e(\minvarf\opt) \leq \epsbar^2 < \eps^2}, consider
\begin{equation}
\label{eq:smoothness-parameter}
\smooth \duet \define \frac{\eps^2 \etahat^3 }{ \big( \epsminusebar \big)^{\nicefrac{3}{2}} } \frac{ \big( \normx + \eps \big) }{\big( \normx - \eps \big)^2 } \; \maxeig (\linadj \circ \linmap) .
\end{equation}
Then the mapping \mmode{\eta : \sethsmall \longrightarrow [0, +\infty)} is \mmode{\smooth \duet}-smooth in the sense of \eqref{eq:def-smoothness}.}\label{remark:smoothness-parameter}\end{remark}

\begin{remark}[Strong convexity parameter]\rm{Let \mmode{\mineig (\linadj \circ \linmap)} be the minimum eigenvalue of the linear operator \mmode{\big( \linadj \circ \linmap \big)}, then for any \mmode{\etabar \in (0, \cost\opt]}, consider the constant
\begin{equation}
\label{eq:strong-convexity-parameter}
\strongconv (\etabar) = \frac{2 {\etabar}^3 }{ \xminuseps } \; \mineig \big( \linadj \circ \linmap \big) .
\end{equation}
Then the mapping \mmode{\eta : \sethsmall \longrightarrow [0, +\infty)} is \mmode{\strongconv (\etabar)}-strongly convex in the sense of \eqref{eq:def-strong-convexity}. Since \mmode{\cost\opt} is not known a priori, we need a valid lower bound \mmode{\etabar} that is easy to compute from the problem parameters \mmode{x, \eps}, and \mmode{\linmap}, which we provide in the following remark.
}\label{remark:strong-convexity-parameter}
\end{remark}

\begin{remark}[Choosing the lower bound \mmode{\etabar}]
\label{remark:lower-bound-etabar}\rm{
Let \mmode{ \cost' (\minvarf) \define \max\limits_{\minvar \in \costball} \ \inprod{\minvarf}{\minvar} } denote the dual function of \mmode{\cost}. Then the quantity 
\begin{equation}
\label{eq:etabar-candidate}
\etabar = \frac{\normx \big( \normx - \eps \big) }{\cost' \big( \linadj (\measurement) \big)},
\end{equation}
is a positive lower bound to the optimal value \mmode{\cost\opt} of the LIP \eqref{eq:lip-main}.\footnote{To see why \mmode{\etabar} is a valid lower bound, consider \eqref{eq:JMLR-lip-min-max}. We observe that \mmode{\scaling \measurement \in \setlambda } for all \mmode{\scaling > 0}, then interchanging the order of min-max in \eqref{eq:JMLR-lip-min-max}, we get
\[
\cost\opt \; = \; \max_{\maxvar \in \setlambda} \, \left\{ 2 \sqterm \, - \, \cost' (\linadj (\maxvar)) \right\} \; \geq \; \max_{\scaling > 0} \, \left\{ 2 \sqrt{\scaling} \sqrt{\normx^2 - \eps \normx} \; - \; \scaling \cost' (\linadj (\measurement)) \right\} \; = \; \etabar.
\]
} } \end{remark}

\begin{remark}[Applying accelerated gradient descent for \eqref{eq:LIP-smooth-problem}]
\label{remark:APGD-algorithm}\rm{
Reformulating the LIP \eqref{eq:lip-main} as the smooth minimization problem \eqref{eq:LIP-smooth-problem} allows us to apply accelerated gradient descent methods \citep{Ye.E.Nesterov1983AO1/k2, Beck2009AProblems}, to improve the theoretical convergence rate from \mmode{O(\nicefrac{1}{k})} to \mmode{O(\nicefrac{1}{k^2})}. We know that by applying the projected accelerated gradient descent algorithm \citep{Beck2009AProblems} for \eqref{eq:LIP-smooth-problem}
\begin{equation}
\label{eq:PAGD}
\begin{cases}
\begin{aligned}
z_k & = \Pi_{\sethsmall} \Big( \minvar_k - (\nicefrac{1}{b}) \nabla \eta (\minvar_k) \Big) \\
t_{k + 1} & = \frac{1+\sqrt{1+4t_k^2}}{2} \\
\minvar_{k + 1} &= z_k + \frac{t_k - 1}{t_{k = 1}} (z_k - z_{k - 1}), 
\end{aligned}
\end{cases}
\end{equation}
the sub-optimality \mmode{\eta(\minvar_k) - \cost\opt} diminishes at a rate of \mmode{O(\nicefrac{1}{k^2})}, which is an improvement over the existing best rate of \mmode{O(\nicefrac{1}{k})} for the CP algorithm. Moreover, if the linear map \mmode{\linmap} is invertible, then since the objective function \mmode{\eta (\cost)} is strongly convex, the iterates in \eqref{eq:PAGD} (or even simple projected gradient descent) converge exponentially (with a slightly different step-size rule).

One of the challenges in implementing the algorithm \eqref{eq:PAGD} is that it might not be possible in general, to compute the orthogonal projections onto the set \mmode{\sethsmall}. However, if somehow the inequality \mmode{e(\minvar_k) \leq \epsbar^2} is ensured always along the iterates, then the problem of projection onto the set \mmode{\sethsmall} simply reduces to projecting onto the set \mmode{\costball}, which is relatively much easier. In practice, this can be achieved by selecting \mmode{\epsbar} such that \mmode{e(\minvarf\opt) < \epsbar^2 < \eps^2}, and a very small step-size (\mmode{\nicefrac{1}{b}}) so that the iterates \mmode{(\minvar_k)_k} do not violate the inequality \mmode{e(\minvar_k) < \epsbar^2}. In our observation, empirically, one can tune the value of \mmode{b} for a given LIP so that the criterion: \mmode{e(\minvar_k) < \epsbar^2} is always satisfied. However, if one has to solve a number of LIPs for various values of \mmode{\measurement} via algorithm \eqref{eq:PAGD}; tuning the value(s) of \mmode{b} could be challenging and tedious. Thus, applying off-the-shelf accelerated methods directly to the smooth reformulation \eqref{eq:LIP-smooth-problem} might not be the best choice in practice. This is one of the reasons we propose a different algorithm (FLIPS) that is tailored to solve the smooth reformulation by maximally exploiting the structure of the problem.
}\end{remark}


\subsection{Equivalent min-max problem with strong-convexity}
\label{subsection:strongly-convex-min-max-LIP}

Consider the LIP \eqref{eq:lip-main} under the setting of Assumptions \ref{assumption:cost-function-main} and \ref{assumption:lip-main}, let \mmode{\cost\opt} be its optimal value and \mmode{\minvarf\opt} be an optimal solution. Consider any \mmode{\etabar, \etahat, \epsbar > 0} such that \mmode{\etabar \leq \cost\opt < \etahat} and \mmode{ e(\minvarf\opt) \leq \epsbar^2 < \eps^2 } (to choose values of \mmode{ \epsbar, \etahat, \etabar}, see Remarks \ref{remark:chosing-epsbar}, \ref{remark:chosing-etahat}, and \ref{remark:lower-bound-etabar}   respectively). Denoting \mmode{\sqfunc \define \sqterm}, we define the constant \mmode{\boundlambda > 0 } and the set \mmode{\setlambdasmall \subset \R{\measdim}} by
\begin{equation}
\label{eq:set-lambda-bar}
\begin{cases}
\begin{aligned}
\boundlambda & \define \frac{\eps \etahat^2}{\xminuseps \sqrt{\epsminusebar} } , \\
\setlambdasmall & \define \big\{ \maxvar \in \R{\measdim} : \; \sqfunc \geq \etabar \text{ and } \normlambda \leq \boundlambda \big\}  .
\end{aligned}
\end{cases}
\end{equation}

\begin{lemma}[Min-max reformulation with strong convexity]
\label{lemma:ACP-min-max}
Consider the LIP \eqref{eq:lip-main} under the setting of Assumptions \ref{assumption:cost-function-main} and \ref{assumption:lip-main}, and let \mmode{\cost\opt} denote its optimal value.
Then the min-max problem 
\begin{equation}
\label{eq:ACP-min-max-problem}
\begin{cases}
\min\limits_{\minvar \in \costball} \; \sup\limits_{\maxvar \in \setlambdasmall }  \quad \lagrangian \; = \; 2 \sqfunc \; - \; \inprod{\maxvar}{\linmap (\minvar)} ,
\end{cases}
\end{equation}
is equivalent to the LIP \eqref{eq:lip-main}. In other words, a pair \mmode{(\minvar\opt , \maxvar\opt) \in \costball \times \setlambdasmall} is a saddle point of \eqref{eq:ACP-min-max-problem} if and only if \mmode{\cost\opt \minvar\opt} is an optimal solution to the LIP \eqref{eq:lip-main}, and
\[
\maxvar\opt = \frac{\cost\opt}{\norm{\linmap (\minvar\opt)} \sqrt{\eps^2 - e(\minvar\opt)} } \big( \measurement - \cost\opt \linmap (\minvar\opt)  \big) .
\]
\end{lemma}

The min-max problem \eqref{eq:ACP-min-max-problem} falls into the interesting class of convex-concave min-max problems with a bi-linear coupling between the minimizing variable \mmode{\minvar} and the maximizing variable \mmode{\maxvar}. Incorporating the constraints \mmode{\minvar \in \costball} and \mmode{\maxvar \in \setlambdasmall} with indicator functions, the min-max problem writes
\[
\begin{cases}
\min\limits_{\minvar \in \hilbert} \; \max\limits_{\maxvar \in \R{\measdim} }  \quad \indicator_{\costball} (\minvar) \; - \; \inprod{\maxvar}{\linmap (\minvar)} \; - \; \big( \indicator_{\setlambdasmall} (\maxvar) - 2 \sqfunc \big) .
\end{cases}
\]
If the constraint sets \mmode{\costball} and \mmode{\setlambdasmall} are projection friendly, the min-max problem \eqref{lemma:ACP-min-max} can be solved by directly applying the Chambolle-Pock (CP) primal-dual algorithm. Without any further assumptions, the duality gap of min-max problem \eqref{eq:ACP-min-max-problem} converges at a rate of \mmode{O(\nicefrac{1}{k})} for the Chambolle-Pock algorithm. This rate of convergence is currently the best, and same as the one when CP is applied directly to the min-max problem \eqref{eq:CP_minmax} discussed in the introduction. However, in addition, if the mapping \mmode{\setlambdasmall \ni \maxvar \longmapsto -2\sqfunc } is smooth and strongly convex, acceleration techniques can be incorporated into the Chambolle-Pock algorithm. One of the contribution of this article towards this direction is to precisely establish that indeed this mapping is smooth and strongly concave under the setting of Assumption \ref{assumption:lip-main}. in which case, the rate of convergence improves from \mmode{O(\nicefrac{1}{k})} to \mmode{O(\nicefrac{1}{k^2})}.

\begin{lemma}[Convex regularity in min-max reformulation]
\label{lemma:convex-regularity-of-dual-function}
Consider \mmode{\measurement \in \R{\measdim}} and \mmode{\eps > 0} such that \mmode{\normx > \eps }. For any \mmode{\etabar, \etahat, \epsbar > 0} such that \mmode{\etabar \leq \cost\opt < \etahat} and  \mmode{ e (\minvarf\opt) \leq \epsbar^2 < \eps^2 }; let \mmode{\boundlambda > 0} be as given in \eqref{eq:set-lambda-bar}, and let \mmode{\strongconv' , \smooth' } be constants given by
\begin{equation}
\label{eq:convex-regularity-constants-of-dual-functions}
\strongconv' \define \frac{\eps}{\xminuseps} \left( \frac{\etabar}{\boundlambda} \right)^3 \quad \text{ and } \quad \smooth' \define \frac{\xminuseps}{2 {\etabar}^3} .
\end{equation}
Then the mapping \mmode{ \setlambdasmall \ni \maxvar \longmapsto -2 \sqfunc } is \mmode{\strongconv'}-strongly convex and \mmode{\smooth'}-smooth.
\end{lemma}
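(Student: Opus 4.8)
The plan is to reduce both bounds to a two-dimensional eigenvalue computation for the Hessian. Write $s(\maxvar) \define \inprod{\maxvar}{\measurement} - \eps\normlambda$, so that $\sqfunc = \sqrt{s(\maxvar)}$ and the map in question is $g : \maxvar \longmapsto -2\sqfunc = -2\sqrt{s(\maxvar)}$. The function $s$ is concave (a linear map minus a norm) and satisfies $s(\maxvar) \ge \etabar^2 > 0$ on $\setlambdasmall$; since moreover $\setlambdasmall = \setlambda \cap \{\maxvar : s(\maxvar) \ge \etabar^2\} \cap \{\normlambda \le \boundlambda\}$ is convex and $g$ is $C^2$ on a neighbourhood of it (as $s>0$ and $\maxvar\neq 0$ there), it suffices to establish the pointwise bound $\strongconv'\,\identity{\measdim} \preceq \gradient^2 g(\maxvar) \preceq \smooth'\,\identity{\measdim}$ for every $\maxvar \in \setlambdasmall$.

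First I would compute $\gradient g = -s^{-1/2}\gradient s$ and
\[
\gradient^2 g(\maxvar) \;=\; s^{-1/2}\big(-\gradient^2 s(\maxvar)\big) \;+\; \tfrac12\, s^{-3/2}\, \gradient s(\maxvar)\,\gradient s(\maxvar)\transp ,
\]
where, writing $u \define \maxvar/\normlambda$, one has $\gradient s = \measurement - \eps u$ and $-\gradient^2 s = \tfrac{\eps}{\normlambda}\big(\identity{\measdim} - uu\transp\big) \succeq 0$; in particular both summands are positive semidefinite, which re-proves convexity of $g$. Introducing $p \define \inprod{u}{\measurement}$, $a \define p - \eps$ and $q^2 \define \normx^2 - p^2 \ge 0$, one records the identities valid on $\setlambda$: $p > \eps$ hence $a > 0$; $s = \normlambda a$; $\norm{\gradient s}^2 = a^2 + q^2$; $\inprod{u}{\gradient s} = a$; and $\xminuseps = a^2 + 2\eps a + q^2$, so $a^2 < \xminuseps$. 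Factoring $\tfrac{\eps}{\normlambda\sqrt s}$ out and simplifying with $s = \normlambda a$ gives
\[
\gradient^2 g(\maxvar) \;=\; \frac{\eps}{\normlambda\sqrt s}\,\Big[\big(\identity{\measdim} - uu\transp\big) + c\,\hat g\hat g\transp\Big], \qquad \hat g \define \tfrac{\gradient s}{\norm{\gradient s}}, \quad c \define \frac{a^2+q^2}{2\eps a},
\]
together with $\inprod{u}{\hat g}^2 = \tfrac{a^2}{a^2+q^2}$.

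The core step is then to bound the eigenvalues of $M \define (\identity{\measdim} - uu\transp) + c\,\hat g\hat g\transp$. Since $M$ acts as the identity on $\big(\Span\{u,\hat g\}\big)^\perp$ and leaves $\mathcal P \define \Span\{u,\hat g\}$ invariant, its extreme eigenvalues are governed by the restriction $M|_{\mathcal P}$ to a space of dimension at most two, on which $\trace(M|_{\mathcal P}) = 1 + c$ and $\det(M|_{\mathcal P}) = c\,\inprod{u}{\hat g}^2$ (the latter when $\dim\mathcal P = 2$; the degenerate case $q=0$, $\hat g = \pm u$, is checked directly). Hence $\lambda_{\max}(M) \le 1+c$, and since $M \succeq 0$, $\lambda_{\min}(M|_{\mathcal P}) = \det(M|_{\mathcal P})/\lambda_{\max}(M|_{\mathcal P}) \ge \det(M|_{\mathcal P})/\trace(M|_{\mathcal P}) = \tfrac{c\,\inprod{u}{\hat g}^2}{1+c}$. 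Substituting the values of $c$ and $\inprod{u}{\hat g}^2$, these become $1+c = \tfrac{\xminuseps}{2\eps a}$ and $\tfrac{c\,\inprod{u}{\hat g}^2}{1+c} = \tfrac{a^2}{\xminuseps} \le 1$, so that $\lambda_{\min}(M) \ge \tfrac{a^2}{\xminuseps}$ as well. Feeding this back into the displayed expression for $\gradient^2 g$ and using $s = \normlambda a$ once more yields
\[
\frac{\eps}{\xminuseps}\cdot\frac{s^{3/2}}{\normlambda^3}\,\identity{\measdim} \;\preceq\; \gradient^2 g(\maxvar) \;\preceq\; \frac{\xminuseps}{2\, s^{3/2}}\,\identity{\measdim} .
\]
Finally, on $\setlambdasmall$ we have $s^{3/2} = \sqfunc^3 \ge \etabar^3$ and $\normlambda \le \boundlambda$, which turn the two sides into $\strongconv'$ and $\smooth'$ exactly as in \eqref{eq:convex-regularity-constants-of-dual-functions}.

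The main obstacle is that the obvious estimate, bounding $\gradient^2 g$ by the sum of the operator norms of its two positive-semidefinite summands, is too lossy to recover the sharp constant $\smooth' = \tfrac{\xminuseps}{2\etabar^3}$ (it yields instead a constant proportional to $\normx^2 + 2\eps\normx - 3\eps^2 > \normx^2 - \eps^2$); one must exploit the relative geometry of the two rank-deficient terms, which is precisely what the factorisation through $M$ and the trace/determinant estimates on the two-dimensional space $\mathcal P$ accomplish. The routine parts are the derivative computation and the elementary identities linking $s$, $a$, $q$, $\normlambda$ and $\xminuseps$.
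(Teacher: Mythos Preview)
Your argument is correct and follows the same core strategy as the paper: reduce the Hessian analysis to the two-dimensional invariant subspace spanned by $\maxvar$ and $\measurement$, bound the extreme eigenvalues there, and then invoke $\sqfunc \geq \etabar$ and $\normlambda \leq \boundlambda$ on $\setlambdasmall$. The only presentational difference is that the paper (via a separate lemma) computes the two nontrivial eigenvalues of $\hessian$ exactly by the quadratic formula and then bounds them using $\sqrt{1-\theta^2}\le 1$ and $\sqrt{1-\theta^2}\le 1-\theta^2/2$, whereas you bypass the explicit roots with the equivalent trace/determinant estimates $\lambda_{\max}\le\trace$ and $\lambda_{\min}\ge\det/\trace$; both routes produce the identical pointwise bounds $\tfrac{\eps}{\xminuseps}\,\tfrac{(\sqfunc)^3}{\normlambda^3}$ and $\tfrac{\xminuseps}{2(\sqfunc)^3}$.
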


\noindent \textbf{The Accelerated Chambolle-Pock algorithm.} In view of the Lemmas \ref{lemma:ACP-min-max} and \ref{lemma:convex-regularity-of-dual-function}, the min-max problem \eqref{eq:ACP-min-max-problem} admits an accelerated version of the Chambolle-Pock algorithm \citep[Algorithm 4, (30)]{Chambolle2016OnAlgorithm}. Denoting \mmode{\Pi_{\costball}} and \mmode{\Pi_{\setlambdasmall}} to be the projection operators onto the sets \mmode{\costball} and \mmode{\setlambdasmall} respectively, and \mmode{\big( \strongconv' , \smooth' \big) = \big( \strongconv' \duetlambda , \smooth' (\etabar) \big)} for simplicity, the accelerated CP algorithm for \eqref{eq:ACP-min-max-problem} is
\begin{equation}
\label{eq:ACP-algo}
\begin{cases}
\begin{aligned}
\minvar_{k + 1} & = \; \Pi_{\costball} \Big( \minvar_k \; + \; s_k \linadj \big( \maxvar_k + \theta_k (\maxvar_k - \maxvar_{n - 1}) \big) \Big) \\
\maxvar_{k + 1} & = \; \Pi_{\setlambdasmall} \left( \maxvar_k \; + \; \frac{t_k}{l(\maxvar_k)} \Big( \measurement - (\nicefrac{\eps}{\norm{\maxvar_k}) \maxvar_k } + l(\maxvar_k) \linmap (\minvar_{k+1}) \Big) \right),
\end{aligned}
\end{cases}
\end{equation}
where, \mmode{(t_k , s_k, \theta_k)} are positive real numbers satisfying
\begin{equation}
\label{eq:ACP-step-size-conditions}
\theta_{k+1} = \frac{1}{\sqrt{ 1 + \strongconv' t_k }}, \quad t_{k+1} = \frac{t_k}{\sqrt{ 1 + \strongconv' t_k }}, \quad \text{and} \quad s_{k+1} = s_k {\sqrt{ 1 + \strongconv' t_k }}, \quad \text{for } n \geq 0,
\end{equation}
with \mmode{\theta_0 = 1, \ t_0 = \frac{1}{2\smooth'}}, and \mmode{ s_0 = \frac{\smooth'}{\pnorm{\linmap}{o}^2} }.

\begin{remark}[Ergodic \mmode{O(\nicefrac{1}{k^2})} rate of convergence]
\label{remark:ACP-rate}\rm{
Consider the min-max problem \eqref{eq:ACP-min-max-problem}, and let \mmode{(\minvar_k , \maxvar_k)} for \mmode{n = 1,2,\ldots,} be the sequence generated by the Accelerated CP algorithm \eqref{eq:ACP-algo}. Then there exists a constant \mmode{C > 0} such that
\[
\max_{\maxvar \in \setlambdasmall} \ L(\minvar_k , \maxvar)  \; - \; \min_{\minvar \in \costball} \ L(\minvar , \maxvar_k) \ \leq \ \frac{C}{k^2} \quad \text{for all } n \geq 1.
\] 
The remark is an immediate consequence of Lemma \ref{lemma:convex-regularity-of-dual-function} and \citep[Theorem 4 and Lemma 2]{Chambolle2016OnAlgorithm}. }
\end{remark}

\begin{remark}[Projection onto the set \mmode{\setlambdasmall}]
\label{remark:projection-lambda-set}\rm{
In general, computing projections onto the set \mmode{\setlambdasmall} is non-trivial, and in principle, requires a sub problem to be solved at each iteration. However, since the duality gap along the iterates \mmode{(\minvar_k , \maxvar_k)} generated by \eqref{eq:ACP-algo} converges to zero; it follows that \mmode{\cost\opt = \lim\limits_{k \longrightarrow + \infty} l(\maxvar_k) }. By selecting \mmode{\etabar < \cost\opt}, computing projections onto the set \mmode{\setlambdasmall} becomes trivial for all but finitely many iterates in the sequence \mmode{(\maxvar_k)_k}. To see this, observe that the set \mmode{\setlambdasmall} is the intersection of two convex sets \mmode{\{ \maxvar : \normlambda \leq \boundlambda \}} and \mmode{\{ \maxvar : \sqfunc \geq \etabar \}}. Since \mmode{\cost\opt = \lim\limits_{k \longrightarrow + \infty} l(\maxvar_k) }, the inequality \mmode{l (\maxvar_k) \geq \etabar} is readily satisfied for all but finitely many iterates if \mmode{\etabar < \cost\opt}. Consequently, all but finitely many iterates in the sequence \mmode{(\maxvar_k)_k} are contained in the set \mmode{\{ \maxvar : l(\maxvar) \geq \etabar \}}. Therefore, computing projections onto the set \mmode{\setlambdasmall} eventually reduces to projecting onto the set \mmode{\{ \maxvar : \normlambda \leq \boundlambda \}}, which is trivial.
}\end{remark}

\section{The Fast LIP Solver (FLIPS)}\label{chap:algorithm}

Even though the newly proposed smooth reformulation of \eqref{eq:lip-main} is amenable to acceleration based schemes; it could suffer in practice from conservative estimation of smoothness constant. Moreover, since one has to ensure that \mmode{\minvar \in \hcone} for all the iterates, it further constrains the maximum step-size that could be taken, which results in slower convergence in practice. These issues make applying off-the shelf methods to solve the proposed smooth problem not fully appealing. To overcome this, we propose the Fast LIP Solver (FLIPS), presented in Algorithm \ref{algo:FLIPS}. 

\begin{algorithm}
\caption{The Fast LIP Solver}
\label{algo:FLIPS}
\KwIn{Measurement \mmode{\measurement}, linear operator \mmode{\phi}, \mmode{\eps > 0}, and oracle parameters.}
\KwOut{Sparse representation \mmode{\minvarf\opt}} 

\emph{Initialise}: \mmode{h = \big( \nicefrac{1}{\cost(\minvarf')} \big) \minvarf' }, where \mmode{ \minvarf' = \phi \backslash x \define \argmin_{\minvarf} \norm{\measurement - \linmap (\minvarf)} }.

\emph{Check for strict feasibility (Assumption \ref{assumption:lip-main})}

\emph{Iterate till convergence}

\nl \quad \textbf{Compute \mmode{\etah{}} and \mmode{\etagrad{}}}

\nl \quad \textbf{Check for stopping criteria (for small enough \mmode{\delta \sim 0.01})}
\begin{equation}
\label{eq:stopping-criterion}
     \text{Stopping criterion :} \quad \frac{ \inprod{\etagrad{}}{\minvar} }{ \min\limits_{g \in \costball} \inprod{\etagrad{}}{g} } \geq \ 1 + \delta .
\end{equation}

\nl \quad \textbf{Compute the update direction \(\oraclemin\) using any viable oracle}

\nl \quad \textbf{Exact line search:} Compute
\[
\stepsize(\minvar) 
=  \begin{cases}
    \begin{aligned}
    &\argmin_{\stepsize \in [0,1]} &&\eta \big( \minvar + \stepsize (\oraclemin - \minvar )\big)\\
    &\text{subject to}  && \minvar + \stepsize (\oraclemin - \minvar ) \; \in \; \smallhcone
    \end{aligned}
    \end{cases}
\]

\nl {\bf Update} : \mmode{\minvar^+ = \minvar + \stepsizeh{} \big( \oraclemin - \minvar \big)}

{\em Repeat}

\nl \textbf{Output:} the sparse representation \mmode{\minvarf\opt = \eta(\minvar) \minvar}.

\end{algorithm}

\noindent In a nutshell, FLIPS uses two oracle calls in each iteration; one each to compute an update direction \mmode{\oraclemin} and the step-size \mmode{\stepsizeh{}}. It then updates the current iterate by taking the convex combination \mmode{\minvar^+ = \minvar + \stepsize(\minvar) \big( \oraclemin - \minvar \big) } controlled by \mmode{\stepsizeh{}}. This is repeated until a convergence criterion is met.

\begin{remark}[Initialization and checking feasibility]
\label{remark:initialization}\rm{
The algorithm is initialised with a normalized solution of the least squares problem: \mmode{ \argmin_{\minvarf} \norm{\measurement - \linmap (\minvarf)} }, which is written as \mmode{\phi \backslash \measurement} following the convention used in Matlab. If the LIP is ill-posed, the least squares problem will have infinitely many solutions. Even though the algorithm works with any initialization among the solutions to the least squares problem, it is recommended to use the minimum \mmode{\ell_2}-norm solution \mmode{\minvarf' = \linmap^{\dagger} \measurement }. Since \mmode{\linmap (\minvarf')} is closest point (w.r.t. the \mmode{\norm{\cdot}} used in the least squares problem), it is easily verified that the LIP satisfies the strict feasibility condition in Assumption \ref{assumption:lip-main} if and only if the inequality \mmode{\norm{\measurement - \linmap (\minvarf')} < \eps} holds.
}\end{remark}

\begin{remark}[Constraint splitting and successive feasibility]
\label{remark:constraint-splitting}
\rm{
The novelty of FLIPS is that it combines ideas from canonical gradient descent methods to compute the update direction, but takes a step in spirit similar to that of the Frank-Wolfe algorithm. This allows us to 
perform a sort of constraint splitting and handle different constraints in \eqref{eq:LIP-smooth-problem} separately. To elaborate, recall that the feasible set in \eqref{eq:LIP-smooth-problem} is
\[
\sethsmall = \{ \minvar \in \costball \cap \smallhcone : \etah{} \leq \etahat \} .
\]
On the one hand, since \mmode{\costball} is a convex set, for a given \mmode{\minvar \in \costball}, the direction oracle guarantees that \mmode{\minvar^+ \in \costball} by producing \mmode{\oraclemin \in \costball} at every iteration. On the other hand, selection of \mmode{\stepsizeh{}} in the exact line search oracle ensures that \mmode{\eta (\minvar^+) \leq \etah{} \leq \etahat} and \mmode{ \minvar^+ \in \smallhcone}. Thus, \mmode{\minvar^+ \in \sethsmall }, and 
}\end{remark}

\subsection{Descent direction oracle}
For any given \mmode{\minvar \in \sethsmall} (in principle, for any \mmode{\minvar \in \interior \hcone}), the direction oracle simply computes another point \mmode{\oraclemin \in \costball} such that the function \mmode{\eta (\cdot)} could be potentially minimized along the direction \mmode{\oraclemin - \minvar}. To find \mmode{\oraclemin}, a sub-problem is solved at every iteration of the algorithm. Thus, by varying the complexity of these sub-problems, gives rise to different direction oracles. In addition, the output of a descent direction oracle also provides access to quantities that can be used to define the stopping criteria for FLIPS. In the following, we briefly describe some standard descent direction oracles that could be used in FLIPS along with the corresponding stopping criteria for them.

\begin{enumerate}[leftmargin = *, label = \rm{(\alph*)}]
\item \textbf{Linear Oracle (LO):} For any \mmode{\minvar \in \interior \hcone}, the Linear oracle computes the direction \mmode{\oraclemin}   by solving a linear optimization problem over \mmode{\costball} 
\begin{equation}
\label{eq:linear-oracle}
    \text{LO: } \quad
    \oraclemin \in 
                \begin{cases}
                \begin{aligned}
                & \ \argmin_{\oraclevar \in \costball} && \inprod{\gradient \eta (\minvar)}{\oraclevar}.
                \end{aligned}
                \end{cases}
\end{equation}

Finding \mmode{\oraclemin} via a linear oracle makes the corresponding implementation of FLIPS very similar to the Frank-Wolfe (FW) algorithm \citep{Frank1956AnProgramming, Jaggi2013RevisitingOptimization}, but with constraint splitting as discussed in remark \ref{remark:constraint-splitting}. The only difference between the FW-algorithm and FLIPS is that the linear sub-problem \eqref{eq:linear-oracle} is solved over the set \mmode{\costball} in FLIPS and not over the actual feasible set \mmode{\sethsmall} as we would in the FW-algorithm.

\begin{example}[LO for sparse coding]\rm{
              For the sparse coding problem, i.e., LIP \eqref{eq:lip-main} with \(\cost(\rep)=\pnorm{f}{1}\), the corresponding linear oracle \eqref{eq:linear-oracle} is easily described due to the Hölder inequality \citep{Yang1991}. The \mmode{i}-th component \mmode{g_i (\minvar)} of the direction \mmode{\oraclemin} is given by
             \begin{equation}
             \label{eq:linear-oracle2}
             \begin{aligned}
             g_i (\minvar) = 
             \begin{cases}
                -\sgn \big( \nicefrac{\partial \eta}{\partial \minvar_i} \big) , &\text{ if } \abs{ \nicefrac{\partial \eta}{\partial \minvar_i} } = \|\gradient \eta (\minvar)\|_{\infty}, \\ 
                 0, &\text{ if } \abs{ \nicefrac{\partial \eta}{\partial \minvar_i} } \neq\|\gradient \eta (\minvar)\|_{\infty} .
            \end{cases}
            \end{aligned}
            \end{equation}
}\end{example}

\item {\bf Simple Quadratic Oracle (SQO)}:  For any \mmode{\minvar \in \interior \hcone}, the SQO computes the direction \mmode{\oraclemin} by solving a quadratic optimization problem over \mmode{\costball}.
\begin{equation}
\label{eq:SQO}
    \text{SQO: } \quad
    \oraclemin = \projection_{\costball} \Big( \minvar - ( \nicefrac{1}{\smooth} ) \gradient \eta (\minvar) \Big)
\end{equation}
where \mmode{\projection_{\costball}} is the projection operator onto the set \mmode{\costball}. Thus, an SQO is essentially a composition of taking a gradient descent step with a step-size of \mmode{\nicefrac{1}{\smooth}} and projecting back to the set \mmode{\costball}. The parameter \mmode{\smooth} is a hyper parameter of the SQO, which is usually taken as the inverse of smoothness constant in canonical gradient descent schemes. We emphasise here that FLIPS does not jump from \mmode{h} to \mmode{\oraclemin} right away as in projected gradient descent algorithm, but rather takes a convex combination of these points. This allows us to chose \mmode{\smooth} much smaller than the actual smoothness constant.

\item {\bf Accelerated Quadratic Oracle (AQO)}: Adding momentum/acceleration in gradient descent schemes tremendously improves their convergence speeds, both in theory and practice \citep{Ye.E.Nesterov1983AO1/k2}. Taking inspiration from such ideas, we consider the AQO as
\begin{equation}
\label{eq:AQO}
\text{AQO: } \quad
\begin{cases}
\begin{aligned}
\oraclevar(\minvar, \direction) &= \projection_{\costball} \Big( \minvar - (\nicefrac{1}{\smooth}) \big( \gradient \eta (\minvar) + \accelparam \direction \big) \Big) \\
\direction (\minvar, \direction) &= \oraclevar (\minvar, \direction) - \minvar
\end{aligned}
\end{cases}
\end{equation}
The extra iterate \mmode{\direction} carries the information of the momentum/past update, and the parameter \mmode{\accelparam} controls the weight of the momentum, which is an additional hyper parameter of the AQO.
\end{enumerate}

Solving the quadratic problems \eqref{eq:SQO} and \eqref{eq:AQO} require more computational resources than the linear one \eqref{eq:linear-oracle}. Consequently, the complexity of implementing a quadratic oracle is more than that of the linear oracle. Since, solving the quadratic problem \eqref{eq:SQO} reduces to computing orthogonal projections of points onto the set \mmode{\costball}, a practical assumption in implementing a quadratic oracle is that the set \mmode{\costball} is projection friendly, which is indeed the case whenever the corresponding cost function \mmode{\cost(\cdot)} is {\em Prox-friendly}. For some LIPs like the matrix completion problem, solving the corresponding projection problem requires computing the SVD at every iteration, which could be challenging for large scale problems. However, for other relevant cost functions like the \mmode{\ell_1}, \mmode{\ell_{\infty}}-norms, the corresponding projection problem requires projection onto the corresponding spheres which is easy to implement, for e.g., \citep{Duchi2008EfficientShalev-Shwartz}.

\subsection{Step size selection via exact line search}
Once the direction \mmode{\oraclemin} is computed using a viable oracle, FLIPS updates the iterate \mmode{\minvar} by taking a convex combination: \mmode{\minvar + \stepsize (\oraclemin - \minvar)} in spirit similar to that of Frank-Wolfe algorithm. We select the step-size \mmode{\stepsize \in [0,1]} via exact line search, i.e., by solving the problem
\begin{equation}
\label{eq:gammaopt_unconstrained}
\stepsize(\minvar) =
    \argmin_{\stepsize \in [0,1]} \quad \eta \big( \minvar + \stepsize (\oraclemin - \minvar) \big) .
\end{equation}
Luckily, the reformulation of the LIP as \eqref{eq:LIP-smooth-problem} with new objective function \mmode{\eta} allows us to compute the explicit solution  of \eqref{eq:gammaopt_unconstrained} without any noticeable increase in the computational demand. The following proposition characterises the optimal step-size in the exact line search for a generic direction \mmode{\direction} instead on specific \mmode{\oraclemin - \minvar} as in \eqref{eq:gammaopt_unconstrained}.

\begin{proposition}\label{proposition:gamma_opt}
For any \mmode{\minvar \in \hcone}, and \mmode{\direction \in \hilbert}, then consider the optimization problem
\begin{equation}
\label{eq:step-size-selection-proposition}
\stepsize\opt = \argmin_{\stepsize \in [0,1]} \quad \eta (\minvar + \stepsize \direction) .
\end{equation}
Then exactly one of the following assertions hold
\begin{enumerate}[leftmargin = *, label = \rm{(\arabic*)}]
    \item If \mmode{ \inprod{\measurement}{\linmap (\direction)} \leq \etah{} \inprod{\phih{}}{\linmap (\direction)} }, then \mmode{\stepsize\opt = 0}.

    \item If \mmode{\minvar + \direction \in \hcone} and \mmode{  \inprod{\measurement}{\linmap (\direction)} \geq \eta (\minvar + \direction) \inprod{\linmap (\minvar + \direction)}{\linmap (\direction)} }, then \mmode{\stepsize\opt = 1}.

    \item Otherwise, \mmode{\stepsize\opt \in (0, 1)} is the root of the quadratic equation \mmode{a \stepsize^2 + 2b \stepsize + c = 0},\footnote{
\begin{equation}
\label{eq:rsu-definitions}
\begin{aligned}     a&=
                    \norm{\phi(\direction)}^2( \ed{} - \eps^2 )\\
                    b&= 
                    (\norm{\measurement}^2-\eps^2)\inprod{\phih}{\phi(\direction)} - \inprod{\measurement}{\phih{}} \inprod{\measurement}{\phi(\direction)} \\
                    c&=
                    \frac{(\norm{\measurement}^2 -\eps^2) \inprod{ \linmap(\minvar)}{\linmap(\direction)}^2 }{ \norm{\linmap(\direction)}^2 }
                    - \frac{ 2\inprod{ \measurement}{ \linmap(\direction) } \inprod{ \measurement}{ \linmap(\minvar) } \inprod{ \linmap(\minvar)}{\linmap(\direction)} }{ \norm{\linmap(\direction)}^2 }
                    + \frac{ \norm{\linmap(\minvar)}^2\inprod{ \measurement}{ \linmap(\direction)}^2}{\norm{\linmap(\direction)}^2} .
                \end{aligned}
                \end{equation} }
                that also satisfies
    \[
    \frac{ \inprod{\measurement}{ \linmap (\minvar + \stepsize\opt \direction) } }{ \xminuseps } \leq \frac{ \inprod{\linmap (\minvar + \stepsize\opt \direction) }{\linmap \direction} }{ \inprod{\measurement}{\linmap (\direction)} } .
    \]
\end{enumerate}
\end{proposition}

\begin{remark}[Qualitative convergence of FLIPS]
\label{remark:qualitative-convergence-of-FLIPS}
We would like to provide qualitative convergence of FLIPS with a `Simple Quadratic Oracle (SQO)'. Consider the sequence of iterates \mmode{(\minvar_k)_k}, generated form FLIPS with an SQO, then the following arguments apply

\begin{enumerate}[label = \rm{(\roman*)}]
    \item the sequence of the values of the cost function \mmode{(\eta(\minvar_k))_k} is non-increasing for FLIPS with any oracle, and in particular, with an SQO

    \item the mapping \mmode{h_t \longmapsto \big( g(h_k) , \gamma (h_k) \big)}, is continuous for all \mmode{t}

    \item the equality \mmode{h = g(h)} holds if and only if \mmode{h = h\opt}
\end{enumerate}
Claim (iii) is non-trivial but follows directly from the first order optimality conditions, we omit this proof for the sake of brevity. Putting the three arguments (i) - (iii) together, and considering the Lyapunov function \mmode{V(h) \define \eta(h) - \eta(h^*)}, we conclude from the Lyapunov theorem that the iterations \mmode{ (h_k)_k} converge to \mmode{h^*}.
\end{remark}

\begin{remark}[Techniques for speeding up implementation of FLIPS]
\label{remark:speeding-up-FLIPS} \rm{
We discuss some techniques and tricks to improve the practical implementation of FLIPS
\begin{enumerate}[leftmargin = *]
\item The inner-product terms \mmode{\ipxphih{} , \inprod{\measurement}{\linmap (\direction)}} must be computed as \mmode{ \inprod{\linadj (\measurement)}{\minvar} } and \mmode{ \inprod{\linadj (\measurement)}{\direction} }. This way, we do not compute (and store) the vectors \mmode{ \phih{} } and \mmode{\linmap (\direction)} at each iteration, but rather compute the vector \mmode{\linadj (\measurement)} once.

\item To compute \mmode{\etagrad{}}, we need to compute \mmode{\linadj (\maxvarh{})}. This can be alternatively done by keeping a running iterate of \mmode{(\linadj o \linmap) h}, which is updated at each iteration as \mmode{ (\linadj o \linmap) h_{t+1} = (\linadj o \linmap) h_t + \stepsize_t \big( \linadj o \linmap (g_t) - (\linadj o \linmap) h_t \big) }.

\end{enumerate}
}\end{remark}

\section{Numerical Results}\label{chap:numericalsimulations}
We test FLIPS on a few well-known LIPs namely {\em Compressed Sensing, Binary-Selection problem}, and finally we test FLIPS on the classical {\em Image Denoising} problem. Moreover, since the image processing problems are the most common LIPs and many good solvers already exist, we compare FLIPS with existing state of the art methods for constrained LIPs arising in image processing tasks.

All experiments in Sections \ref{subsection-compressed-sensing}, \ref{subsection:binary-selection}, and \ref{subsection:image-denoising} were run on a laptop with Apple M1 processor with 8GB RAM using MATLAB 2021b. While, the experiments in Section \ref{subsection:FLIPS-vs-FISTA} comparing FLIPS with FISTA were run on a laptop with Apple M1 max processor with 32GB RAM using Python. The open-source code can be found on the author's Github page \citep{FLIPSpackage}.

\subsection{Results on the Binary-selection problem}
\label{subsection:binary-selection}
In this example, we aim to reconstruct a vector \mmode{\minvarf_{tr} \in \R{K}} whose entries are \mmode{\pm 1} from its linear measurements. Without loss of generality, we consider \mmode{f_{tr} (i) = +1} for \mmode{i = 1,2,\ldots,0.5K}, and \mmode{f_{tr} (i) = -1} for \mmode{i > 0.5K}, and then collect \mmode{m \sim 0.55K} linear measurements \mmode{x \in \R{m}} (\mmode{55\%} of the information). For each \mmode{i = 1,2,\ldots,m}, the measurement \mmode{x_i} is obtained as \mmode{x_i = \linmap_i\transp f_{tr} + w_i}, where \mmode{\linmap_i \in \R{K}} is generated randomly by sampling each entry of \mmode{\linmap_i} uniformly over the interval \mmode{[-0.5, 0.5]}. The measurement noise \mmode{w_i \sim \mathcal{N} (0, \sigma)}  with \mmode{\sigma = 0.0125}, is generated by sampling randomly and independently from everything else. 

Following, \cite{chandrasekaran2012convex}, the problem of recovering \mmode{f_{tr}} from \mmode{\measurement, \linmap} is formulated as the LIP
\begin{equation}
\label{eq:binary-selection}
\begin{cases}
\begin{aligned}
& \min_{\minvarf} && \pnorm{f}{\infty} \\
& \sbjto && \norm{x - \linmap \minvarf} \leq \eps .
\end{aligned}
\end{cases}
\end{equation}
We chose the value of \mmode{\eps = 10\sigma\sqrt{m}}. Since we know that the entries can be either \mmode{+1} or \mmode{-1}, it allows us to select a slightly larger value of \mmode{\eps} than necessary, which has an indirect benefit in improving the regularity of the problem and consequently faster convergence.

We consider three different instances of \eqref{eq:binary-selection} for \mmode{K = 500, 1000}, and \mmode{5000}. We apply FLIPS for each of the problems using an AQO oracle with parameters \mmode{\smooth } and \mmode{\rho} tuned for faster convergence. The performance of FLIPS for \mmode{K = 3000} iterations is shown in Figure \ref{fig:binary-selection} following the theme of Figure \ref{fig:compressed-sensing}. The first row plots the true signal \mmode{f_{tr}} and the recovered signal \mmode{f^* = \eta(\minvar_T) \minvar_T}. Following these to the bottom we have the plots for the {\em sub-optimality}, {\em Distance to true solution}, and the sequence of {\em step-sizes} as a function of iterations of FLIPS.

\begin{figure}
\centering
\includegraphics[width = 0.325\textwidth]{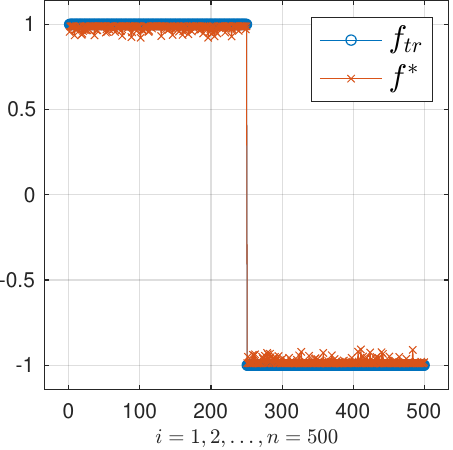} \hfill
\includegraphics[width = 0.325\textwidth]{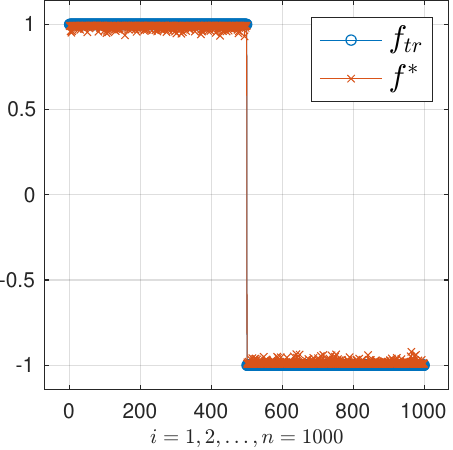} \hfill
\includegraphics[width = 0.325\textwidth]{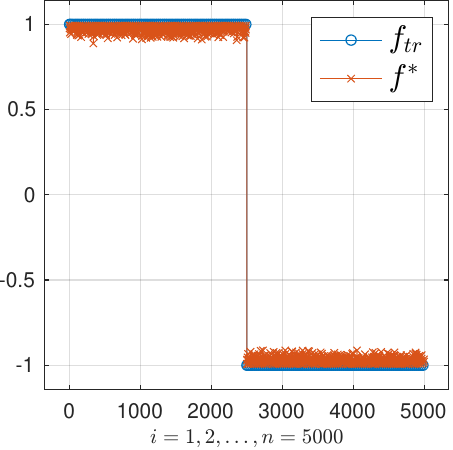} 

\includegraphics[width = 0.325\textwidth]{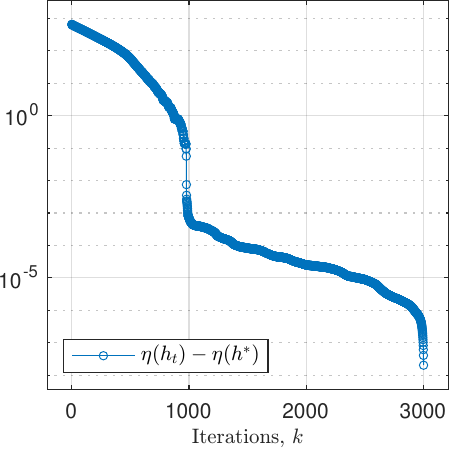} \hfill
\includegraphics[width = 0.325\textwidth]{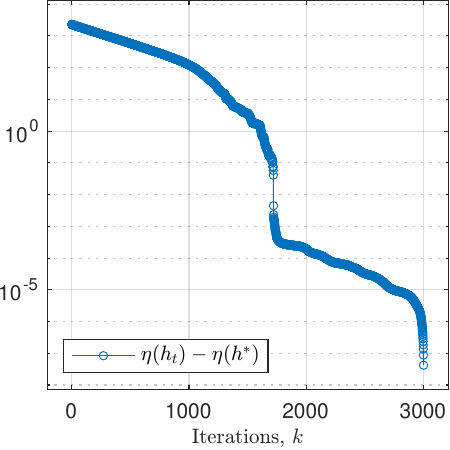} \hfill
\includegraphics[width = 0.325\textwidth]{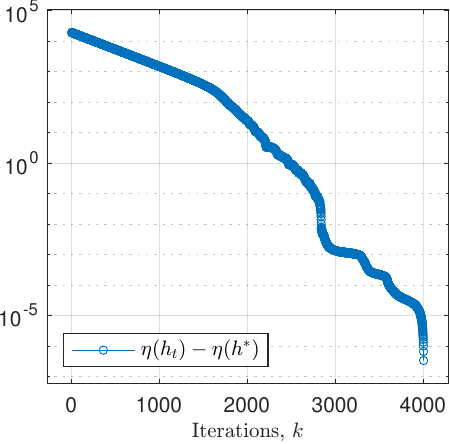} 

\includegraphics[width = 0.325\textwidth]{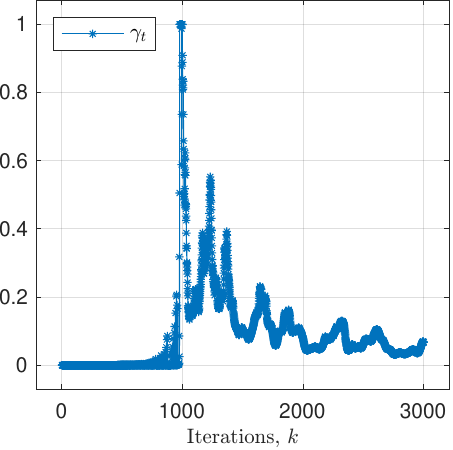} \hfill
\includegraphics[width = 0.325\textwidth]{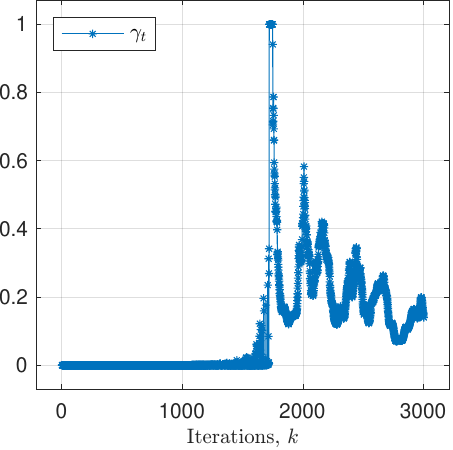} \hfill
\includegraphics[width = 0.325\textwidth]{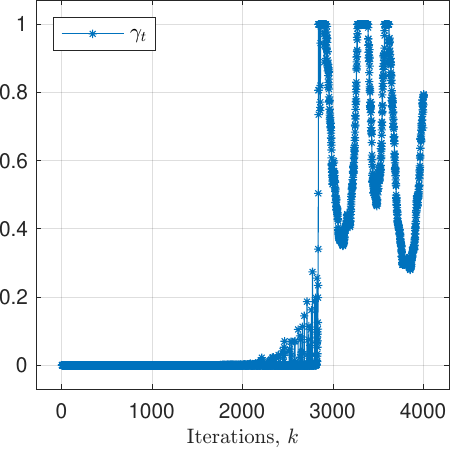}

\includegraphics[width = 0.325\textwidth]{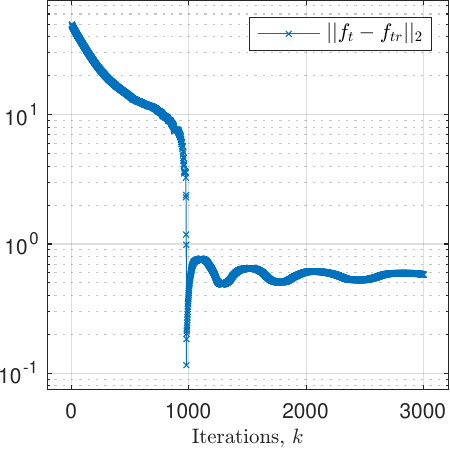} \hfill
\includegraphics[width = 0.325\textwidth]{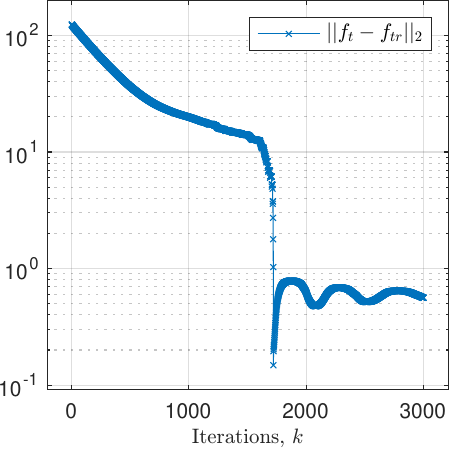} \hfill
\includegraphics[width = 0.325\textwidth]{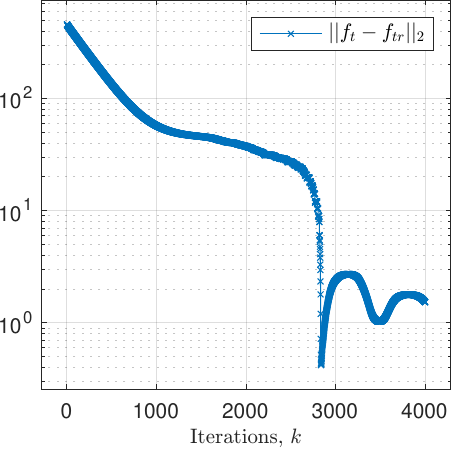}

\caption{Simulation results for Binary selection.}
\label{fig:binary-selection}
\end{figure}

\subsection{Results on Compressed Sensing}
\label{subsection-compressed-sensing}
For an image `I', let \mmode{\mathsf{I} \in \R{K}} be its vectorized form. Then for \mmode{i = 1,2,\ldots, m \sim 0.6 \times K}, we collect the linear measurement \mmode{x_i}, of the image `I' as \mmode{x_i = c_i\transp {\mathsf{I}} + w_i}; where \mmode{c_i \in \R{K}} is a random vector whose each entry is drawn uniformly from \mmode{[-0.5, 0.5]}, and \mmode{w_i} is the measurement noise drawn randomly from a Gaussian distribution with variance \mmode{\sigma^2 = 0.0055}, and independently from everything else. The task of recovering the image `I' from its measurements \mmode{x} is formulated as the LIP
\begin{equation}
\label{eq:compressed-sensing}
\begin{cases}
\begin{aligned}
& \min_{\minvarf} && \pnorm{f}{1} \\
& \sbjto && \norm{x - (CD)f} \leq \eps ,
\end{aligned}
\end{cases}
\end{equation}
where \mmode{\eps = \sigma \sqrt{m}} and \mmode{D \in \R{K \times K}} is chosen to be the dictionary of 2d-IDCT basis vectors since natural images are sparse in 2d-DCT basis. Thus, \eqref{eq:compressed-sensing} is a version of the LIP \eqref{eq:lip-main} with objective function \mmode{\cost(\cdot) = \pnorm{\cdot}{1}} and parameters \mmode{x , \linmap = CD , \eps}. We apply FLIPS with an AQO to find an optimal solution \mmode{f\opt} to the LIP \eqref{eq:compressed-sensing}, and the image `I' is reconstructed as \mmode{D f\opt}. 

\begin{figure}
\centering
\includegraphics[width = 0.275\textwidth]{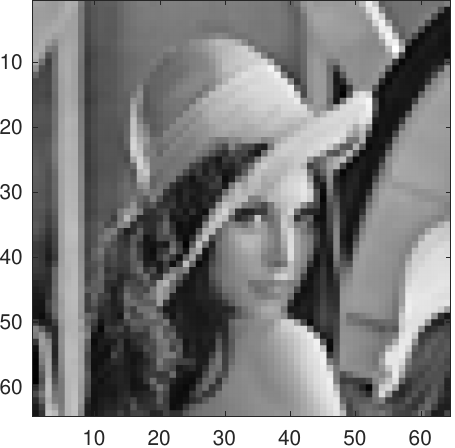} \hfill
\includegraphics[width = 0.275\textwidth]{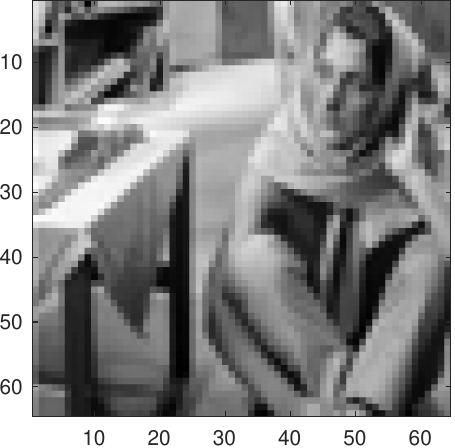} \hfill
\includegraphics[width = 0.275\textwidth]{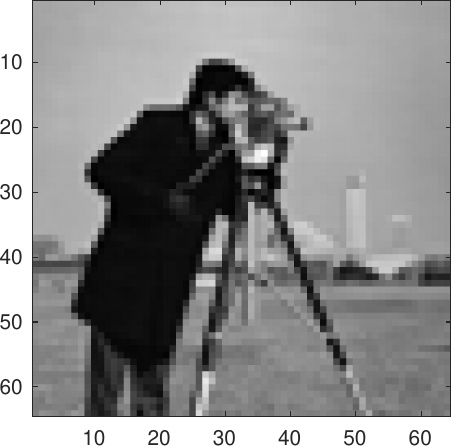}

\includegraphics[width = 0.275\textwidth]{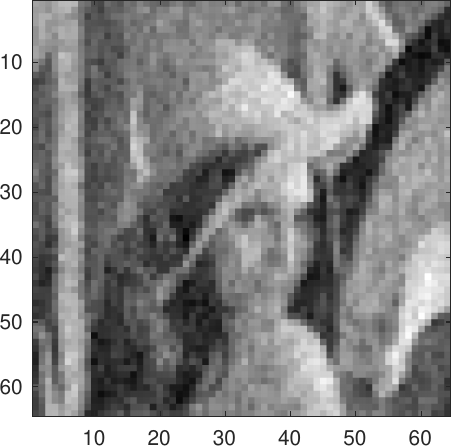} \hfill
\includegraphics[width = 0.275\textwidth]{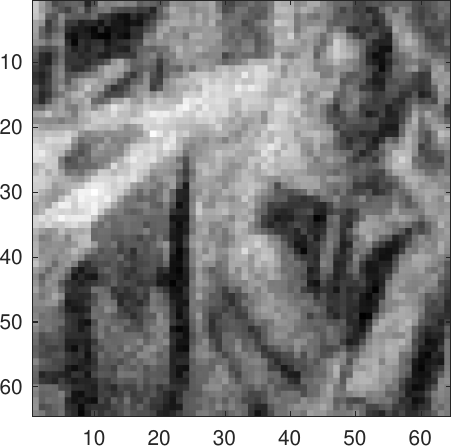} \hfill
\includegraphics[width = 0.275\textwidth]{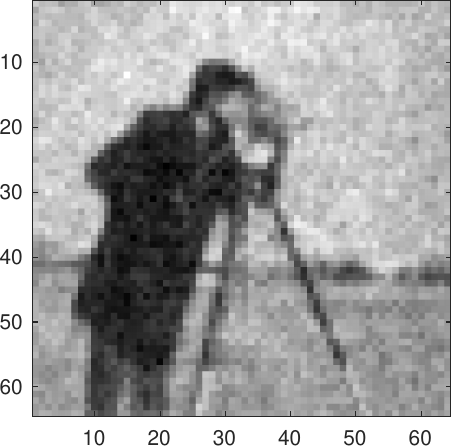}

\includegraphics[width = 0.275\textwidth]{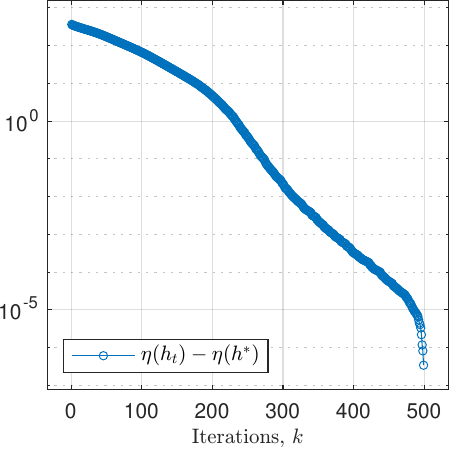} \hfill
\includegraphics[width = 0.275\textwidth]{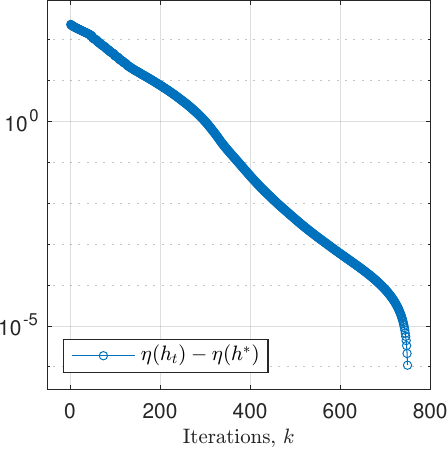} \hfill
\includegraphics[width = 0.275\textwidth]{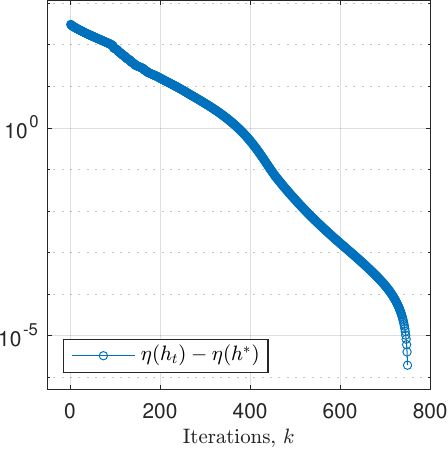} 

\includegraphics[width = 0.275\textwidth]{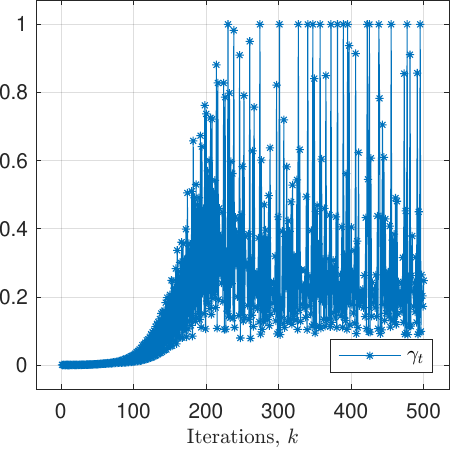} \hfill
\includegraphics[width = 0.275\textwidth]{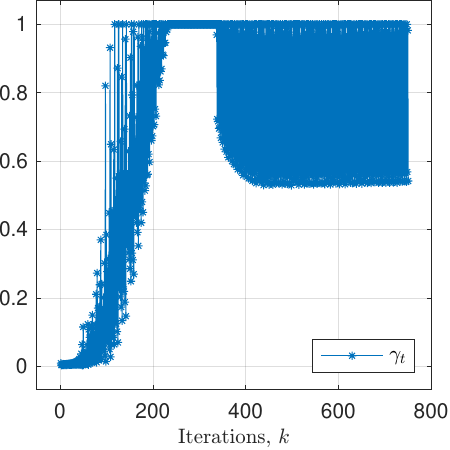} \hfill
\includegraphics[width = 0.275\textwidth]{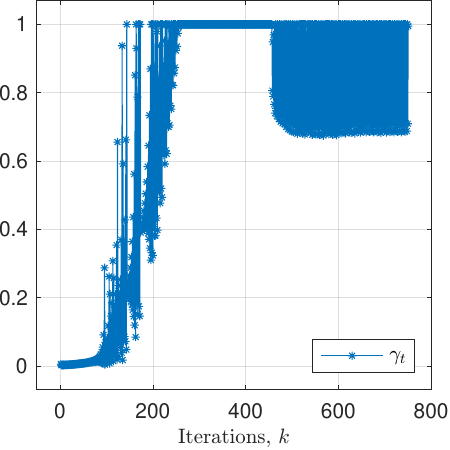}

\includegraphics[width = 0.275\textwidth]{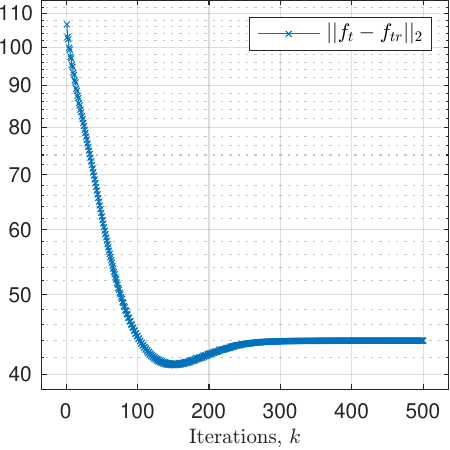} \hfill
\includegraphics[width = 0.275\textwidth]{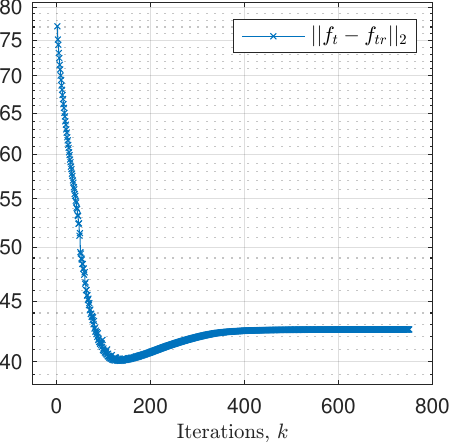} \hfill
\includegraphics[width = 0.275\textwidth]{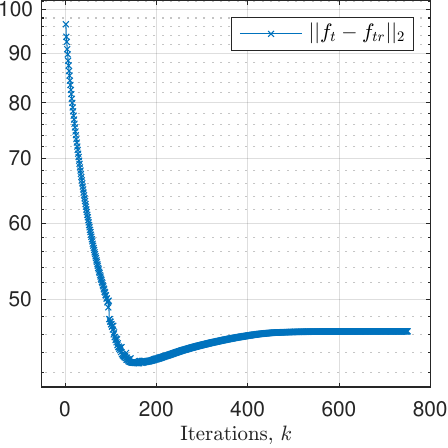}

\caption{Simulation results for Compressed sensing.}
\label{fig:compressed-sensing}
\end{figure}

We consider the Compressed Sensing problem \eqref{eq:compressed-sensing} for the standard images of `Lena', `Cameraman', and `Barbara', each of size \mmode{K = 64 \times 64} pixels. We solve each one of them using FLIPS using an AQO oracle with parameter values tuned for faster convergence. The results on recovery and convergence attributes of FLIPS are shown in Figure \ref{eq:compressed-sensing}. The left column corresponds to the results for the `Lena' image, followed by `Barbara', and `Cameraman' images to their right. The true images are shown in the first row and the recovered images from FLIPS are shown in the second row. Following these, the plots for the {\em sub-optimality}: \mmode{\etah{t} - \eta (\minvar \opt)}, {\em Distance to true solution}: \mmode{\pnorm{\minvarf_t - \mathsf{I}}{2}} is shown where \mmode{\minvarf_t = \etah{t} \minvar_t}. Finally, at the bottom, we plot the sequence of {\em step-sizes}: \mmode{\stepsize_t} as a function of iterations of FLIPS.

\subsection{Results on image denoising}
\label{subsection:image-denoising}
Finally, we also consider another image processing task of denoising an image to demonstrate the performance of FLIPS and compare it with other state-of-the-art methods that denoise an image by solving the corresponding constrained LIP \eqref{eq:lip-main}. In particular, we consider the Chambolle-Pock algorithm (with the current best theoretical convergence guarantee of \mmode{O(\nicefrac{1}{k})}) \citep{Chambolle2016OnAlgorithm, Chambolle2010AImaging}, and also the more well known C-SALSA algorithm \citep{Afonso2009AnProblems}.

\begin{table}[h]
\caption{Comparison of FLIPS with CP and C-SALSA algorithms, for image denoising with sliding patches on the `Lena', `Barbara', and `Cameraman' images.}
\label{tab:denoising-comparison-cpu-times}
\begin{tabular}{|ccccccc|}
\hline
\multicolumn{1}{|c|}{}        & \multicolumn{2}{c|}{\textbf{FLIPS}}                             & \multicolumn{2}{c|}{\textbf{CP}}                                & \multicolumn{2}{c|}{\textbf{C-SALSA}}                           \\ \hline
\multicolumn{1}{|c|}{}        & \multicolumn{1}{|c|}{CPU time} & \multicolumn{1}{|c|}{iterations} & \multicolumn{1}{|c|}{CPU time} & \multicolumn{1}{|c|}{iterations} & \multicolumn{1}{|c|}{CPU time} & \multicolumn{1}{|c|}{iterations} \\ \hline
\multicolumn{7}{|c|}{\textit{\textbf{Results for ``Lena''}}}                                                                                                                                                                            \\ \hline
\multicolumn{1}{|c|}{4 x 4}   & \multicolumn{1}{c|}{\cellcolor{blue!10} 4.95}     & \multicolumn{1}{c|}{5.188}      & \multicolumn{1}{c|}{14.54}    & \multicolumn{1}{c|}{45.18}      & \multicolumn{1}{c|}{16.9}     & 43.61                           \\ \hline
\multicolumn{1}{|c|}{8 x 8}   & \multicolumn{1}{c|}{\cellcolor{blue!10} 5.6}      & \multicolumn{1}{c|}{4.9}        & \multicolumn{1}{c|}{17.7}     & \multicolumn{1}{c|}{45.73}      & \multicolumn{1}{c|}{14.27}    & 34.33                           \\ \hline
\multicolumn{1}{|c|}{16 x 16} & \multicolumn{1}{c|}{\cellcolor{blue!10} 16}     & \multicolumn{1}{c|}{3}        & \multicolumn{1}{c|}{33.85}    & \multicolumn{1}{c|}{47.6}       & \multicolumn{1}{c|}{27.8}     & 44.5                           \\ \hline
\multicolumn{1}{|c|}{32 x 32} & \multicolumn{1}{c|}{\cellcolor{blue!10} 27.23}    & \multicolumn{1}{c|}{2.6}        & \multicolumn{1}{c|}{70.23}     & \multicolumn{1}{c|}{49}         & \multicolumn{1}{c|}{43.2}     & 43.2                            \\ \hline

\multicolumn{7}{|c|}{\textit{\textbf{Results for ``Barbara''}}}                                                                                                                                                                         \\ \hline
\multicolumn{1}{|c|}{4 x 4}   & \multicolumn{1}{c|}{\cellcolor{blue!10} 5.46}     & \multicolumn{1}{c|}{5.37}       & \multicolumn{1}{c|}{15.1}     & \multicolumn{1}{c|}{46.2}       & \multicolumn{1}{c|}{15.95}    & 44.1                            \\ \hline
\multicolumn{1}{|c|}{8 x 8}   & \multicolumn{1}{c|}{\cellcolor{blue!10} 5.74}     & \multicolumn{1}{c|}{5.08}       & \multicolumn{1}{c|}{18.13}    & \multicolumn{1}{c|}{47.66}      & \multicolumn{1}{c|}{14.5}     & 36.3                            \\ \hline
\multicolumn{1}{|c|}{16 x 16} & \multicolumn{1}{c|}{\cellcolor{blue!10} 16}     & \multicolumn{1}{c|}{3}       & \multicolumn{1}{c|}{33.67}       & \multicolumn{1}{c|}{49.1}     & \multicolumn{1}{c|}{27.58}       & 47.5                              \\ \hline
\multicolumn{1}{|c|}{32 x 32} & \multicolumn{1}{c|}{\cellcolor{blue!10} 27.6}    & \multicolumn{1}{c|}{2.6}          & \multicolumn{1}{c|}{71.5}    & \multicolumn{1}{c|}{49.7}       & \multicolumn{1}{c|}{ 44.5}     & 43                              \\ \hline

\multicolumn{7}{|c|}{\textit{\textbf{Results for ``Cameraman''}}}                    \\ \hline
\multicolumn{1}{|c|}{4 x 4}   & \multicolumn{1}{c|}{\cellcolor{blue!10} 5.3}      & \multicolumn{1}{c|}{5.33}       & \multicolumn{1}{c|}{13.73}    & \multicolumn{1}{c|}{44.4}       & \multicolumn{1}{c|}{16.28}    & 43.9                            \\ \hline
\multicolumn{1}{|c|}{8 x 8}   & \multicolumn{1}{c|}{\cellcolor{blue!10} 5.92}     & \multicolumn{1}{c|}{5.313}      & \multicolumn{1}{c|}{17.93}    & \multicolumn{1}{c|}{43.5}       & \multicolumn{1}{c|}{12.63}    & 31.18                           \\ \hline
\multicolumn{1}{|c|}{16 x 16} & \multicolumn{1}{c|}{\cellcolor{blue!10} 15.9}     & \multicolumn{1}{c|}{3}          & \multicolumn{1}{c|}{30.45}    & \multicolumn{1}{c|}{41.75}         & \multicolumn{1}{c|}{25.09}    & 38.7                              \\ \hline
\multicolumn{1}{|c|}{32 x 32} & \multicolumn{1}{|c|}{\cellcolor{blue!10} 28.4}         & \multicolumn{1}{|c|}{2.88}           & \multicolumn{1}{|c|}{66.78}         & \multicolumn{1}{|c|}{46.88}           & \multicolumn{1}{|c|}{42.96}         & \multicolumn{1}{|c|}{42.69}           \\ \hline
\end{tabular}
\end{table}

We consider the three images: `Lena', `Barbara', and `Cameraman', each of size \mmode{256 \times 256}. On each of those images, a Gaussian noise of variance $\sigma^2 = 0.0055$ is added with the MATLAB function \texttt{imnoise}. The image is then denoised by denoising every patch (and overlapping) of a fixed size \mmode{m \times m}. Then the final image is reconstructed by taking the average value of an individual pixel over all the patches it belongs to. Denoising a given patch of size \mmode{m \times m} corresponds to solving the LIP \eqref{eq:lip-main} with \mmode{\cost(\rep)=\pnorm{\rep}{1}}, the linear map \mmode{\linmap} as the 2d-inverse discrete cosine transform for \mmode{m \times m} patches (computed using the function \texttt{idct2} in Matlab), and \mmode{\eps = \sigma m}. The experiment is repeated with different patch sizes of \mmode{m = 4, 8, 16, 32, 64}, and for each image and patch size \mmode{m}, the convergence results are averaged over 10 independent trials with independent noise. The parameter values in the Chambolle-Pock algorithm, C-SALSA, and the AQO oracle parameters in FLIPS are tuned to get the best results for each patch size. 

In Table \ref{tab:denoising-comparison-cpu-times}, we tabulate the average number of iterations per patch required until convergence for each algorithm, averaged over different patches of the respective image and different instances of noise. Besides the average number of iterations, we also tabulate the total CPU time each algorithm takes to solve the denoising problem for all the patches, and averaged over different instances of noise. The convergence criterion for each patch \mmode{\mathsf{I}} is chosen to be \mmode{\min \big\{ k : \norm{\mathsf{I}_k - \mathsf{I}^* }_2 \leq 10^{-3} \norm{\mathsf{I}^*}_2 \big\} }, where \mmode{\mathsf{I}^*} is the optimal solution for the respective patch computed apriori by running FLIPS for a large number of iterations. Of course, the time required to compute \mmode{ \norm{\mathsf{I}_k - \mathsf{I^*} }_2 } at each iteration is excluded from the CPU times.

\begin{figure}
\centering
\includegraphics[width = 0.3\textwidth]{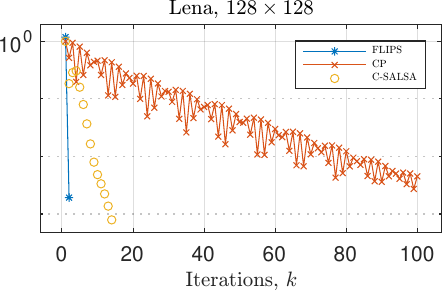} \hfill
\includegraphics[width = 0.3\textwidth]{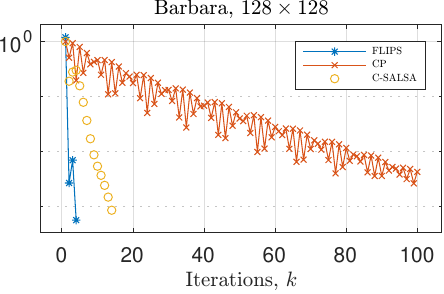} \hfill
\includegraphics[width = 0.3\textwidth]{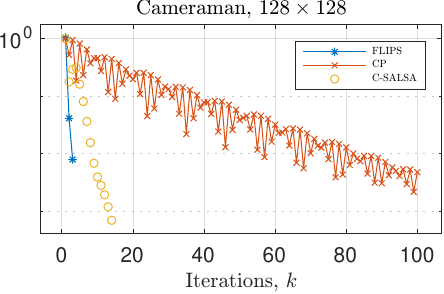}

\vspace{0.5cm}
\includegraphics[width = 0.3\textwidth]{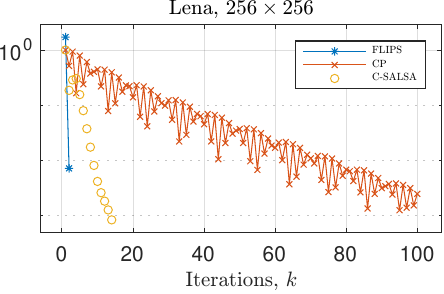} \hfill
\includegraphics[width = 0.3\textwidth]{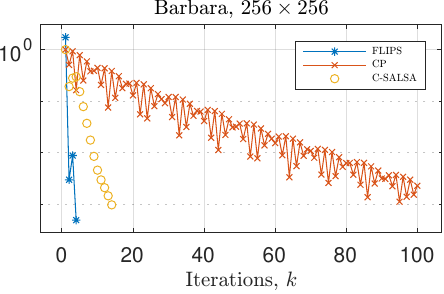} \hfill
\includegraphics[width = 0.3\textwidth]{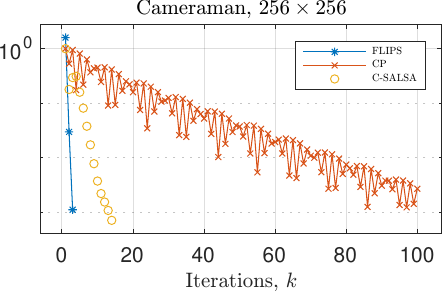}

\vspace{0.5cm}
\includegraphics[width = 0.3\textwidth]{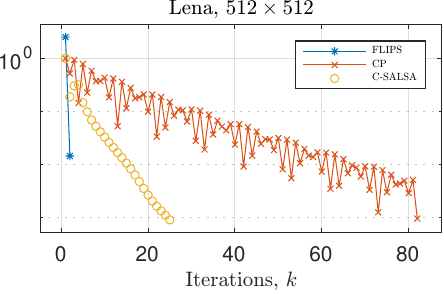} \hfill
\includegraphics[width = 0.3\textwidth]{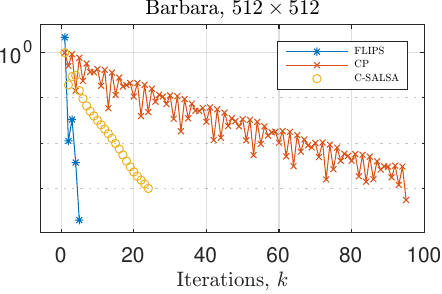} \hfill
\includegraphics[width = 0.3\textwidth]{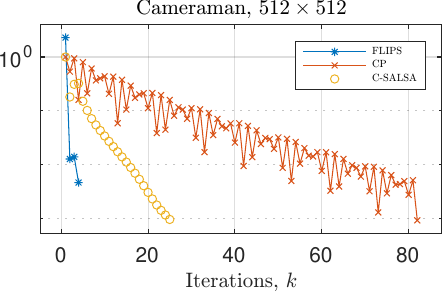}

\caption{Comparison of FLIPS with C-SALSA and CP algorithms on image denoising (full images). }
\label{fig:image-denoising-comparison}
\end{figure}

\begin{table}[]
\caption{Comparison of FLIPS with CP and C-SALSA algorithms, for image denoising on full images of `Lena', `Barbara', and `Cameraman' images.}
\label{tab:CPUtimes-image-denoising-full-image}
\begin{tabular}{|ccccccc|}
\hline
\multicolumn{1}{|c|}{}          & \multicolumn{2}{c|}{FLIPS}                                        & \multicolumn{2}{c|}{CP}                                           & \multicolumn{2}{c|}{C-SALSA}                 \\ \hline
\multicolumn{1}{|c|}{}          & \multicolumn{1}{c|}{CPU time} & \multicolumn{1}{c|}{\# iteration} & \multicolumn{1}{c|}{CPU time} & \multicolumn{1}{c|}{\# iteration} & \multicolumn{1}{c|}{CPU time} & \# iteration \\ \hline
\multicolumn{7}{|c|}{\textbf{\textit{Results for ``Lena''}}}                                                                                                                                                                             \\ \hline
\multicolumn{1}{|c|}{128 x 128} & \multicolumn{1}{c|}{\cellcolor{blue!10} 0.023}    & \multicolumn{1}{c|}{2}            & \multicolumn{1}{c|}{0.163}    & \multicolumn{1}{c|}{60}           & \multicolumn{1}{c|}{0.027}    & 9            \\ \hline
\multicolumn{1}{|c|}{256 x 256} & \multicolumn{1}{c|}{\cellcolor{blue!10} 0.045}    & \multicolumn{1}{c|}{2.25}         & \multicolumn{1}{c|}{0.275}    & \multicolumn{1}{c|}{53}           & \multicolumn{1}{c|}{0.07}     & 9            \\ \hline
\multicolumn{1}{|c|}{512 x 512} & \multicolumn{1}{c|}{ \cellcolor{blue!10}0.219}    & \multicolumn{1}{c|}{3}            & \multicolumn{1}{c|}{0.54}     & \multicolumn{1}{c|}{42}           & \multicolumn{1}{c|}{0.39}     & 16           \\ \hline
\multicolumn{7}{|c|}{\textbf{\textit{Results for ``Barbara''}}}                                                                                                                                                                          \\ \hline
\multicolumn{1}{|c|}{128 x 128} & \multicolumn{1}{c|}{\cellcolor{blue!10} 0.035}    & \multicolumn{1}{c|}{2}            & \multicolumn{1}{c|}{0.145}    & \multicolumn{1}{c|}{53}           & \multicolumn{1}{c|}{0.036}    & 9            \\ \hline
\multicolumn{1}{|c|}{256 x 256} & \multicolumn{1}{c|}{\cellcolor{blue!10} 0.043}    & \multicolumn{1}{c|}{2}            & \multicolumn{1}{c|}{0.285}    & \multicolumn{1}{c|}{53}           & \multicolumn{1}{c|}{0.068}    & 9            \\ \hline
\multicolumn{1}{|c|}{512 x 512} & \multicolumn{1}{c|}{\cellcolor{blue!10} 0.286}    & \multicolumn{1}{c|}{4}            & \multicolumn{1}{c|}{0.627}    & \multicolumn{1}{c|}{53}           & \multicolumn{1}{c|}{0.347}    & 15           \\ \hline
\multicolumn{7}{|c|}{\textbf{\textit{Results for ``Cameraman''}}}                                                                                                                                                                        \\ \hline
\multicolumn{1}{|c|}{128 x 128} & \multicolumn{1}{c|}{0.026}    & \multicolumn{1}{c|}{3}            & \multicolumn{1}{c|}{0.137}    & \multicolumn{1}{c|}{55}           & \multicolumn{1}{c|}{\cellcolor{blue!10} 0.025}    & 9            \\ \hline
\multicolumn{1}{|c|}{256 x 256} & \multicolumn{1}{c|}{\cellcolor{blue!10} 0.06}     & \multicolumn{1}{c|}{3}            & \multicolumn{1}{c|}{0.289}    & \multicolumn{1}{c|}{53}           & \multicolumn{1}{c|}{0.073}    & 9            \\ \hline
\multicolumn{1}{|c|}{512 x 512} & \multicolumn{1}{c|}{\cellcolor{blue!10} 0.248}    & \multicolumn{1}{c|}{4}            & \multicolumn{1}{c|}{0.525}    & \multicolumn{1}{c|}{42}           & \multicolumn{1}{c|}{0.388}    & 16           \\ \hline
\end{tabular}
\end{table}

In addition, we also consider the denoising problem for the three images by directly working on the entire image as a single patch instead of considering smaller and sliding patches as in Table \ref{tab:denoising-comparison-cpu-times}. For this, we first obtain noise-free images of size \mmode{128 \times 128}, \mmode{256 \times 256}, and \mmode{512 \times 512} pixels. Then, similar to the previous experiment, a Gaussian noise of variance \mmode{ \sigma^2 = 0.0055} is added with the MATLAB function \texttt{imnoise} to obtain the noisy image. Then the noisy image is denoised by solving the corresponding LIP (for the full image) using FLIPS, CP, and C-SALSA algorithms with respective parameters tuned to give better respective convergence results. 

We compare their convergence attributes on the metric \mmode{\norm{\mathsf{I}_k - \mathsf{I}^* } }, where \mmode{\mathsf{I}_k} is the image after \mmode{k} iterations of the respective algorithm and \mmode{ \mathsf{I}^* } is the optimal solution obtained apriori by running FLIPS for many iterations (and confirmed with other methods for optimality). Convergence plots for FLIPS, CP, and C-SALSA algorithms for image denoising on the three images of `Lena', `Barbara', and `Cameraman' for varying sizes of \mmode{128 \times 128}, \mmode{256 \times 256}, and \mmode{512 \times 512} are provided in Figure \ref{fig:image-denoising-comparison}, and the corresponding CPU times (averaged over 10 iterations of different noise) is tabulated in Table \ref{tab:CPUtimes-image-denoising-full-image}. It must be observed that FLIPS only takes approximately \mmode{\sim 4} iterations to converge to the optimal solution, which is incredibly fast.

\subsection{Comparison with FISTA for a trajectory of solutions}
\label{subsection:FLIPS-vs-FISTA}
Finally, we would like to compare the convergence of FLIPS with FISTA for solving an LIP. In this regard, we first generate the problem data, namely: (i)-liner map \mmode{\linmap}, (ii)-Sparse vectors \mmode{F^{tr}}, and (iii)-noisy measurements \mmode{X}, randomly and independently from each other as
\begin{enumerate}
    \item \mmode{\linmap \in \R{m \times K}} is generated randomly by sampling each entry \mmode{\linmap_{ij} \sim \mathcal{N}(0,1)}

    \item \mmode{F^{tr} \in \R{K \times T} }, is randomly generated by first sampling every entry \mmode{F^{tr}_{ij} \sim \mathcal{N} (0,1)}, and then every column is made to be \mmode{S}-sparse by zeroing all but the \mmode{S}-largest entries in magnitude

    \item We first obtain the true measurements as \mmode{ \R{m, N} \ni X^{tr} = \linmap \cdot F^{tr} }, from which the noisy measurements \mmode{X = X^{tr} + \sigma_w W} are obtained by adding an AWGN \mmode{W \in \R{m \times T}}. Each entry \mmode{W_{ij} \sim \mathcal{N}(0,1)} is iid and sampled independently from the previous data (i.e., \mmode{\linmap} and \mmode{F^{tr}}), and \mmode{\sigma_w > 0 } is a scalar constant chosen to satisfy a specified SNR level as
    \[
    \sigma_w = \pnorm{X^{tr}}{fro}\sqrt{ \big( \nicefrac{1}{mN} \big) 10^{\frac{-SNR}{10}} }
    \]
\end{enumerate}

Given the problem data: \mmode{(\linmap, X)}, we obtain two estimates of \mmode{F^{tr}} by solving two different formualtions of a Linear Inverse problem. Let \mmode{F^c(\eps)} be the estimate obtained by solving the \textit{constrained} formulation of the LIP using FLIPS, and \mmode{F^r(\lambda)} be the estimate obtained by solving the \mmode{\ell_1}-\textit{regularized LASSO} formulation using FISTA. To this end, for any \mmode{\eps > 0} and \mmode{\lambda > 0}, let us define
\begin{equation}
\label{eq:FLIPS-v-FISTA}
\begin{cases}
\begin{aligned}
F^{c}(\eps) & \in \argmin_{F} \quad \pnorm{F}{(1,1)} \quad \sbjto \ \pnorm{X - \linmap F}{fro} \leq \eps , \text{ and } \\
F^{r}(\lambda) & \in \argmin_{F} \quad \lambda \pnorm{F}{(1,1)} \ + \ \pnorm{X - \linmap F}{fro}^2 ,
\end{aligned}
\end{cases}
\end{equation}
where \mmode{\pnorm{M}{(1,1)} = \summ{i,j}{}\abs{M_{ij}} }, and \mmode{ \pnorm{M}{fro}^2 = \summ{i,j}{} \abs{M_{ij}}^2 }.

The problems \eqref{eq:FLIPS-v-FISTA} are solved for a range of values of \mmode{(\eps_t)_t} and \mmode{(\lambda_t)_t}, for \mmode{t = 1,2,\ldots,T}. The solutions \mmode{F^c (\eps_t)}, \mmode{F^r (\lambda_t)} are used as initial conditions when computing \mmode{F^c (\eps_{t + 1})}, \mmode{F^r (\lambda_{t + 1})}. Since computing the solutions \mmode{F^c (\eps)} and \mmode{F^r (\lambda)} is easier for larger values of \mmode{\eps} and \mmode{ \lambda}, we select the sequences \mmode{\sigma_w \sqrt{mN}  = \eps_0 \geq \cdots \eps_t \geq \eps_{t+1} \geq \cdots }, and \mmode{25 = \lambda_0  \geq \lambda_t \geq \lambda_{t + 1} \geq \cdots} in decreasing order to minimize the number of iterations required. The gradient-descent step-size \mmode{(\nicefrac{1}{\beta})} in FLIPS is chosen such that \mmode{\stepsizeh{} \sim 0.01}, which is achieved by selecting \mmode{\smooth^+ = 0.01 (\nicefrac{\smooth}{\stepsize}) }, whereas for FISTA, the gradient-descent step-size is chosen as \mmode{(\nicefrac{1}{L})}, where \mmode{L = \lambda_{max} (\linadj \linmap) } is the smoothness constant.

\begin{figure}[h]
    \centering
    \includegraphics[width=\linewidth]{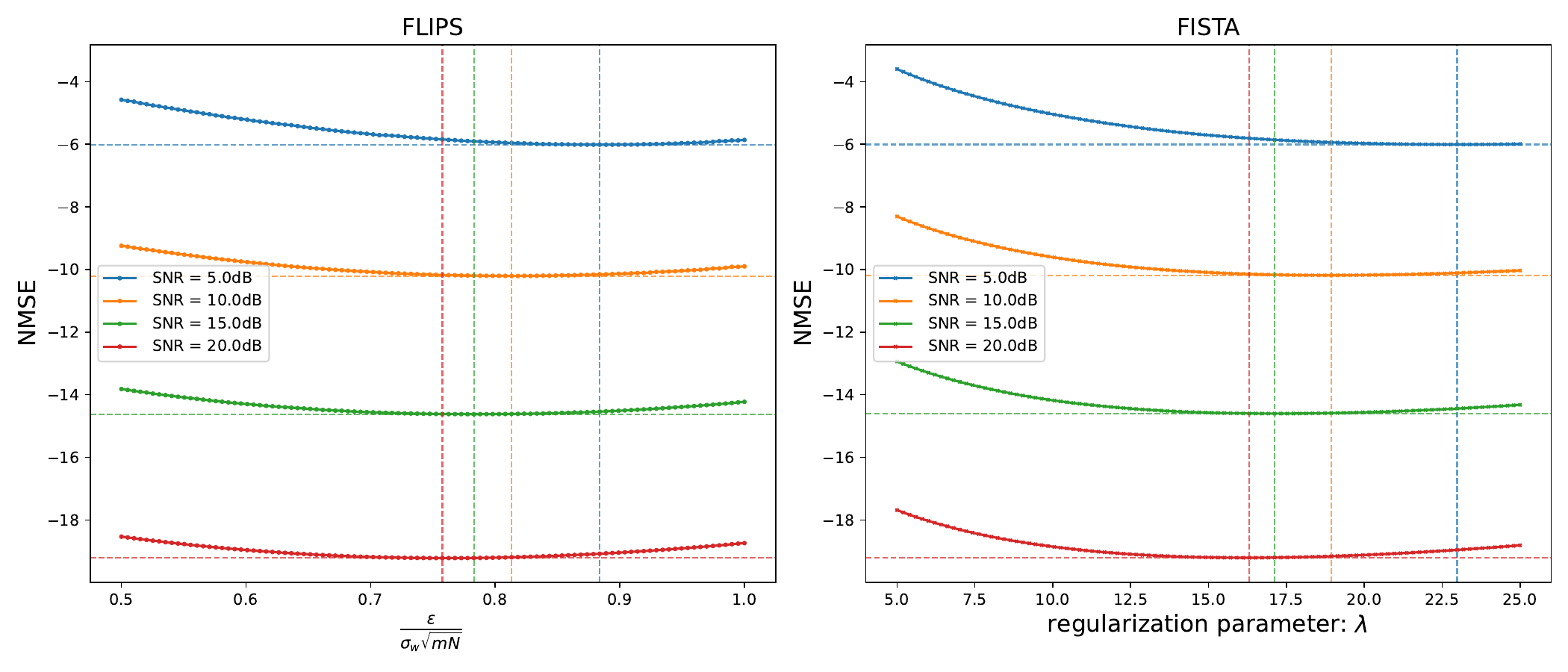}
    \caption{The Normalized Mean Squred Error (NMSE) of the optimal solution computed using FLIPS and FISTA as defined in \eqref{eq:NMSE-definition}. For every value of SNR, the minimum value of NMSE and the corresponding value of the parameters \mmode{\eps} and \mmode{\lambda} are indicated by the intersecting horizontal and vertical dotted lines of the same color.}
    \label{fig:FLIPS-v-FISTA-NMSE}
\end{figure}

For every \mmode{t =1,2,\ldots,T}, we record two metrics:
\begin{itemize}[leftmargin = *]
\item \emph{Normalized Mean Squared Error}  in reconstruction: This measures the quality of a solution \mmode{F} relative to the true solution \mmode{F^{tr}} defined as 
\begin{equation}
\label{eq:NMSE-definition}
NMSE(F) = 10 \log_{10} \left( \frac{ \pnorm{ \widehat{F}  - F^{tr} }{fro}^2 }{ \pnorm{F^{tr}}{fro}^2 } \right) , 
\text{ where } \widehat{F} = \frac{\inprod{X}{\linmap F}}{ \pnorm{\linmap F }{fro}^2 }  F .\footnote{\mmode{\widehat{F}} is a solution obtained by optimally scaling \mmode{F}.}
\end{equation}
In Figure \ref{fig:FLIPS-v-FISTA-NMSE}, we plot the mappings \mmode{t \mapsto NMSE(F^c (\eps_t))} and \mmode{t \mapsto NMSE( F^r(\lambda_t) ) } at various SNR levels.

\item \emph{Iterations to converge}: In Figure \ref{fig:FLIPS-v-FISTA-conv-iter}, we record the number of iterations required \mmode{k(\eps_t)}, \mmode{k(\lambda_t)} to converge for both FLIPS and FISTA as a function of \mmode{\eps_t} and \mmode{\lambda_t} respectively. Convergence is defined as satisfaction of first-order optimality conditions, where the parameters for convergence criteria are tuned separately to get best result for each algorithm. Moreover, in Figure \ref{fig:FLIPS-v-FISTA-cumulative-conv-iter}, we also plot the cumulative number of iterations required \mmode{K(\eps_t) \define \summ{s \geq t}{} k(\eps_s)}, and \mmode{K(\lambda_t) \define \summ{s \geq t}{} k(\lambda_s)} to find a solution for \mmode{\eps_t}, \mmode{\lambda_t} parameterized LIP with FLIPS and FISTA respectively starting from their initial values of \mmode{\eps_0} and \mmode{\lambda_0}. In Table \ref{tab:FLIPS-v_FISTA-iter}, we also report the cumulative number of iterations required to compute a solution corresponding to minimum NMSE value at a specific SNR level, i.e., the solution corresponding to the 

\end{itemize}

\begin{figure}[h]
  \centering
    \begin{subfigure}{\linewidth}
        \centering
      \includegraphics[width=\linewidth]{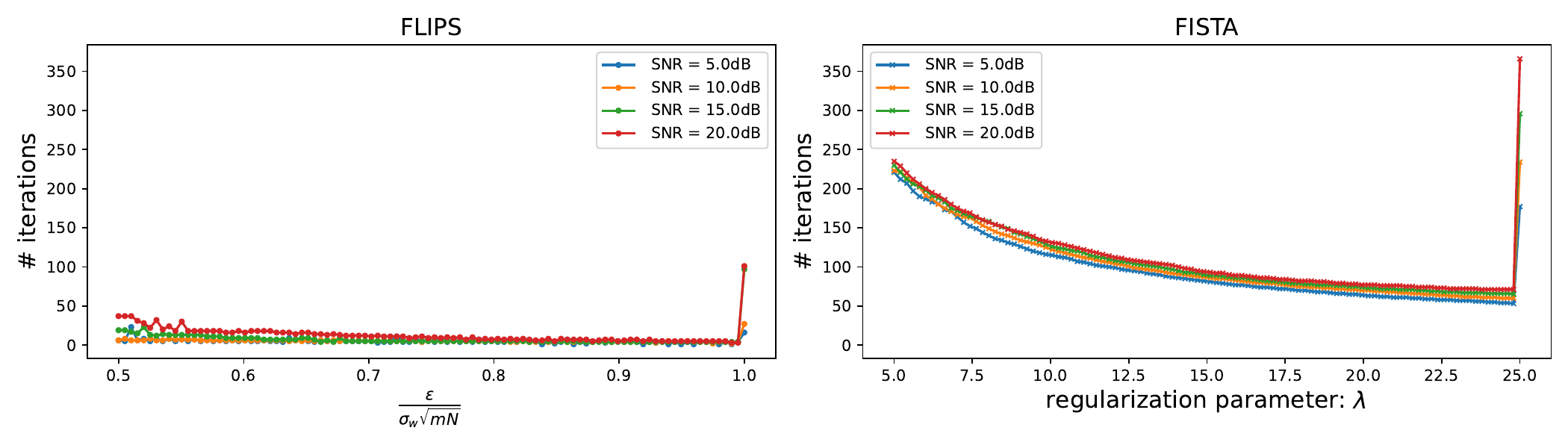}
      \caption{Iterations required to converge.}
      \label{fig:FLIPS-v-FISTA-conv-iter}
    \end{subfigure}\\
    \begin{subfigure}{\linewidth}
        \centering
      \includegraphics[width=\linewidth]{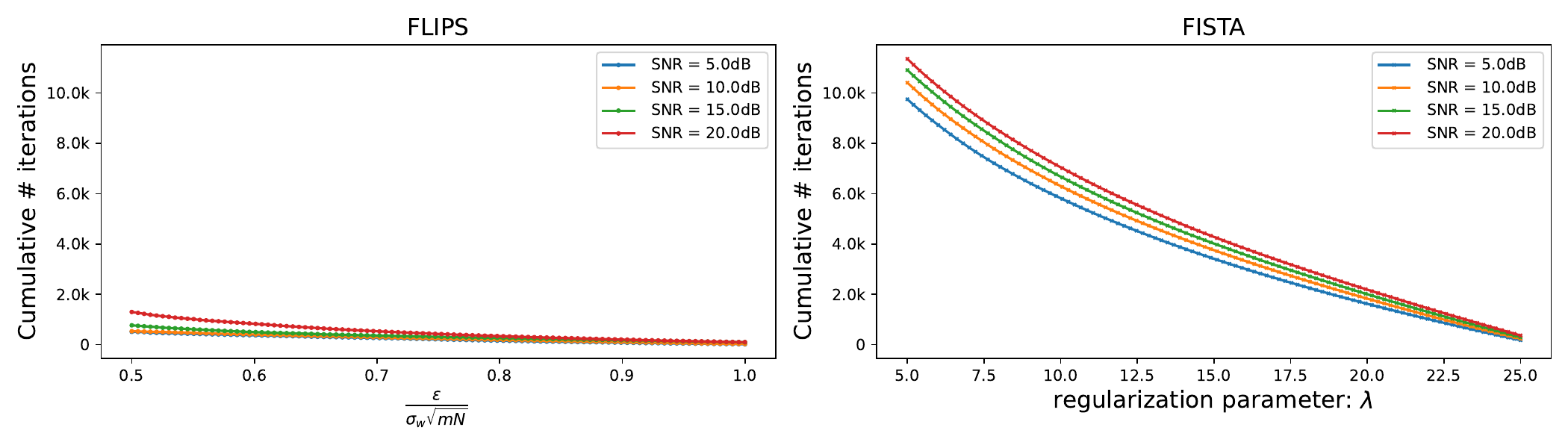}
      \caption{Cumulative number of iterations required to converge.}
      \label{fig:FLIPS-v-FISTA-cumulative-conv-iter}
    \end{subfigure}    
  \caption{Comparison of the speed of FLIPS and FISTA in solving LIPs.}
\end{figure}

\begin{table}[h]
    \centering
    \begin{tabular}{lcccc}
        \toprule
        SNR levels &  5dB & 10dB & 15dB & 20dB \\
        \midrule
        FLIPS (iterations)   & \textbf{93.0}   & \textbf{179.0}  & \textbf{282.0}  & \textbf{412.0}  \\
        FISTA (iterations)   & 731.0 & 2206.0 & 3114.0 & 3690.0 \\
        \bottomrule
    \end{tabular}
    \caption{Cumulative iteration required by FLIPS and FISTA to compute a minimum NMSE solution across different SNR levels.}
    \label{tab:FLIPS-v_FISTA-iter}
\end{table}

It can be easily seen from Figure \ref{fig:FLIPS-v-FISTA-NMSE}, that for \mmode{\eps \sim 0.85 \sigma_w \sqrt{mN}}, the NMSE for the solution of the constrained LIP is consistently close to the minimum for a range of SNR values. Therefore, the solution obtained from FLIPS for \mmode{\eps = 0.85 \sigma_w \sqrt{mN} } would be satisfactory for a range of SNR levels. So, in principle, one does not have to solve the constrained LIP for a range of values of \mmode{\eps}, which would significantly reduce the number of FLIPS iterations required to compute a satisfactory solution. For example, if we were to run FLIPS with \mmode{\eps = 0.85 \sigma_w \sqrt{mN} }, the number of iterations required to converge at SNR levels: 5,10,15, and 20dB would only be 29, 39, 71, and 81 respectively, which are considerably fewer than the ones reported in Table \ref{tab:FLIPS-v_FISTA-iter}.

\subsection{Comparison with Accelerated Projected Gradient Descent \mmode{\epsbar} (Remark \ref{remark:APGD-algorithm})}
In Figure \ref{fig:PAGDcompare}, we compare the sub-optimality \mmode{\etah{k} - \cost\opt} of iterates generated by FLIPS and the canonical projected accelerated gradient descent (PAGD) as in \eqref{eq:PAGD} (applied with \mmode{\big( \nicefrac{1}{b} \big) = 2.2\cdot10^{-6}} after tuning). Figure \ref{fig:PAGDcompare} clearly shows that FLIPS outperforms PAGD in terms of convergence.

\begin{figure}[h]
    \centering
    \begin{subfigure}[t]{0.5\textwidth}
        \centering
        \includegraphics[width = 0.85\textwidth]{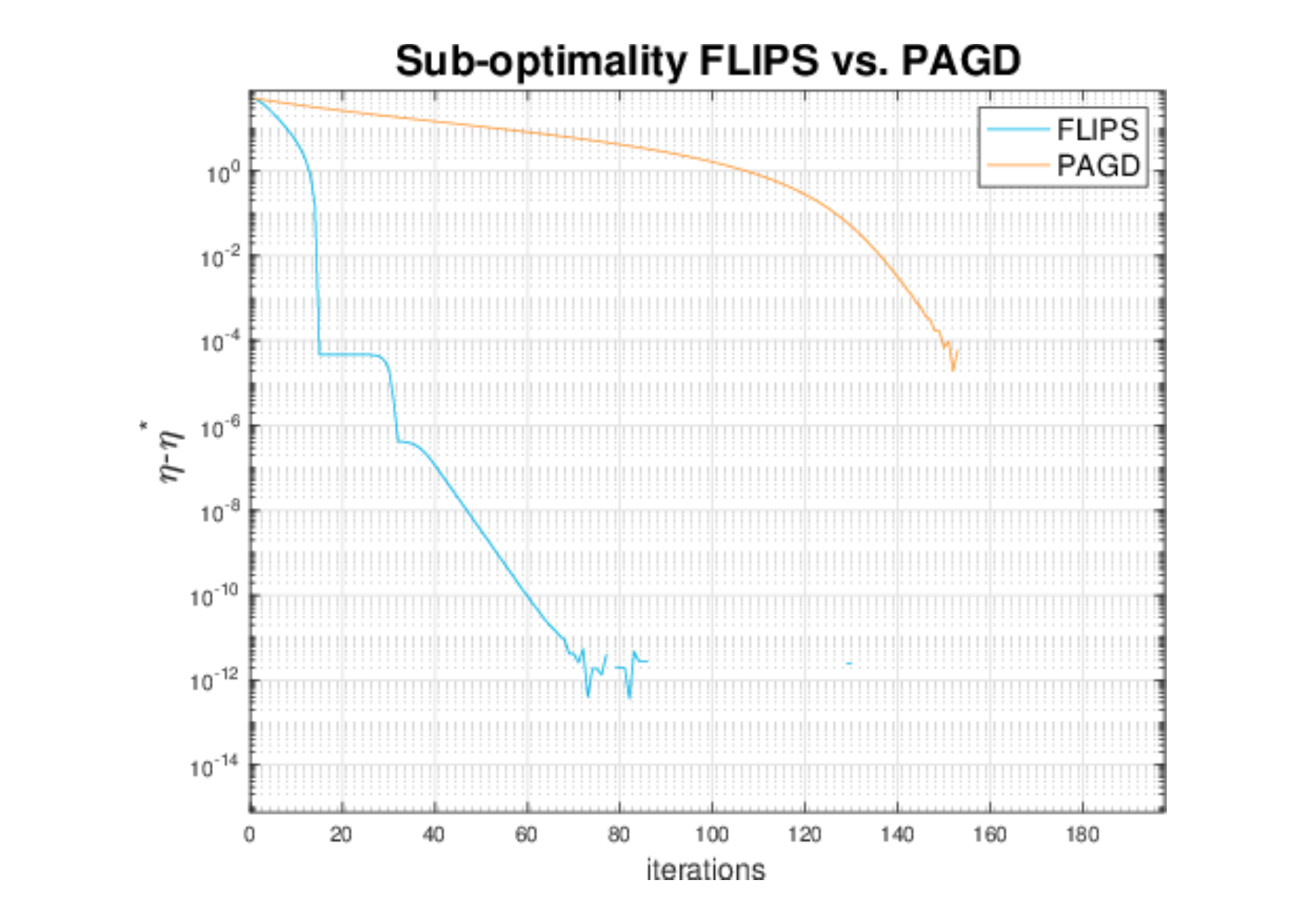}
\caption{FLIPS vs PAGD}\label{fig:PAGDcompare}
    \end{subfigure}%
    ~ 
    \begin{subfigure}[t]{0.5\textwidth}
        \centering
        \includegraphics[width = 0.95\textwidth]{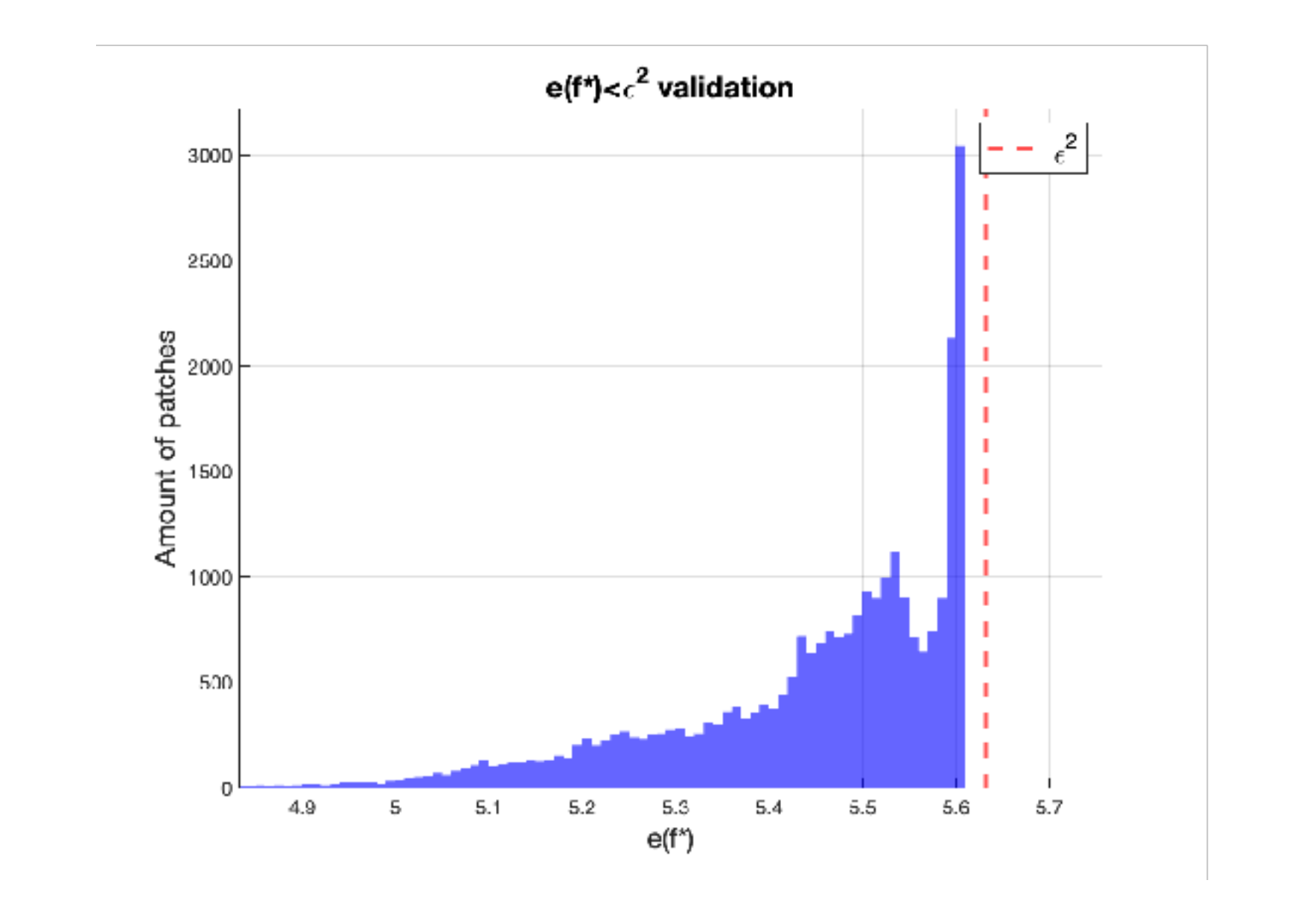}
    \caption{Validation of Remark \ref{remark:chosing-epsbar}.}
    \label{fig:ef_epsilonplot}
    \end{subfigure}
    \caption{Comparison of FLIPS with projected accelerated GD (a), and in (b) the histogram of the values of \mmode{e(\rep^{\opt})} for all 32\(\times\)32 patches of the 'cameraman image'.}
\end{figure}

\subsection{Empirical validation for choosing \mmode{\epsbar} (Remark \ref{remark:chosing-epsbar})}
\label{subsection:epsbar-numerical-evidence}
To empirically validate Remark \ref{remark:chosing-epsbar}, an experiment was conducted to check if \(e(\rep^{\opt})<\epsbar^2\) by a margin. From the full 200\(\times\)200 image, 28561 different 32\mmode{\times}32 patches were extracted and the corresponding LIPs were solved to obtain the optimal solution \mmode{\minvarf\opt} for each plot. Then the histogram of values \mmode{e(\minvarf\opt)} collected for all 28561 patches is plotted in Figure \ref{fig:ef_epsilonplot} (with a bandwidth of 0.01).
It can be seen that there is a clear gap between the maximum \(e(\rep^{\opt})\) and \(\epsbar^2\). Thus, verifying empirically Remark \ref{remark:chosing-epsbar}.

\section{Technical Proofs}\label{chap:discussion-proofs}

Let \mmode{\setlambda \define \{ \lambda \in \hilbert : \inprod{\maxvar}{\measurement} - \eps \norm{\maxvar} > 0 \}} and recall that \mmode{\sqfunc = \sqterm}.

\begin{proposition}
\label{proposition:max-problem-solution}
Let \mmode{\measurement , \linmap}, and \mmode{\eps} be such that Assumption \ref{assumption:lip-main} holds. For any \mmode{\minvar \in \costball}, considering the maximization problem 
\begin{equation}
\label{eq:etah-max-problem}
\begin{cases}
\sup\limits_{\maxvar \in \setlambda } \quad \lagrangian \define 2 \sqfunc \; - \; \inprod{\maxvar}{\linmap (\minvar)} ,
\end{cases}
\end{equation}
the following assertions hold.
\begin{enumerate}[leftmargin = *, label = \rm{(\roman*)}]
\item The maximization problem \eqref{eq:etah-max-problem} is bounded if and only if \mmode{\minvar \in \hcone}, and the maximal value is equal to \mmode{\etah{}}. In other words,
\begin{equation}
\label{eq:eta-as-sup-value}
\etah{} \ = \ \sup_{\maxvar \in \setlambda} \; \lagrangian \; \quad \text{ for all } \minvar \in \hcone.
\end{equation}

\item The maximization problem \eqref{eq:etah-max-problem} admits a unique maximizer \mmode{\maxvar(\minvar)} if and only if \mmode{\minvar \in \interior \left( \hcone \right) = \{ \minvar \in \hilbert : \inprod{\measurement}{\linmap(\minvar)} > 0, \text{ and } \eh{} < \eps^2 \} },  which is given by
\begin{equation}
\label{eq:lambdah-value}
\maxvar (\minvar) = \frac{\etah{}}{\normphih{} \sqrt{\epsminuse{}} } \big( \xminusetaphih{}  \big) .
\end{equation}
\end{enumerate}
\end{proposition}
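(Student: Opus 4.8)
The plan is to prove both parts simultaneously through a two-sided estimate on the inner supremum, and to read off the maximizer from the equality cases of the upper bound. Throughout write $p \define \phih{}$. A useful preliminary observation is that, after rationalising the quotient in \eqref{eq:etah-definition}, $\etah{}$ is exactly the smaller root $\theta_-$ of the scalar quadratic $\norm{\measurement - \theta p}^2 = \eps^2$; concretely $\etah{}\normphih{}^2 = \inprod{\measurement}{p} - \normphih{}\sqrt{\eps^2-\eh{}}$, so $\inprod{\measurement}{p} - \etah{}\normphih{}^2 = \normphih{}\sqrt{\eps^2-\eh{}}$, and $\etah{}>0$ (Assumption \ref{assumption:lip-main}) with $\norm{\measurement - \etah{}p} \le \eps$. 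Also recall $\interior(\hcone) = \{\minvar : \inprod{\measurement}{p} > 0,\ \eh{} < \eps^2\}$.

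\textbf{Unboundedness when $\minvar \notin \hcone$.} I would exhibit explicit unbounded rays in $\setlambda$ in two cases. If $\inprod{\measurement}{p} \le 0$ (this includes $p=0$), take $\maxvar = t\measurement$, $t>0$: then $\inprod{\maxvar}{\measurement} - \eps\norm{\maxvar} = t\normx(\normx-\eps) > 0$, so $\maxvar\in\setlambda$, and $\lagrangian = 2\sqrt{t\normx(\normx-\eps)} - t\inprod{\measurement}{p} \to +\infty$. If instead $\inprod{\measurement}{p} > 0$ but $\eh{} > \eps^2$, write $\measurement = \measurement_\parallel + \measurement_\perp$ with $\measurement_\parallel \in \Span\{p\}$ and $\measurement_\perp \perp p$, so that $\norm{\measurement_\perp}^2 = \eh{} > \eps^2$, and take $\maxvar = t\measurement_\perp$: then $\inprod{\maxvar}{p} = 0$ and $\inprod{\maxvar}{\measurement} - \eps\norm{\maxvar} = t\norm{\measurement_\perp}(\norm{\measurement_\perp}-\eps) > 0$, hence $\lagrangian = 2\sqrt{t\norm{\measurement_\perp}(\norm{\measurement_\perp}-\eps)} \to +\infty$. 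This gives the ``only if'' direction of (i).

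\textbf{The value when $\minvar \in \hcone$.} For the upper bound, for any $\maxvar\in\setlambda$ I use $\norm{\measurement-\etah{}p}\le\eps$ and Cauchy--Schwarz to get \[ \inprod{\maxvar}{\measurement} - \eps\norm{\maxvar} \ \le\ \inprod{\maxvar}{\measurement} - \norm{\measurement-\etah{}p}\,\norm{\maxvar} \ \le\ \inprod{\maxvar}{\measurement} - \inprod{\maxvar}{\measurement-\etah{}p} \ =\ \etah{}\,\inprod{\maxvar}{p}. \] Since the left-hand side is positive on $\setlambda$ and $\etah{}>0$, this forces $b\define\inprod{\maxvar}{p}>0$, whence $\lagrangian = 2\sqrt{\inprod{\maxvar}{\measurement}-\eps\norm{\maxvar}} - b \le 2\sqrt{\etah{}\,b} - b \le \etah{}$, the last step being $(\sqrt{b}-\sqrt{\etah{}})^2 \ge 0$. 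Thus the problem is bounded with value $\le\etah{}$. For the matching lower bound when $\minvar\in\interior(\hcone)$, I plug in the candidate $\maxvar(\minvar)$ of \eqref{eq:lambdah-value}, i.e. $\maxvar(\minvar) = \kappa\,(\measurement-\etah{}p)$ with $\kappa = \etah{}/\big(\normphih{}\sqrt{\eps^2-\eh{}}\big) > 0$, and check directly that $\norm{\maxvar(\minvar)} = \kappa\eps$, $\inprod{\maxvar(\minvar)}{\measurement}-\eps\norm{\maxvar(\minvar)} = \etah{}^2 > 0$ (so $\maxvar(\minvar)\in\setlambda$) and $\inprod{\maxvar(\minvar)}{p} = \etah{}$, using the preliminary identity for $\inprod{\measurement}{p}-\etah{}\normphih{}^2$; hence $L(\maxvar(\minvar),\minvar) = 2\etah{}-\etah{} = \etah{}$. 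When $\eh{}=\eps^2$ there is no finite maximizer, and I would instead use the sequence $\maxvar_k = \alpha_k\,\widehat m_\perp + (\etah{}/\normphih{})\,\widehat p$ with $\widehat p = p/\normphih{}$, $\widehat m_\perp = \measurement_\perp/\norm{\measurement_\perp}$ and $\alpha_k\to+\infty$: one verifies $\inprod{\maxvar_k}{p}\equiv\etah{}$ and $\inprod{\maxvar_k}{\measurement}-\eps\norm{\maxvar_k}\to\etah{}^2$ from above (staying in $\setlambda$), so $L(\maxvar_k,\minvar)\to\etah{}$. Either way $\sup_{\maxvar\in\setlambda}\lagrangian = \etah{}$, completing (i).

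\textbf{Existence and uniqueness of the maximizer, and the obstacle.} For (ii) I trace back equality in the upper-bound chain: a maximizer $\maxvar^{\ast}$ must give equality in Cauchy--Schwarz with the right orientation, i.e. $\maxvar^{\ast} = \kappa(\measurement-\etah{}p)$ with $\kappa\ge 0$, together with $\inprod{\maxvar^{\ast}}{p}=\etah{}$; the latter reads $\kappa\big(\inprod{\measurement}{p}-\etah{}\normphih{}^2\big) = \kappa\,\normphih{}\sqrt{\eps^2-\eh{}} = \etah{}$, which has a (unique, positive) solution precisely when $\eh{}<\eps^2$, i.e. $\minvar\in\interior(\hcone)$, yielding exactly \eqref{eq:lambdah-value}; when $\eh{}=\eps^2$ the left side vanishes while $\etah{}>0$, so no maximizer exists, consistent with the approximating-sequence argument. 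The one genuinely delicate point is the boundary stratum $\eh{}=\eps^2$ of $\hcone$: there the supremum equals $\etah{}$ but is not attained, so differentiation is unavailable and one must build the sequence $(\maxvar_k)$ carefully so that $\inprod{\maxvar_k}{p}$ stays pinned at $\etah{}$ while $\inprod{\maxvar_k}{\measurement}-\eps\norm{\maxvar_k}$ descends to $\etah{}^2$ from within the open cone $\setlambda$; everything else reduces to the elementary inequality $2\sqrt{\etah{}b}-b\le\etah{}$ and routine bookkeeping with the splitting $\measurement = \measurement_\parallel + \measurement_\perp$.
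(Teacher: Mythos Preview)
Your argument is correct and takes a genuinely different route from the paper. The paper relies on external results from \cite{Sheriff2020NovelProblems} (their Lemmas~35--36) to characterise boundedness of \eqref{eq:etah-max-problem} through the auxiliary scalar problem \(\min\{\theta\ge 0:\norm{\measurement-\theta\phih{}}\le\eps\}\), and then derives the maximiser by writing out the first-order optimality condition \(\nabla_{\maxvar} L = 0\) and solving the resulting implicit equation. Your approach, by contrast, is entirely self-contained: you obtain the sharp upper bound \(L\le\etah{}\) directly from the single Cauchy--Schwarz estimate \(\inprod{\maxvar}{\measurement}-\eps\norm{\maxvar}\le\etah{}\,\inprod{\maxvar}{\phih{}}\) (using \(\norm{\measurement-\etah{}\phih{}}=\eps\)) followed by the AM--GM inequality \(2\sqrt{\etah{} b}-b\le\etah{}\), and then recover the maximiser by tracing the equality cases. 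This is more elementary and arguably cleaner; the paper's route has the advantage of fitting into a broader duality framework and making the link to Danskin's theorem (used later for \(\nabla\eta\)) transparent.

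One small correction: in your boundary case \(\eh{}=\eps^2\), the quantity \(\inprod{\maxvar_k}{\measurement}-\eps\norm{\maxvar_k}=\eta(\minvar)^2+\eps\big(\alpha_k-\sqrt{\alpha_k^2+c^2}\big)\) approaches \(\eta(\minvar)^2\) from \emph{below}, not from above, since \(\sqrt{\alpha_k^2+c^2}>\alpha_k\). This is harmless for your purpose---what matters is that it stays strictly positive for large \(\alpha_k\) (so \(\maxvar_k\in\setlambda\)) and that \(L(\maxvar_k,\minvar)\to\etah{}\), both of which hold.
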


\begin{figure}[h]
         \centering
         \includegraphics[width=0.6\textwidth]{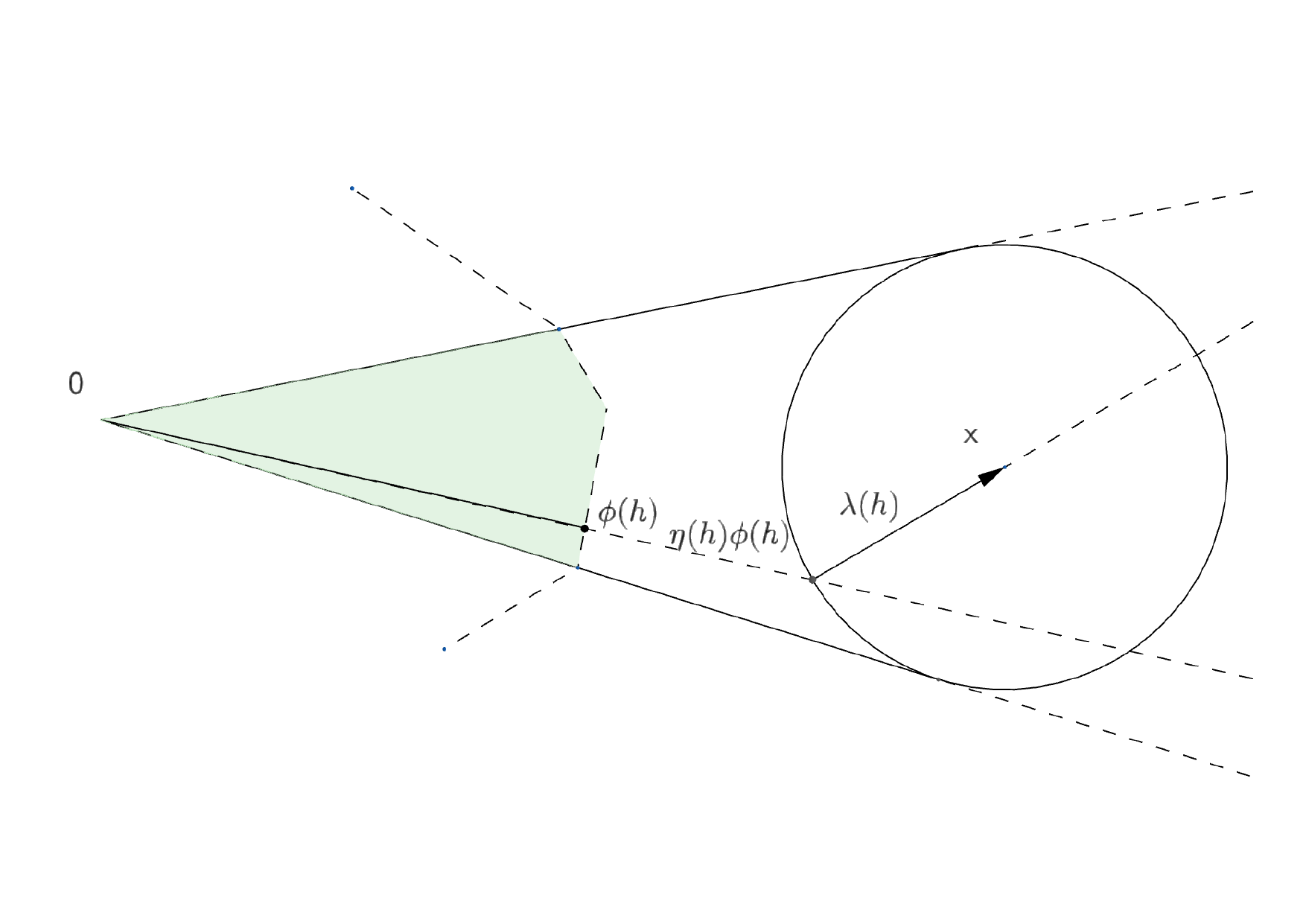}
\caption{Graphical overview of the direction of \mmode{\maxvarh{}}}\label{fig:Diagramlambda}
\end{figure}

\noindent The proof of Proposition \ref{proposition:max-problem-solution} relies heavily on \citep[Lemma 35, 36]{Sheriff2020NovelProblems} under the setting \mmode{r = 2, q = 0.5}, and \mmode{\delta = 0}. We shall first provide three lemmas from which Proposition \ref{proposition:max-problem-solution} follows easily.

\begin{lemma}[Unboundedness in \eqref{eq:etah-max-problem}]
\label{lemma:max-problem-unboundedness}
The maximization problem \eqref{eq:etah-max-problem} is unbounded for \mmode{\minvar \notin \hcone}.
\end{lemma}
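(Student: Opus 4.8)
The plan is to show that for every $\minvar \notin \hcone$ there is a ray contained in $\setlambda$ along which the objective of \eqref{eq:etah-max-problem}, namely $\lagrangian = 2\sqterm - \inprod{\maxvar}{\linmap(\minvar)}$, tends to $+\infty$. The structural fact driving this is the scaling behaviour of $L$ in its first argument: since $\setlambda$ is a cone, $r\maxvar \in \setlambda$ whenever $\maxvar \in \setlambda$ and $r > 0$, and
\[
L(r\maxvar, \minvar) \;=\; 2\sqrt{r}\,\sqrt{\inprod{\maxvar}{\measurement} - \eps\norm{\maxvar}}\;-\;r\,\inprod{\maxvar}{\linmap(\minvar)} .
\]
Hence it suffices to exhibit a single $\maxvar_0 \in \setlambda$ with $\inprod{\maxvar_0}{\linmap(\minvar)} \le 0$: for such a $\maxvar_0$ one has $L(r\maxvar_0, \minvar) \ge 2\sqrt{r}\,\sqrt{\inprod{\maxvar_0}{\measurement} - \eps\norm{\maxvar_0}} \longrightarrow +\infty$ as $r \to +\infty$, the quantity under the square root being strictly positive precisely because $\maxvar_0 \in \setlambda$.

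To produce $\maxvar_0$ I would split into two cases according to the sign of $\inprod{\measurement}{\linmap(\minvar)}$. If $\inprod{\measurement}{\linmap(\minvar)} \le 0$ (which in particular covers $\linmap(\minvar) = 0$), take $\maxvar_0 = \measurement$; then $\inprod{\maxvar_0}{\measurement} - \eps\norm{\maxvar_0} = \normx^2 - \eps\normx = \normx(\normx - \eps) > 0$ by Assumption \ref{assumption:lip-main}, so $\maxvar_0 \in \setlambda$, while $\inprod{\maxvar_0}{\linmap(\minvar)} = \inprod{\measurement}{\linmap(\minvar)} \le 0$ by the case hypothesis. If instead $\inprod{\measurement}{\linmap(\minvar)} > 0$, then $\linmap(\minvar) \ne 0$ and, since $\minvar \notin \hcone$, the definition \eqref{eq:hcone-definition} forces $e(\minvar) > \eps^2$, i.e. $\inprod{\measurement}{\linmap(\minvar)}^2 < \norm{\linmap(\minvar)}^2 \xminuseps$ by \eqref{eq:e-func-def}. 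Here I take $\maxvar_0 := \measurement - t_\ast\,\linmap(\minvar)$ with $t_\ast := \inprod{\measurement}{\linmap(\minvar)}/\norm{\linmap(\minvar)}^2 > 0$, i.e. $\measurement$ with its component along $\linmap(\minvar)$ removed. Then $\inprod{\maxvar_0}{\linmap(\minvar)} = 0$ by construction, and a direct computation gives $\inprod{\maxvar_0}{\measurement} = \norm{\maxvar_0}^2 = \normx^2 - \inprod{\measurement}{\linmap(\minvar)}^2/\norm{\linmap(\minvar)}^2 = e(\minvar)$. Consequently $\norm{\maxvar_0}^2 = e(\minvar) > \eps^2 > 0$, so $\maxvar_0 \ne 0$ and $\inprod{\maxvar_0}{\measurement} - \eps\norm{\maxvar_0} = \norm{\maxvar_0}\big(\norm{\maxvar_0} - \eps\big) > 0$, which places $\maxvar_0$ in $\setlambda$ with $\inprod{\maxvar_0}{\linmap(\minvar)} = 0 \le 0$, as required.

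Combining the two cases with the scaling argument of the first paragraph yields $\sup_{\maxvar \in \setlambda} \lagrangian = +\infty$ whenever $\minvar \notin \hcone$, which is the assertion. I do not expect a genuine obstacle here; the only points needing a little care are (i) checking that $\maxvar_0$ lands in the \emph{open} cone $\setlambda$ rather than merely its closure — handled above through the strict inequality $\norm{\maxvar_0} = \sqrt{e(\minvar)} > \eps$ — and (ii) treating the degenerate case $\linmap(\minvar) = 0$ cleanly, which is the reason for splitting on the sign of $\inprod{\measurement}{\linmap(\minvar)}$ rather than dividing by $\norm{\linmap(\minvar)}^2$ unconditionally.
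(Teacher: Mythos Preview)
Your proposal is correct and follows essentially the same approach as the paper: both arguments split into the cases $\inprod{\measurement}{\linmap(\minvar)} \le 0$ and $e(\minvar) > \eps^2$, and in each case pick exactly the same witness ($\maxvar_0 = \measurement$, respectively $\maxvar_0 = \measurement - \frac{\inprod{\measurement}{\linmap(\minvar)}}{\norm{\linmap(\minvar)}^2}\linmap(\minvar)$). The only difference is presentational: the paper invokes an external result (\cite[Lemma~35,~36]{Sheriff2020NovelProblems}) to reduce unboundedness to the existence of such a witness, whereas you supply the elementary scaling argument $L(r\maxvar_0,\minvar) \to +\infty$ directly, making your version self-contained.
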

\begin{proof}[Lemma \ref{lemma:max-problem-unboundedness}]
We first recall from \citep[Lemma 36 and assertion (iii) of Lemma 35]{Sheriff2020NovelProblems} that the maximal value of \eqref{eq:etah-max-problem} is unbounded if and only if there exists a \mmode{\maxvar'} such that the two following inequalities are satisfied simultaneously
\begin{equation}
\label{eq:unbounded-two-inequalities}
\inprod{\maxvar'}{\phih{}} \ \leq \ 0 \ < \ \inprod{\maxvar'}{\measurement} - \eps \norm{\maxvar'} .
\end{equation}
Since \mmode{\minvar \notin \hcone}, either \mmode{\ipxphih{} < 0} or \mmode{\eh{} > \eps^2}. On the one hand, if \mmode{\ipxphih{} < 0}, then we observe that \mmode{\maxvar' = \measurement} satisfies the two inequalities of \eqref{eq:unbounded-two-inequalities} since \mmode{\norm{\measurement} > \eps}. On the other hand, if \mmode{\eh{} > \eps^2}, then by considering \mmode{ \maxvar' = \measurement - \frac{\ipxphih{}}{\normphih{}^2} \phih{} }, we first observe that \mmode{\inprod{\maxvar'}{\phih{}} = 0}, and by Pythagoras theorem, we have \mmode{\norm{\maxvar'}^2 = \eh{}}. It is now easily verified that \mmode{\maxvar'} satisfies the two inequalities \eqref{eq:unbounded-two-inequalities} simultaneously since
\[
\begin{cases}
\begin{aligned}
\inprod{\maxvar'}{\phih{}} \; &= \; \ipxphih{} - \frac{\ipxphih{}}{\normphih{}^2} \inprod{\phih{}}{\phih{}} \; = \; 0 \\
\inprod{\maxvar'}{\measurement} - \eps \norm{\maxvar'} \; &= \; \normx^2 - \frac{\abs{\ipxphih{}}^2}{\normphih{}^2} - \eps \sqrt{\eh{}} \; = \; \sqrt{\eh{}} \big( \sqrt{\eh{}} - \eps \big) \; > \; 0.
\end{aligned}
\end{cases}
\]
Thus the lemma holds.
\end{proof}

\begin{lemma}[Optimal value of \eqref{eq:etah-max-problem}]
\label{lemma:max-problem-optimal-value}
If \mmode{\minvar \in \hcone}, then the maximal value of \eqref{eq:etah-max-problem} is finite and equal to \mmode{\etah{}} (as in \eqref{eq:etah-definition}).
\end{lemma}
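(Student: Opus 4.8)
The plan is to evaluate the supremum in \eqref{eq:etah-max-problem} directly, relying on a single algebraic identity together with elementary estimates. Fix $\minvar \in \hcone$ and write $p \define \phi(\minvar)$; since $\langle x , p\rangle > 0$ we have $p \neq 0$, and since $\|x\|^2 - \eps^2 > 0$ by Assumption \ref{assumption:lip-main} while the denominator in \eqref{eq:etah-definition} is strictly positive, $\eta(\minvar)$ is well defined with $\eta(\minvar) > 0$. The computational heart of the argument is the identity
\[
\big\| x - \eta(\minvar)\, p \big\| \;=\; \eps .
\]
To establish it I would first rationalize \eqref{eq:etah-definition}, using $\langle x , p\rangle^2 = \|p\|^2\big(\|x\|^2 - e(\minvar)\big)$ from \eqref{eq:e-func-def}, to rewrite $\eta(\minvar) = \big(\langle x, p\rangle - \|p\|\sqrt{\eps^2 - e(\minvar)}\,\big)/\|p\|^2$; then substitute $c = \eta(\minvar)$ into $\|p\|^2 c^2 - 2\langle x, p\rangle c + (\|x\|^2 - \eps^2)$ and check that it vanishes, which is precisely $\|x - \eta(\minvar)\,p\|^2 = \eps^2$. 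Put $w \define x - \eta(\minvar)\,p$, so that $x = \eta(\minvar)\,p + w$ with $\|w\| = \eps$.

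For the upper bound I would note that for every $\maxvar \in \setlambda$, Cauchy--Schwarz ($\langle \maxvar , w\rangle \leq \|\maxvar\|\,\|w\| = \eps\|\maxvar\|$) gives
\[
\langle \maxvar , x\rangle - \eps\|\maxvar\| \;=\; \eta(\minvar)\,\langle \maxvar , p\rangle + \langle \maxvar , w\rangle - \eps\|\maxvar\| \;\leq\; \eta(\minvar)\,\langle \maxvar , p\rangle .
\]
Since $\maxvar \in \setlambda$ forces the left-hand side to be strictly positive and $\eta(\minvar) > 0$, this simultaneously yields $\langle \maxvar , p\rangle > 0$ for free. Writing $\beta \define \langle \maxvar , p\rangle > 0$ and $c \define \eta(\minvar) > 0$,
\[
\lagrangian \;=\; 2\sqrt{\langle \maxvar , x\rangle - \eps\|\maxvar\|} \,-\, \beta \;\leq\; 2\sqrt{c\,\beta} - \beta \;=\; c - \big(\sqrt{c} - \sqrt{\beta}\,\big)^2 \;\leq\; c ,
\]
so $\sup_{\maxvar \in \setlambda} \lagrangian \leq \eta(\minvar)$; note $\setlambda \neq \emptyset$ since $x \in \setlambda$ (because $\|x\| > \eps$), so the supremum is over a nonempty set.

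For the matching lower bound I would distinguish whether $\minvar$ lies in the interior of the cone. If $e(\minvar) < \eps^2$, set $\maxvar^\star \define \big(\eta(\minvar)/(\|p\|\sqrt{\eps^2 - e(\minvar)})\big)\, w$; using $\langle w , p\rangle = \langle x, p\rangle - \eta(\minvar)\|p\|^2 = \|p\|\sqrt{\eps^2 - e(\minvar)}$, $\|w\| = \eps$, and $\langle w , x\rangle = \eta(\minvar)\langle w , p\rangle + \eps^2$, a direct computation gives $\langle \maxvar^\star , p\rangle = \eta(\minvar)$ and $\langle \maxvar^\star , x\rangle - \eps\|\maxvar^\star\| = \eta(\minvar)^2 > 0$, so $\maxvar^\star \in \setlambda$ and $L(\maxvar^\star , \minvar) = 2\eta(\minvar) - \eta(\minvar) = \eta(\minvar)$; that is, the supremum is attained. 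If $e(\minvar) = \eps^2$ (the boundary of $\hcone$), then $w \perp p$, $\|w\| = \eps$, $\eta(\minvar) = \langle x, p\rangle/\|p\|^2$, and I would instead use the maximizing sequence $\maxvar_m \define m\, w + \big(\eta(\minvar)/\|p\|^2\big)\, p$: one has $\langle \maxvar_m , p\rangle = \eta(\minvar)$ and, writing $A - B = (A^2 - B^2)/(A + B)$ with $A = m\eps^2$ and $B = \eps\sqrt{m^2\eps^2 + \eta(\minvar)^2/\|p\|^2}$, one finds $\langle \maxvar_m , x\rangle - \eps\|\maxvar_m\| = \eta(\minvar)^2 - \eps^2\eta(\minvar)^2/\big(\|p\|^2(A + B)\big)$, which is positive for all large $m$ and tends to $\eta(\minvar)^2$; hence $L(\maxvar_m , \minvar) \to \eta(\minvar)$. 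In either case $\sup_{\maxvar \in \setlambda} L(\maxvar , \minvar) \geq \eta(\minvar)$, and combined with the upper bound the lemma follows.

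The only genuine obstacle is the algebraic identity $\|x - \eta(\minvar)\phi(\minvar)\| = \eps$: once it is available, both the upper bound and the positivity $\langle \maxvar , \phi(\minvar)\rangle > 0$ fall out of one application of Cauchy--Schwarz and the completion of a square. The remaining point requiring care is that on the boundary $e(\minvar) = \eps^2$ the supremum is not attained, so an explicit maximizing sequence must replace the maximizer — which is also the reason Proposition \ref{proposition:max-problem-solution}(ii) confines the existence of a maximizer to $\interior(\hcone)$.
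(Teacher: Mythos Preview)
Your proof is correct and self-contained, but proceeds along a genuinely different route from the paper. The paper's argument invokes \cite[Lemma~36 and (51)--Lemma~35]{Sheriff2020NovelProblems} to identify the supremum in \eqref{eq:etah-max-problem} with the auxiliary scalar problem \mmode{\min\{\theta \geq 0 : \|x - \theta\phi(h)\| \leq \eps\}}, and then solves that minimum by picking the smaller root of the quadratic \mmode{\|x - \theta\phi(h)\|^2 = \eps^2}. You instead work directly with the Lagrangian: once the identity \mmode{\|x - \eta(h)\phi(h)\| = \eps} is in hand (which the paper also records, as \eqref{eq:eta-quadratic-equation}), a single application of Cauchy--Schwarz plus completion of a square yields the upper bound \mmode{L(\lambda,h) \leq \eta(h)} uniformly over \mmode{\setlambda}, and you supply either the explicit maximizer \mmode{\lambda^\star} (when \mmode{e(h) < \eps^2}) or a maximizing sequence (when \mmode{e(h) = \eps^2}) for the matching lower bound. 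What your approach buys is full self-containment---no appeal to the companion paper---and a transparent explanation of why the supremum fails to be attained on the boundary of \mmode{\hcone}, which dovetails with Proposition~\ref{proposition:max-problem-solution}(ii). What the paper's approach buys is brevity, since the duality between the sup over \mmode{\setlambda} and the constrained scalar minimum is delegated to prior work.
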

\begin{proof}[Lemma \ref{lemma:max-problem-optimal-value}]
We now recall from \citep[Lemma 36, and (51)-Lemma 35]{Sheriff2020NovelProblems}, that the maximal value of \eqref{eq:etah-max-problem} is bounded if and only if the following minimum exists 
\begin{equation}
\label{eq:etah-alternate-definition}
\min \ \big\{ \theta \geq 0 : \norm{ \measurement - \theta \phih{} } \leq {\eps} \big\} .
\end{equation}
Clearly, the minimum in \eqref{eq:etah-alternate-definition} exists whenever the minimization problem is feasible. Suppose there exists some \mmode{\theta' \geq 0} such that \mmode{\norm{ \measurement - \theta' \phih{} } \leq {\eps} }, it is immediately seen that
\[
\begin{cases}
\begin{aligned}
\ipxphih{} \; &\geq \; \frac{1}{2\theta'} \Big( \xminuseps + \theta'^2 \normphih{}^2 \Big) \; > \; 0 , \text{ and} \\
\eh{} \; &= \; \min_{\theta \in \R{}} \; \norm{ \measurement - \theta \phih{} }^2 \; \leq \; \norm{ \measurement - \theta' \phih{} }^2 \leq {\eps}^2 .
\end{aligned}
\end{cases}
\]
Thus, \mmode{\minvar \in \hcone}. On the contrary, if \mmode{\minvar \in \hcone}, then it also seen similarly that \mmode{\theta' = \frac{\ipxphih{}}{\normphih{}^2}} is feasible for \eqref{eq:etah-alternate-definition}. Thus, the maximal value of \eqref{eq:etah-max-problem}, and the minimum in \eqref{eq:etah-alternate-definition} is finite if and only if \mmode{\minvar \in \hcone}.

It is immediately realised that the value of the minimum in \eqref{eq:etah-alternate-definition} corresponds to the smaller root of the quadratic equation \mmode{\norm{ \measurement - \theta \phih{} }^2 = \eps^2}. Dividing throughout therein by \mmode{\theta^2}, we obtain a different quadratic equation
\[
\frac{1}{\theta^2} \xminuseps \; - \; \frac{2}{\theta} \ipxphih{} \; + \; \normphih{}^2 = 0 .
\]
Selecting the larger root (and hence smaller \mmode{\theta}) gives us that for every \mmode{\minvar \in \hcone}, the optimal value of \eqref{eq:etah-max-problem} (and \eqref{eq:etah-alternate-definition}) is
\[
\frac{\norm{\measurement}^2 - \eps^2}{\ipxphih{} \; + \; \normphih{} \sqrt{ \eps^2 - \eh{}  } }  = \etah{} .
\]
Alternatively, if one selects the smaller root of the quadratic equation \mmode{\norm{ \measurement - \theta \phih{} }^2 = \eps^2}, one gets the expression of eta provided in \eqref{eq:eta-alternate-value}.
\end{proof}

\begin{remark}[Quadratic equation for \mmode{\etah{}}]\rm{
It is apparent from the proof of the Lemma \ref{lemma:max-problem-optimal-value} that for any \mmode{\minvar \in \hcone}, \mmode{\etah{}} satisfies
\begin{equation}
\label{eq:eta-quadratic-equation}
\norm{\measurement - \etah{} \linmap (\minvar) } = \eps .
\end{equation}
For \mmode{\minvar \in \hcone}, since \mmode{\ipxphih{} \geq 0}, we see that the quadratic equation \mmode{ \norm{\measurement - \theta \linmap (\minvar) }^2 = \eps^2 } has two positive real roots. Moreover, \mmode{\etah{}} is the smallest positive root of this quadratic equation, which gives
\begin{equation}
\label{eq:eta-alternate-value}
\etah{} = \frac{\ipxphih{} \; - \; \normphih{} \sqrt{ \eps^2 - \eh{}  } }{\normphih{}^2} \quad \text{ for every } \minvar \in \hcone . 
\end{equation}
}\end{remark}

\begin{lemma}
\label{lemma:eta-equals-sqrt-term}
Suppose, \mmode{\maxvar(\minvar)} is an optimal solution to the maximization problem \eqref{eq:etah-max-problem}, then it also satisfies
\begin{equation}
\label{eq:eta-equals-sqrt-term}
\etah{} \; = \; \lagrangianh{} \; = \; \sqtermh{} .  
\end{equation}
\end{lemma}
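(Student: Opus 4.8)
The plan is to leverage the positive homogeneity of the two terms in $\lagrangian = 2\sqfunc - \inprod{\maxvar}{\linmap (\minvar)}$ together with the fact that $\setlambda$ is a cone. Since a maximizer $\maxvar(\minvar)$ is assumed to exist, Lemma \ref{lemma:max-problem-unboundedness} forces $\minvar \in \hcone$, and Lemma \ref{lemma:max-problem-optimal-value} identifies the (finite) optimal value as $\etah{}$; hence the equality $\lagrangianh{} = \etah{}$ is immediate, and the remaining task is to show $\lagrangianh{} = \sqtermh{}$, or equivalently $\sqterm\big|_{\maxvar = \maxvar(\minvar)} = \inprod{\maxvar(\minvar)}{\linmap (\minvar)}$.

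To this end I would restrict the objective to the ray through $\maxvar(\minvar)$. The set $\setlambda = \{\maxvar : \inprod{\maxvar}{\measurement} - \eps \normlambda > 0\}$ is invariant under multiplication by any $t > 0$, so $t\,\maxvar(\minvar) \in \setlambda$ for every $t > 0$, and optimality of $\maxvar(\minvar)$ says that $t = 1$ maximizes $g(t) \define \lagrangian\big|_{\maxvar = t\,\maxvar(\minvar)}$ over $t \in (0, +\infty)$. Using $\inprod{t\maxvar}{\measurement} - \eps\norm{t\maxvar} = t\big(\inprod{\maxvar}{\measurement} - \eps\normlambda\big)$, the first term is positively homogeneous of degree $\tfrac12$ and the second of degree $1$, so with $a \define \sqterm\big|_{\maxvar = \maxvar(\minvar)}$ and $b \define \inprod{\maxvar(\minvar)}{\linmap (\minvar)}$ one gets
\[
g(t) \; = \; 2\sqrt{t}\,a \; - \; t\,b , \qquad t > 0 ,
\]
where $a > 0$ because $\maxvar(\minvar) \in \setlambda$ makes the radicand strictly positive.

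A short case analysis then closes the argument. If $b \le 0$, then $g(t) \to +\infty$ as $t \to +\infty$, contradicting that $g$ attains its maximum at $t = 1$ (equivalently, contradicting boundedness). Hence $b > 0$, $g$ is strictly concave on $(0, +\infty)$, and its unique maximizer solves $g'(t) = a/\sqrt{t} - b = 0$, i.e.\ $t^\ast = (a/b)^2$. Since $t^\ast = 1$, we obtain $a = b$, and therefore
\[
\lagrangianh{} \; = \; g(1) \; = \; 2a - b \; = \; a \; = \; \sqtermh{} ,
\]
which, combined with $\lagrangianh{} = \etah{}$ from Lemma \ref{lemma:max-problem-optimal-value}, yields the full chain $\etah{} = \lagrangianh{} = \sqtermh{}$.

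The only delicate point is the reduction to the scalar problem: one must use that $\setlambda$ is a cone (so scaling preserves feasibility) and that existence of a maximizer excludes the unbounded regime $b \le 0$; the rest is elementary calculus on $g$. A more pedestrian alternative would be to substitute the closed form \eqref{eq:lambdah-value} for $\maxvar(\minvar)$ into $\lagrangian$ and simplify directly, but that presupposes $\minvar \in \interior(\hcone)$ and involves heavier algebra, whereas the homogeneity argument applies to any optimal $\maxvar(\minvar)$ and any $\minvar \in \hcone$ that admits one.
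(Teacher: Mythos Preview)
Your argument is correct and takes a genuinely different route from the paper. The paper computes the full first-order optimality condition
\[
0 \;=\; \frac{\partial}{\partial \maxvar} \lagrangianh{} \;=\; \frac{\measurement - \tfrac{\eps}{\normlambdah} \maxvarh{}}{\sqtermh{}} \;-\; \phih{} ,
\]
and then takes the inner product of this identity with \mmode{\maxvarh{}} to obtain \mmode{\sqtermh{} = \inprod{\maxvarh{}}{\phih{}}}, from which the chain \eqref{eq:eta-equals-sqrt-term} follows as in your last display. Your approach bypasses the full gradient entirely: by exploiting that \mmode{\setlambda} is a cone and that the two pieces of \mmode{\lagrangian} are positively homogeneous of degrees \mmode{\tfrac12} and \mmode{1}, you reduce to the scalar problem \mmode{g(t) = 2\sqrt{t}\,a - tb} on the ray through \mmode{\maxvarh{}}, and the stationarity \mmode{g'(1)=0} already gives \mmode{a=b}. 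This is more elementary and more robust (it needs no differentiability of \mmode{\maxvar \mapsto \normlambda} at the maximizer, only the one-dimensional derivative along the ray). The paper's route, on the other hand, has the advantage that the full first-order condition it writes down is immediately reused in the proof of Proposition~\ref{proposition:max-problem-solution} to derive the explicit formula \eqref{eq:lambdah-value} for \mmode{\maxvarh{}}; your scalar argument does not give that extra information.
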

\begin{proof}[Lemma \ref{lemma:eta-equals-sqrt-term}]
First of all, we observe that the maximization problem \eqref{eq:etah-max-problem} admits an optimal solution if and only if it satisfies the first order optimality conditions:
\[
0 \; = \; \frac{\partial}{\partial \maxvar} \lagrangianh{} \; = \; \frac{\measurement - \frac{\eps}{\norm{\maxvarh{}}} \maxvarh{} }{\sqtermh{}} \; - \; \phih{} .
\]
By taking inner product throughout with \mmode{\maxvarh{}}, it is readily seen that
\[
\sqtermh{} \; = \; \inprod{\maxvarh{}}{\phih{}} .
\]
Thus, we have
\begin{align*}
\etah{} 
& = \lagrangianh{} \quad \text{from Lemma \ref{lemma:max-problem-optimal-value}}, \\
& = 2 \sqtermh{} - \inprod{\maxvarh{}}{\phih{}} \\
& = \sqtermh{} .    
\end{align*} 
\end{proof}

\begin{proof}[Proof of Proposition \ref{proposition:max-problem-solution}]
Lemma \ref{lemma:max-problem-unboundedness} and \ref{lemma:max-problem-optimal-value} together imply assertion (i) of the proposition. To complete the proof of the proposition, it now only remains to be shown that the maximization problem \eqref{eq:etah-max-problem} admits a unique optimal solution \mmode{\maxvarh{}} if and only if \mmode{\minvar \in \interior (\hcone)}.

Firstly, we observe that the maximization problem \eqref{eq:etah-max-problem} admits an optimal solution if and only if it satisfies the first order optimality conditions: \mmode{0 = \frac{\partial}{\partial \maxvar} \lagrangianh{} }, rearranging terms, we obtain
\begin{equation}
\label{eq:optimality-condition}
\frac{\eps}{\norm{\maxvarh{}}} \maxvarh{} \; = \; \measurement \; - \; \sqtermh{} \; \phih{} \; = \; \measurement \; - \; \etah{} \phih{},\footnote{Observe that by evaluating squared norm on both sides of \eqref{eq:optimality-condition} and using \eqref{eq:eta-equals-sqrt-term} also gives rise to the quadratic equation \mmode{\eps^2 = \norm{\xminusetaphih{}}^2} for \mmode{\etah{}}.}
\end{equation}
the last equality is due to \eqref{eq:eta-equals-sqrt-term}. Now, we observe that any \mmode{\maxvarh{}} that satisfies the implicit non-linear equation \eqref{eq:optimality-condition} must be of the form \mmode{\maxvarh{} = \scaling \big( \xminusetaphih{} \big) } for some \mmode{\scaling > 0}. The precise value of \mmode{r > 0} can be computed using \eqref{eq:eta-equals-sqrt-term}. We have
\[
\begin{aligned}
\etah{} & = \sqtermh{} \\
& = \sqrt{\scaling} \sqrt{ \inprod{ \xminusetaphih{} }{\measurement} - \eps \norm{\xminusetaphih{}} } \\
& = \sqrt{\scaling} \sqrt{ \norm{\xminusetaphih{}}^2 + \inprod{\xminusetaphih{}}{\etah{} \phih{}} - \eps^2 } \\
& = \sqrt{\scaling \etah{} } \sqrt{ \ipxphih{} - \etah{} \normphih{}^2 } ,
\end{aligned}
\]
from which it is easily picked that \mmode{\scaling \; = \; \ipxphih{} - \etah{} \normphih{}^2 \; = \; \normphih{} \sqrt{ \eps^2 - \eh{} } }. Now, \mmode{\scaling > 0} if and only if \mmode{\eh{} < \eps^2}, or equivalently, \mmode{\minvar \in \interior (\hcone)}. Thus, we finally conclude that the optimality condition \eqref{eq:optimality-condition} has a unique solution \mmode{\maxvarh{}} if and only if \mmode{\minvar \in \interior (\hcone)}, and is given by
\begin{equation*}
\maxvar (\minvar) = \frac{\etah{}}{\normphih{} \sqrt{\epsminuse{} }}  \big( \xminusetaphih{} \big) .
\end{equation*}
The proof of the proposition is complete.
\end{proof}

\begin{lemma}
\label{lemma:some-key-relations}
For every \mmode{\minvar \in \hcone}, the following relations hold
\begin{equation}
\label{eq:eta-phih-lower-bound}
\etah{} \normphih{} \; \geq \; \normx - \eps ,
\end{equation}
\begin{equation}
\label{eq:formaulas-xminusetaphi-equals}
\inprod{\xminusetaphih{}}{\phih{}} \; = \; \normphih{} \sqrt{\eps^2 - \eh{}}.
\end{equation}

\end{lemma}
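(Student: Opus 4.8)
The plan is to read off both identities directly from the closed form \eqref{eq:etah-definition} for \mmode{\etah{}}, using as the single workhorse the elementary relation
\[
\ipxphih{} \; = \; \normphih{} \, \sqrt{\normx^2 - \eh{}}, \qquad \minvar \in \hcone ,
\]
which is nothing but the definition \eqref{eq:e-func-def} of \mmode{\eh{}} solved for \mmode{\ipxphih{}}, the positive square root being the correct branch because \mmode{\ipxphih{} > 0} on \mmode{\hcone}. I would record at the outset that \mmode{0 \le \eh{} \le \eps^2} for \mmode{\minvar \in \hcone} by the defining inequality in \eqref{eq:hcone-definition}, so that \mmode{\sqrt{\epsminuse{}}} makes sense, and that \mmode{\phih{} \neq 0} there (again since \mmode{\ipxphih{} > 0}), so every division performed below is legitimate.

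For the identity \eqref{eq:formaulas-xminusetaphi-equals}, I would rationalize \eqref{eq:etah-definition} by multiplying numerator and denominator by \mmode{\ipxphih{} - \normphih{}\sqrt{\epsminuse{}}}. The denominator becomes \mmode{\abs{\ipxphih{}}^2 - \normphih{}^2 \big(\epsminuse{}\big)}, and substituting \mmode{\abs{\ipxphih{}}^2 = \normphih{}^2(\normx^2 - \eh{})} from the workhorse relation collapses it to \mmode{\normphih{}^2 \, \xminuseps}, which cancels against the numerator. This produces \mmode{\etah{}\,\normphih{}^2 = \ipxphih{} - \normphih{}\sqrt{\epsminuse{}}} -- exactly the alternative expression \eqref{eq:eta-alternate-value} already recorded in the excerpt -- and rearranging yields \mmode{\inprod{\xminusetaphih{}}{\phih{}} = \ipxphih{} - \etah{}\normphih{}^2 = \normphih{}\sqrt{\epsminuse{}}}, as desired.

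For the lower bound \eqref{eq:eta-phih-lower-bound}, I would multiply \eqref{eq:etah-definition} through by \mmode{\normphih{}} and factor \mmode{\xminuseps = (\normx-\eps)(\normx+\eps)}, obtaining
\[
\etah{}\,\normphih{} \; = \; (\normx - \eps)\cdot\frac{(\normx + \eps)\,\normphih{}}{\ipxphih{} + \normphih{}\sqrt{\epsminuse{}}} .
\]
Since \mmode{\normx > \eps} by Assumption \ref{assumption:lip-main}, it then suffices to show the fraction is at least \mmode{1}, i.e. that \mmode{(\normx+\eps)\normphih{} \ge \ipxphih{} + \normphih{}\sqrt{\epsminuse{}}}. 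Dividing by \mmode{\normphih{} > 0} and invoking the workhorse relation once more, this reduces to the scalar inequality \mmode{\normx + \eps \ge \sqrt{\normx^2 - \eh{}} + \sqrt{\epsminuse{}}}, which is immediate term by term from \mmode{\eh{} \ge 0}, since \mmode{\sqrt{\normx^2 - \eh{}} \le \normx} and \mmode{\sqrt{\epsminuse{}} \le \eps}.

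I do not anticipate a genuine obstacle: the lemma is a bookkeeping consequence of the explicit formula for \mmode{\etah{}}, and the only point demanding a moment's attention is to confirm that the degenerate case \mmode{\phih{} = 0} is excluded on \mmode{\hcone} -- which it is, by the strict inequality \mmode{\ipxphih{} > 0} -- so that the rationalization and the divisions above are all valid.
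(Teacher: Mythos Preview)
Your proof is correct and follows essentially the same route as the paper: for \eqref{eq:eta-phih-lower-bound} both arguments bound the denominator of \eqref{eq:etah-definition} by \mmode{(\normx + \eps)\normphih{}} using \mmode{\ipxphih{}/\normphih{} \le \normx} (you via the identity \mmode{\ipxphih{} = \normphih{}\sqrt{\normx^2 - \eh{}}}, the paper via Cauchy--Schwarz, which amounts to the same thing since \mmode{\eh{}\ge 0}) together with \mmode{\sqrt{\epsminuse{}}\le\eps}; and for \eqref{eq:formaulas-xminusetaphi-equals} both arguments rearrange the alternative expression \eqref{eq:eta-alternate-value}, your rationalization being a clean way to arrive at it.
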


\begin{proof}
For any \mmode{\minvar \in \hcone}, we see from \eqref{eq:etah-definition} that
\[
\begin{aligned}
\frac{1}{\etah{}} 
&= \frac{\normphih{}}{\xminuseps} \Big( \frac{\ipxphih{}}{\normphih{}} + \sqrt{\epsminuse{}} \Big) \\
&< \frac{\normphih{}}{\xminuseps} \big( \normx + \eps \big) \quad \text{due to C-S inequality, and } 0 \leq \eh{} , \\
&= \frac{\normphih{}}{\normx - \eps} ,
\end{aligned}
\]
On rearranging terms, the inequality \eqref{eq:eta-phih-lower-bound} is obtained at once. Similarly, rearranging terms in \eqref{eq:eta-quadratic-equation}, we see
\[
\normphih{} \sqrt{\eps^2 - \eh{}} \; = \; \ipxphih{} - \etah{} \normphih{}^2 ,
\]
Observing that \mmode{\ipxphih{} - \etah{} \normphih{}^2 = \; \inprod{\xminusetaphih{}}{\phih{}} }, \eqref{eq:formaulas-xminusetaphi-equals} follows immediately.
\end{proof}


\begin{lemma}[Derivative of \mmode{\maxvarh{}}]
\label{lemma:derivative-of-lambdah}
Let \mmode{\measurement, \linmap , \eps} be given such that Assumption \ref{assumption:lip-main} holds, then the mapping \mmode{\interior (\hcone) \ni \minvar \longmapsto \maxvarh{}} is continuously differentiable. Moreover, with the continuous maps \mmode{\interior(\hcone) \ni \minvar \longmapsto \big( r(\minvar), \; M(\minvar) \big) \in (0, +\infty) \times \R{\measdim \times \measdim} } defined as
\begin{equation}
\label{eq:Mh-definition}
\begin{cases}
\begin{aligned}
r(\minvar) &\define \frac{2\eps}{\normlambdah{}} + \frac{\xminuseps}{\eta^2(\minvar)}, \quad \text{and } \\
M(\minvar) &\define \frac{\normlambdah{}}{ \eps \etah{}} \Big( \eta^2(\minvar) \identity{\measdim} \; + \; r(h) \big( \maxvarh{} \maxvar\transp (\minvar) \big) - \big( \maxvarh{} x\transp + x\maxvar\transp (\minvar) \big)  \Big) ,
\end{aligned}
\end{cases}
\end{equation}
the derivative of \mmode{\minvar \longmapsto \maxvarh{}} is the linear map \mmode{\gradlambdah : \hilbert \longrightarrow \R{\measdim}} given by
\begin{equation}
\label{eq:derivative-of-lambdah}
\gradlambdah (v) \; = \; - M(\minvar) \cdot \linmap (v) \text{ for all } v \in \hilbert .
\end{equation}
\end{lemma}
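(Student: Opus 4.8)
The plan is to first establish that \mmode{\interior(\hcone) \ni \minvar \longmapsto \maxvarh{}} is \mmode{C^1}, and then to read off its derivative by differentiating the two identities that characterise the pair \mmode{(\maxvarh{}, \etah{})}. For differentiability, the closed form \eqref{eq:lambdah-value} exhibits \mmode{\maxvarh{}} as a product and composition of \mmode{\minvar \longmapsto \etah{}} (which is \mmode{C^2} on \mmode{\interior(\hcone)} by Proposition \ref{prop:eta-derivatives}), the linear map \mmode{\minvar \longmapsto \phih{}}, and the scalar map \mmode{\minvar \longmapsto \normphih{}\sqrt{\eps^{2} - \eh{}}} (which is \mmode{C^1} and strictly positive on \mmode{\interior(\hcone)}); hence \mmode{\maxvar(\cdot)} is \mmode{C^1}, and so are the maps \mmode{\minvar \longmapsto \rh} and \mmode{\minvar \longmapsto \Mh}, with \mmode{\rh > 0} because \mmode{\normx > \eps}. (Alternatively, one may apply the implicit function theorem to the optimality condition \eqref{eq:optimality-condition}, as indicated below.)

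Next, differentiate along an arbitrary \mmode{v \in \hilbert} the first-order optimality condition \eqref{eq:optimality-condition}, i.e. \mmode{\tfrac{\eps}{\normlambdah}\maxvarh{} = \xminusetaphih{}}, and the squared form of Lemma \ref{lemma:eta-equals-sqrt-term}, i.e. \mmode{\etah{}^{2} = \inprod{\maxvarh{}}{\measurement} - \eps\normlambdah}. Comparing \eqref{eq:eta-derivatives} with \eqref{eq:lambdah-value} gives the identity \mmode{\etagrad{} = -\linadj(\maxvarh{})}, which lets the derivatives of the right-hand sides be expressed purely through \mmode{\maxvarh{}} and \mmode{\linmap(v)}, producing the linear system
\begin{equation*}
\frac{\eps}{\normlambdah}\Big( \identity{\measdim} - \frac{\maxvarh{}\,\maxvar\transp (\minvar)}{\normlambdah^{2}} \Big)\gradlambdah(v) = \inprod{\maxvarh{}}{\linmap(v)}\,\phih{} - \etah{}\,\linmap(v), \qquad \inprod{\phih{}}{\gradlambdah(v)} = -2\inprod{\maxvarh{}}{\linmap(v)} .
\end{equation*}
The operator on the left of the first equation equals \mmode{\tfrac{\eps}{\normlambdah}} times the orthogonal projection onto \mmode{\maxvarh{}^{\perp}}, so that equation determines \mmode{\gradlambdah(v)} up to its component along \mmode{\maxvarh{}}, and the scalar equation then fixes that remaining component since \mmode{\inprod{\maxvarh{}}{\phih{}} = \etah{} > 0} by Lemma \ref{lemma:eta-equals-sqrt-term}. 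Hence \mmode{\gradlambdah(v)} is uniquely determined, and substituting the auxiliary scalars \mmode{\inprod{\maxvarh{}}{\measurement} = \etah{}^{2} + \eps\normlambdah} and \mmode{\measurement = \tfrac{\eps}{\normlambdah}\maxvarh{} + \etah{}\phih{}} and collecting terms gives \mmode{\gradlambdah(v) = -\Mh\cdot\linmap(v)} with \mmode{\Mh} as in \eqref{eq:Mh-definition}.

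I expect the main obstacle to be precisely this last algebraic step — resolving the two displayed equations into the single closed form \mmode{\Mh} and matching the coefficients of \mmode{\identity{\measdim}}, \mmode{\maxvarh{}\maxvar\transp (\minvar)} and \mmode{\maxvarh{}\measurement\transp + \measurement\maxvar\transp (\minvar)} with the definition \eqref{eq:Mh-definition}. A clean way to bypass solving the system is to verify directly that \mmode{D^{\star} := -\Mh\cdot\linmap(v)} satisfies both equations and then invoke the uniqueness just established; this reduces everything to two short identities, using only \mmode{\big(\identity{\measdim} - \normlambdah^{-2}\maxvarh{}\maxvar\transp (\minvar)\big)\maxvarh{} = 0} and \mmode{\big(\identity{\measdim} - \normlambdah^{-2}\maxvarh{}\maxvar\transp (\minvar)\big)\phih{} = \phih{} - \etah{}\normlambdah^{-2}\maxvarh{}} together with the scalar relations above. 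As an alternative to the whole argument, set \mmode{F(\maxvar,\minvar) := \tfrac{\eps}{\normlambda}\maxvar - \measurement + \sqrt{\inprod{\maxvar}{\measurement} - \eps\normlambda}\,\phih{}}; then \eqref{eq:optimality-condition} and Lemma \ref{lemma:eta-equals-sqrt-term} give \mmode{F(\maxvarh{},\minvar) = 0}, a short computation at \mmode{\maxvar = \maxvarh{}} gives \mmode{\partial_{\maxvar}F = \tfrac{\eps}{\normlambdah}\big(\identity{\measdim} - \normlambdah^{-2}\maxvarh{}\maxvar\transp (\minvar)\big) + \tfrac12\phih{}\phih{}\transp} and \mmode{\partial_{\minvar}F = \etah{}\,\linmap}, and checking \mmode{\tfrac{1}{\etah{}}(\partial_{\maxvar}F)\,\Mh = \identity{\measdim}} both proves invertibility of \mmode{\partial_{\maxvar}F} and yields \eqref{eq:derivative-of-lambdah} via \mmode{\gradlambdah = -\etah{}\,(\partial_{\maxvar}F)^{-1}\linmap}, while continuity of \mmode{\minvar \longmapsto (\rh,\Mh)} is immediate from that of \mmode{\etah{}}, \mmode{\maxvarh{}} and linearity of \mmode{\linmap}.
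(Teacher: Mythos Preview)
Your proposal is correct and takes a genuinely different route from the paper's own proof. The paper differentiates the single identity
\[
\inprod{\xminusetaphih{}}{\phih{}}\,\maxvarh{} \;=\; \etah{}\big(\xminusetaphih{}\big),
\]
obtained by clearing the denominator in \eqref{eq:lambdah-value} via \eqref{eq:formaulas-xminusetaphi-equals}. The scalar coefficient on the left equals \mmode{\eps\etah{}/\normlambdah > 0} on \mmode{\interior(\hcone)}, so after differentiating one can solve for \mmode{\gradlambdah(v)} directly and then simplify to obtain \mmode{\Mh}. Your approach instead differentiates the normalised optimality condition \eqref{eq:optimality-condition} together with the scalar identity \mmode{\etah{}^2 = \inprod{\maxvarh{}}{\measurement} - \eps\normlambdah}; because the first yields only the projection of \mmode{\gradlambdah(v)} onto \mmode{\maxvarh{}^\perp}, you need the second equation to fix the remaining component, and you then sidestep solving the system by verifying that \mmode{-\Mh\linmap(v)} satisfies both and invoking uniqueness. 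Your implicit-function-theorem alternative, checking \mmode{\tfrac{1}{\etah{}}(\partial_\maxvar F)\Mh = \identity{\measdim}} directly, is an elegant way to package the same verification.

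What each buys: the paper's choice of identity avoids the rank-deficiency altogether and leads to a single, if somewhat lengthy, chain of algebraic simplifications; your route is conceptually cleaner (it is transparently the IFT applied to the first-order conditions of \eqref{eq:etah-max-problem}) and the ``verify then invoke uniqueness'' device trades forward solving for backward checking, which is often shorter in practice. Both arguments ultimately rely on the same scalar relations \mmode{\inprod{\maxvarh{}}{\phih{}} = \etah{}} and \mmode{\measurement = \tfrac{\eps}{\normlambdah}\maxvarh{} + \etah{}\phih{}}, so the remaining algebra is of comparable weight.
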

\begin{proof}[Lemma \ref{lemma:derivative-of-lambdah}]
First, we rewrite \eqref{eq:lambdah-value} as
\[
\inprod{\xminusetaphih{}}{\phih{}} \maxvarh{} \; = \; \etah{} \big( \xminusetaphih{} \big),
\]
then by differentiating on both sides w.r.t. \mmode{\minvar}, we obtain an equation in the space of linear operators from \mmode{\hilbert} to \mmode{\R{\measdim}}. Evaluating the operators on the both sides of this equation at some \mmode{v \in \hilbert}, we get
\begin{equation}
\label{eq:gradlambdah-calculation-1}
\begin{aligned}
& \inprod{\xminusetaphih{}}{\phih{}} \gradlambdah (v) \; + \; \inprod{ \nabla \Big( \inprod{\xminusetaphih{}}{\phih{}} \Big) }{v} \maxvarh{} \\
& \quad \quad = \ \inprod{\etagrad{}}{v} \measurement \; - \; \eta^2 (\minvar) \linmap (v) \; - \; 2\etah{} \inprod{\etagrad{}}{v} \; \phih{} .
\end{aligned}
\end{equation}
On the one hand, we have
\begin{equation}
\label{eq:gradlambdah-calculation-2}
\begin{aligned}
& \inprod{\etagrad{}}{v} \measurement \; - \; \eta^2 (\minvar) \linmap (v) \; - \; 2\etah{} \inprod{\etagrad{}}{v} \; \phih{} \\
& \quad \quad = \ \inprod{\etagrad{}}{v} \big( \measurement - 2 \etah{} \phih{} \big) \ - \ \eta^2 (\minvar) \linmap (v) \\
& \quad \quad = \ - \inprod{\maxvarh{}}{\linmap(v)} \big( \measurement - 2 \etah{} \phih{} \big) \ - \ \eta^2 (\minvar) \linmap (v) \quad \text{ since } \etagrad{} = - \linadj (\maxvarh{})  \\
& \quad \quad = \ - \Big( \eta^2 (\minvar) \; \identity{\measdim} \ + \ \big( \measurement - 2 \etah{} \phih{} \big) \maxvar\transp (\minvar) \Big) \cdot \linmap (v) \\
& \quad \quad = \ - \Big( \eta^2 (\minvar) \; \identity{\measdim} \ + \ \big( - \measurement + \; \nicefrac{2\eps}{\normlambdah{}} \; \maxvarh{} \big) \maxvar\transp (\minvar) \Big) \cdot \linmap (v) \\
& \quad \quad = \ - \Big( \eta^2 (\minvar) \; \identity{\measdim} \ + \ \nicefrac{2\eps}{\normlambdah{}} \;\big( \maxvarh{} \maxvar\transp (\minvar) \big) \ - \ \big( \measurement \maxvar\transp (\minvar) \big) \Big) \cdot \linmap (v) .
\end{aligned}
\end{equation}
On the other hand, since
\[
\begin{aligned}
\nabla \Big( \inprod{\xminusetaphih{}}{\phih{}} \Big) \ &= \ \linadj (\measurement) \; - \; 2 \etah{} \linadj \big( \phih{} \big) \; - \; \normphih{}^2 \etagrad{} \\
& = \ \linadj \Big( \measurement - 2 \etah{} \phih{} + \normphih{}^2 \maxvarh{} \Big) ,
\end{aligned}
\]
we also have
\begin{equation}
\label{eq:gradlambdah-calculation-3}
\begin{aligned}
& \inprod{ \nabla \Big( \inprod{\xminusetaphih{}}{\phih{}} \Big) }{v} \maxvarh{} \\
& \quad \quad \quad \quad \quad = \ \inprod{\big( \measurement - 2 \etah{} \phih{} + \normphih{}^2 \maxvarh{} \big)}{ \linmap (v) } \maxvarh{} \\
& \quad \quad \quad \quad \quad = \ \Big( \maxvarh{} \big( \measurement - 2 \etah{} \phih{} + \normphih{}^2 \maxvarh{} \big)\transp \Big) \cdot \linmap(v) \\
& \quad \quad \quad \quad \quad = \ \Big( \maxvarh{} \big( - \measurement + \big( \nicefrac{2\eps}{\normlambdah{}} + \normphih{}^2 \big) \maxvarh{} \big)\transp \Big) \cdot \linmap(v) \quad \text{ from \eqref{eq:lambdah-value}} \\
& \quad \quad \quad \quad \quad = \ \Big( - \big( \maxvarh{} x\transp \big) \; + \; \big( \nicefrac{2\eps}{\normlambdah{}} + \normphih{}^2 \big) \big( \maxvarh{} \maxvar\transp (\minvar) \big) \Big) \cdot \linmap (v) .
\end{aligned}
\end{equation}
Collecting \eqref{eq:gradlambdah-calculation-2} and \eqref{eq:gradlambdah-calculation-3}, together with \mmode{\inprod{\xminusetaphih{}}{\phih{}} = \frac{\eps \etah{}}{\normlambdah{}} } (from \eqref{eq:lambdah-value}), \eqref{eq:gradlambdah-calculation-1} simplifies to reveal
\[
\begin{aligned}
\frac{\eps \etah{}}{\normlambdah{}} \gradlambdah (v) \ = \ &- \Big( \eta^2 (\minvar) \identity{\measdim} \; - \; \big( \measurement \maxvar\transp (\minvar) \; + \; \maxvarh{ \measurement\transp} \big) \Big) \cdot \linmap (v) \\
& \quad \quad - \Big( \nicefrac{4\eps}{\normlambdah{}} \; + \; \normphih{}^2 \Big) \big( \maxvarh{} \maxvar\transp (\minvar) \big) \cdot \linmap (v) .
\end{aligned}
\]
Finally, simplifying
\[
\begin{aligned}
\frac{2\eps}{\normlambdah{}} \; + \; \normphih{}^2 \ &= \ \frac{2 \inprod{\xminusetaphih{}}{\phih{}}}{\etah{}} \; + \; \normphih{}^2 \quad \text{from \eqref{eq:lambdah-value}} \\
&= \ \frac{2 \ipxphih{}}{\etah{}} \; - \; \normphih{}^2 \\
&= \ \frac{1}{\eta^2 (\minvar)} \Big( 2\inprod{\measurement}{\etah{} \phih{}} \; - \; \eta^2(\minvar) \normphih{}^2 \Big) \\
&= \ \frac{1}{\eta^2 (\minvar)} \Big( \normx^2 - \norm{\xminusetaphih{}}^2 \Big) \ = \ \frac{\xminuseps}{\eta^2 (\minvar)}.
\end{aligned}
\]
Putting everything together, the derivative \mmode{\gradlambdah (v)} is easily written in terms of the matrix \mmode{\Mh} given in \eqref{eq:Mh-definition} as
\[
\gradlambdah (v) \ = \ \Mh \cdot \linmap (v)
\]
Continuous differentiability of \mmode{\interior (\hcone) \ni \minvar \longmapsto \maxvarh{} \in \R{\measdim}} follows directly from continuity of the map \mmode{\interior (\hcone) \ni \minvar \longmapsto \Mh \in \R{\measdim \times \measdim}}, which is straight forward. The proof of the lemma is complete.
\end{proof}

\begin{proof}[Proof of Proposition \ref{prop:eta-derivatives}]
From assertion (i) of Proposition \ref{proposition:max-problem-solution}, it is inferred that for every \mmode{\minvar \in \hcone}, the value \mmode{\etah{}} is a point-wise maximum of the linear function \mmode{\lagrangian} (linear in \mmode{\minvar}). Thus, the mapping \mmode{ \eta : \hcone \longrightarrow [0,+\infty) } is convex.

From assertion (ii) of Proposition \ref{proposition:max-problem-solution}, it follows that the maximization problem \eqref{eq:etah-max-problem} admits a solution \mmode{\maxvarh{}} if and only if \mmode{\minvar \in \smallhcone}.Then, from Danskin's theorem \citep{bertsekas1971control}, we conclude that the function \mmode{\eta: \hcone \longrightarrow [0, +\infty)} is differentiable if and only if the maximizer \mmode{\maxvarh{}} in \eqref{eq:etah-max-problem} exists. Thus, \mmode{ \eta : \hcone \longrightarrow [0,+\infty) } is differentiable at every \mmode{\minvar \in \interior (\hcone)}, and the derivative is given by \mmode{\etagrad{} = - \linadj \big( \maxvarh{} \big)}. Substituting for \mmode{\maxvarh{}} from \eqref{eq:lambdah-value}, we immediately get \eqref{eq:eta-derivatives}.

Since \mmode{ \etagrad{} = - \linadj\big( \maxvarh{} \big) }, we realise that \mmode{\etah{}} is twice differentiable if and only if the mapping \mmode{\minvar \longmapsto \maxvarh{} } has a well-defined derivative \mmode{\gradlambdah}. In which case, the hessian is a linear operator \mmode{\hessianeta : \hilbert \longrightarrow \hilbert } given by
\[
\hessianeta (v) = - \linadj \circ \gradlambdah (v) \quad \text{ for all } v \in \hilbert.
\]
We know that the derivative \mmode{\gradlambdah} exists for every \mmode{\minvar \in \interior (\hcone)}, thus, \mmode{\eta(\cdot)} is twice differentiable everywhere on \mmode{\interior (\hcone) }. Substituting for \mmode{\gradlambdah} from \eqref{eq:derivative-of-lambdah}, we immediately get
\[
\hessianeta (v) \; = \; \left( \linadj \circ \Mh \circ \linmap \right) (v) \quad \text{ for all } v \in \hilbert ,
\]
where \mmode{\minvar \longmapsto \Mh} is a matrix valued map given in \eqref{eq:Mh-definition}. Moreover, continuity of the hessian i.e., continuity of the map \mmode{\interior (\hcone) \ni \minvar \longmapsto \hessianeta} follows directly from the continuity of \mmode{\interior (\hcone) \ni \minvar \longmapsto \gradlambdah}. The proof in now complete.
\end{proof}

\begin{lemma}[Smallest and largest eigenvalues of \mmode{\Mh}]
\label{lemma:min-max-eig-values-Mh}
For every \mmode{\minvar \in \interior (\hcone)}, consider \mmode{\Mh \in \R{\measdim \times \measdim}} as given in \eqref{eq:Mh-definition}. Then its minimum and maximum eigenvalues, denoted by \mmode{\mineig(\Mh)} and \mmode{\maxeig(\Mh)} respectively, are
\begin{equation}
\label{eq:min-max-eig-value-Mh}
\begin{cases}
\begin{aligned}
\mineig(\Mh) \ &= \ \frac{\xminuseps \normlambdah{}^3}{2 \eps \eta^3(\minvar) } \left( 1 \; - \; \sqrt{ 1 \; - \; \frac{8\eps \eta^6 (\minvar) }{\xminuseps^2 \normlambdah{}^3} } \right) \\
\maxeig(\Mh) \ &= \ \frac{\xminuseps \normlambdah{}^3}{2 \eps \eta^3(\minvar) } \left( 1 \; + \; \sqrt{ 1 \; - \; \frac{8\eps \eta^6 (\minvar) }{\xminuseps^2 \normlambdah{}^3} } \right) .
\end{aligned}
\end{cases}
\end{equation}
\end{lemma}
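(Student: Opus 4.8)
The plan is to exploit the low-rank structure of the symmetric matrix \mmode{\Mh} of \eqref{eq:Mh-definition}. Up to the positive scalar \mmode{\frac{\normlambdah{}}{\eps\,\etah{}}}, it is a symmetric rank-at-most-two perturbation of the scaled identity \mmode{\eta^{2}(\minvar)\identity{\measdim}}, so all but two of its eigenvalues equal \mmode{\frac{\normlambdah{}\,\etah{}}{\eps}}, and the remaining two are roots of an explicit quadratic read off from a \mmode{2\times2} Gram matrix. Concretely, write \mmode{\Mh=\frac{\normlambdah{}}{\eps\,\etah{}}\,N(\minvar)} with \mmode{N(\minvar)\define\eta^{2}(\minvar)\identity{\measdim}+A(\minvar)} and \mmode{A(\minvar)\define r(\minvar)\,\maxvarh{}\maxvar\transp(\minvar)-\big(\maxvarh{}\measurement\transp+\measurement\maxvar\transp(\minvar)\big)}. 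Setting \mmode{B\define[\,\maxvarh{}\ \ \measurement\,]\in\R{\measdim\times2}} and letting \mmode{C} be the \mmode{2\times2} matrix with rows \mmode{(r(\minvar),-1)} and \mmode{(-1,0)}, one has \mmode{A(\minvar)=BCB\transp}, symmetric of rank at most two and supported on \mmode{V\define\Span\{\maxvarh{},\measurement\}}. Hence \mmode{N(\minvar)} acts as \mmode{\eta^{2}(\minvar)\identity{\measdim}} on \mmode{V^{\perp}}, while on \mmode{V} its eigenvalues are \mmode{\eta^{2}(\minvar)+\mu} as \mmode{\mu} runs over the eigenvalues of the \mmode{2\times2} matrix \mmode{CG}, where \mmode{G\define B\transp B} is the Gram matrix of \mmode{(\maxvarh{},\measurement)}; this uses the standard fact that \mmode{BCB\transp} and \mmode{CB\transp B} share the same nonzero eigenvalues.

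Next, using the crucial identity \mmode{\inprod{\maxvarh{}}{\measurement}=\eta^{2}(\minvar)+\eps\,\normlambdah{}}, which follows from \mmode{\etah{}=\sqtermh{}} in Lemma \ref{lemma:eta-equals-sqrt-term}, together with \mmode{r(\minvar)=\frac{2\eps}{\normlambdah{}}+\frac{\xminuseps}{\eta^{2}(\minvar)}} from \eqref{eq:Mh-definition}, a routine computation gives
\begin{gather*}
\trace(CG)=r(\minvar)\normlambdah{}^{2}-2\inprod{\maxvarh{}}{\measurement}=\frac{\xminuseps\,\normlambdah{}^{2}}{\eta^{2}(\minvar)}-2\eta^{2}(\minvar),\\
\det(CG)=\inprod{\maxvarh{}}{\measurement}^{2}-\normlambdah{}^{2}\normx^{2}=\eta^{4}(\minvar)+2\eps\,\eta^{2}(\minvar)\normlambdah{}-\normlambdah{}^{2}\,\xminuseps .
\end{gather*}
An eigenvalue \mmode{\sigma} of \mmode{\Mh} coming from \mmode{V} satisfies \mmode{\frac{\eps\,\etah{}}{\normlambdah{}}\,\sigma=\eta^{2}(\minvar)+\mu} with \mmode{\mu} a root of \mmode{\mu^{2}-\trace(CG)\,\mu+\det(CG)=0}; substituting \mmode{\mu=\frac{\eps\,\etah{}}{\normlambdah{}}\sigma-\eta^{2}(\minvar)} one checks, using the two displayed identities, that the coefficient \mmode{2\eta^{2}(\minvar)+\trace(CG)} collapses to \mmode{\frac{\xminuseps\,\normlambdah{}^{2}}{\eta^{2}(\minvar)}} and the constant \mmode{\eta^{4}(\minvar)+\eta^{2}(\minvar)\trace(CG)+\det(CG)} collapses to \mmode{2\eps\,\eta^{2}(\minvar)\normlambdah{}}, so that after clearing denominators the two eigenvalues are the roots of
\[
\sigma^{2}-\frac{\xminuseps\,\normlambdah{}^{3}}{\eps\,\eta^{3}(\minvar)}\,\sigma+\frac{2\normlambdah{}^{3}}{\eps}=0 .
\]
Denoting by \mmode{S,P} the coefficient of \mmode{-\sigma} and the constant term, one has \mmode{4P/S^{2}=\frac{8\eps\,\eta^{6}(\minvar)}{\xminuseps^{2}\normlambdah{}^{3}}}, so the quadratic formula \mmode{\sigma=\frac{S}{2}\big(1\pm\sqrt{1-4P/S^{2}}\big)} yields exactly \eqref{eq:min-max-eig-value-Mh} (that the radicand is nonnegative is automatic, since \mmode{\Mh} is symmetric and hence has real eigenvalues).

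Finally I would check that these two roots are genuinely \mmode{\mineig(\Mh)} and \mmode{\maxeig(\Mh)}, i.e., that they do not interlace the eigenvalue \mmode{\frac{\normlambdah{}\,\etah{}}{\eps}} contributed by \mmode{V^{\perp}}. By the Cauchy--Schwarz inequality \mmode{\det(CG)=\inprod{\maxvarh{}}{\measurement}^{2}-\normlambdah{}^{2}\normx^{2}\le0}, so the two eigenvalues \mmode{\mu} of \mmode{CG} satisfy \mmode{\mu_{\min}\le0\le\mu_{\max}}; hence the eigenvalues \mmode{\eta^{2}(\minvar)+\mu} of \mmode{N(\minvar)} on \mmode{V} sit on opposite sides of the \mmode{V^{\perp}}-eigenvalue \mmode{\eta^{2}(\minvar)}, and multiplying by the positive factor \mmode{\frac{\normlambdah{}}{\eps\,\etah{}}} preserves this. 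I do not expect a genuine obstacle; the only delicate points are (i) carrying out the algebraic collapses above cleanly, which rests entirely on \mmode{\inprod{\maxvarh{}}{\measurement}=\eta^{2}(\minvar)+\eps\,\normlambdah{}}, and (ii) the degenerate configuration \mmode{\eh{}=0}, where \mmode{\measurement} and \mmode{\phih{}} are collinear, \mmode{V} is one-dimensional and \mmode{\det(CG)=0}, so one root of the quadratic coincides with \mmode{\frac{\normlambdah{}\,\etah{}}{\eps}} and the conclusion still holds --- this case can alternatively be dispatched by continuity of \mmode{\interior(\hcone)\ni\minvar\longmapsto\Mh}.
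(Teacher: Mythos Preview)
Your proposal is correct and follows essentially the same strategy as the paper: both exploit that \mmode{\Mh} is a rank-two symmetric perturbation of a multiple of the identity, reduce to a \mmode{2\times2} computation on the invariant subspace \mmode{V=\Span\{\maxvarh{},\measurement\}}, verify the two resulting roots bracket the \mmode{V^{\perp}}-eigenvalue, and handle the collinear case by continuity.

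The execution differs in two places. First, the paper writes \mmode{\Mh|_{V}} directly in the (non-orthogonal) basis \mmode{\{\measurement,\maxvarh{}\}} via \mmode{\Mh\,[\measurement\ \ \maxvarh{}]=[\measurement\ \ \maxvarh{}]\,T} and reads off \mmode{\trace(T),\det(T)}; you instead factor the rank-two part as \mmode{BCB\transp} and pass to \mmode{CG}, which is an equivalent but slightly slicker bookkeeping. Second, and more notably, for the bracketing step the paper constructs explicit witness vectors \mmode{u_{1},u_{2}\in V} satisfying \mmode{\inprod{u_{1}}{\rh\maxvarh{}-\measurement}=0} and \mmode{\inprod{u_{2}}{\measurement}=0}, then rewrites \mmode{\Mh} by completing the square to obtain the sandwich \eqref{eq:eigenvalue-inequalities-Mh}. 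Your observation that \mmode{\det(CG)=\inprod{\maxvarh{}}{\measurement}^{2}-\normlambdah{}^{2}\normx^{2}\le0} by Cauchy--Schwarz, forcing \mmode{\mu_{\min}\le0\le\mu_{\max}}, is a genuinely simpler argument that yields the same conclusion without any ad hoc constructions. Both approaches rest on the identity \mmode{\inprod{\maxvarh{}}{\measurement}=\eta^{2}(\minvar)+\eps\,\normlambdah{}} from Lemma~\ref{lemma:eta-equals-sqrt-term}.
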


\begin{proof}[Lemma \ref{lemma:min-max-eig-values-Mh}]
Recall from \eqref{eq:Mh-definition} that
\[
\begin{cases}
\begin{aligned}
r(\minvar) \; &= \; \frac{2\eps}{\normlambdah{}} + \frac{\xminuseps}{\eta^2(\minvar)}, \quad \text{and } \\
M(\minvar) \; &= \; \frac{\normlambdah{}}{ \eps \etah{}} \Big( \eta^2(\minvar) \identity{\measdim} \; + \; r(h) \big( \maxvarh{} \maxvar\transp (\minvar) \big) - \big( \maxvarh{} x\transp + x\maxvar\transp (\minvar) \big)  \Big) .   
\end{aligned}
\end{cases}
\]
First, suppose that \mmode{\maxvarh{}} and \mmode{\measurement} are linearly independent. Then it is clear that the subspace \mmode{S \define \Span \{ \maxvarh{} , \measurement \} } is invariant under the linear transformation given by the matrix \mmode{\Mh}, and this linear transformation is identity on the orthogonal complement of \mmode{S}. Then it is also evident that the hessian has \mmode{\measdim - 2} eigenvalues equal to \mmode{\big( \nicefrac{1}{\eps} \big) \etah{} \normlambdah{}} and the two other distinct eigenvalues corresponding to the restriction of \mmode{\Mh} onto the \mmode{2}-dimensional subspace \mmode{S}.

Let \mmode{T} denote the \mmode{2\times 2} matrix representing the restriction of \mmode{\Mh} onto the subspace \mmode{S} for \mmode{\{ \maxvarh{} , \; \measurement \}} being chosen as a basis for \mmode{S}. In other words, it holds that \mmode{\Mh [\measurement \quad \maxvarh{}]  \; = \; [\measurement \quad \maxvarh{}] T}. Using the fact that \mmode{\eta^2 (\minvar) = \inprod{\maxvarh{}}{\measurement} - \eps \normlambdah{} } from \eqref{eq:eta-equals-sqrt-term}, it is easily verified that the matrix \mmode{T} simplifies to
\begin{equation}
T = \frac{\normlambdah{}}{\eps \etah{} }
\pmatr{-\eps \normlambdah{} & - \normlambdah{}^2 \\
\rh \inprod{\maxvarh{}}{\measurement} - \norm{x}^2 \quad & \rh \normlambdah{}^2 -\eps \normlambdah{} } .
\end{equation}
Furthermore, substituting \mmode{\rh}, it is also verified that \mmode{\trace (T) = \frac{\normlambdah{}^2}{\eta^2 (\minvar)} \xminuseps } and \mmode{\det (T) = 2\eps \eta^2 (\minvar) \normlambdah{}}. Now, it is easily verified that the two eigenvalues of \mmode{T} are precisely equal to \mmode{\{ \mineig (\Mh) , \maxeig (\Mh) \}}.

Since apart from \mmode{\{ \mineig (\Mh) , \maxeig (\Mh) \}}, the rest of the eigenvalues of \mmode{\Mh} are equal to \mmode{\frac{1}{\eps} \etah{} \normlambdah{}}, it remains to be shown that \mmode{\mineig (\Mh) \leq \frac{1}{\eps} \etah{} \normlambdah{} \leq \maxeig(\Mh)}; which we do so by producing \mmode{u_1, u_2 \in S} such that 
\begin{equation}
\label{eq:eigenvalue-inequalities-Mh}
\mineig (\Mh) \; \leq \; \frac{ \inprod{u_1}{\Mh u_1} }{\norm{u_1}^2} \; \leq \; \frac{1}{\eps} \etah{} \normlambdah{} \; \leq \; \frac{ \inprod{u_2}{\Mh u_2} }{\norm{u_2}^2} \; \leq \; \maxeig(\Mh) .    
\end{equation}
Observe that the inequalities \mmode{ \mineig (\Mh) \; \leq \; \frac{ \inprod{u_1}{\Mh u_1} }{\norm{u_1}^2} }, and \mmode{ \frac{ \inprod{u_2}{\Mh u_2} }{\norm{u_2}^2} \; \leq \; \maxeig (\Mh) } readily hold for any \mmode{u_1, u_2 \in S} since \mmode{\mineig (\Mh) , \maxeig (\Mh)} are the two eigenvalues of \mmode{\Mh} when restricted to the subspace \mmode{S}. To obtain the rest of the inequalities in \eqref{eq:eigenvalue-inequalities-Mh}, consider
\[
u_1 \; = \; \measurement \; + \; \frac{\normx^2 - \; \rh \inprod{\maxvarh{}}{\measurement}}{\rh \normlambdah{}^2 - \; \inprod{\maxvarh{}}{\measurement}} \maxvarh{} \quad \text{and} \quad u_2 \; = \; \measurement \; - \; \frac{\normx^2}{\inprod{\maxvarh{}}{\measurement}} \maxvarh{} .
\]
It is easily verified that \mmode{\inprod{u_1}{\rh \maxvarh{} - \measurement} \; = \; 0} and \mmode{ \inprod{u_2}{\measurement} \; = \; 0 }. Moreover, rewriting \mmode{\Mh} by completing squares as
\[
M(\minvar) = \frac{\normlambdah{}}{ \eps \rh \etah{}} \Big( \eta^2(\minvar) \rh \identity{\measdim} \; + \; \big( \rh \maxvarh{} - \measurement \big)\big( \rh \maxvarh{} - \measurement \big)\transp - \; \measurement \measurement\transp \Big) ,
\]
it is also easily verified that the inequalities
\[
\begin{cases}
\begin{aligned}
\frac{\inprod{u_1}{\Mh u_1}}{\norm{u_1}^2 } \; &= \; \frac{\normlambdah{}}{ \eps \rh \etah{}} \Big( \eta^2 (\minvar) \; - \; \frac{\abs{\inprod{u_1}{\measurement}}^2}{\norm{u_1}^2} \Big) \; &&\leq \; \frac{1}{\eps} \rh \normlambdah{} , \\
\frac{\inprod{u_2}{\Mh u_2}}{\norm{u_2}^2 } \; &= \; \frac{\normlambdah{}}{ \eps \rh \etah{}} \Big( \eta^2 (\minvar) \; + \; \frac{ \abs{ \inprod{u_2}{\rh \maxvarh{} - \measurement} }^2 }{\norm{u_2}^2} \Big) \; &&\geq \; \frac{1}{\eps} \rh \normlambdah{} . 
\end{aligned}
\end{cases}
\]
Thus, the inequalities \eqref{eq:eigenvalue-inequalities-Mh} are obtained at once.

To complete the proof for the case when \mmode{\maxvarh{}} and \mmode{ \measurement} are linearly dependent, we first see that the \mmode{\interior(\hcone) \ni \minvar \longmapsto \big( \mineig(\Mh) , \maxeig (\Mh) \big)} is continuous. Secondly, since the mapping \mmode{\interior(\hcone) \ni \minvar \longmapsto \Mh } is also continuous, and the eigenvalues of a matrix vary continuously, these two limits must be the same. The proof is now complete.
\end{proof}

\begin{proof}[Proposition \ref{proposition:eta-convex-regularity}]
For any \mmode{\epsbar \in (0, \eps)} and \mmode{\etahat > \cost\opt}, we know that the set \mmode{\sethsmall \subset \interior (\hcone)}. Consequently, it follows from Proposition \ref{prop:eta-derivatives} that \mmode{\eta : \sethsmall \longrightarrow [0, +\infty) } is twice continuously differentiable. To establish the required smoothness, and strong convexity assertions of the proposition, we first obtain uniform upper (lower) bound on the maximum (minimum) eigenvalue of the Hessian \mmode{\hessianeta}. To this end, for every \mmode{v \in \hilbert}, since \mmode{\inprod{v}{\hessianeta (v)} \; = \; \inprod{\linmap (v)}{\Mh \linmap(v)} }, we see that
\begin{equation}
\label{eq:hessian-inprod-min-max-bound-1}
\mineig (\Mh) \mineig (\linadj \circ \linmap) \ \leq \ \frac{\inprod{v}{\hessianeta (v)}}{\norm{v}^2} \ \leq \ \maxeig (\Mh) \maxeig (\linadj \circ \linmap) \quad \text{ for all } v \in \hilbert .
\end{equation}
The quantities \mmode{\mineig (\linadj \circ \linmap)} and \mmode{\maxeig (\linadj \circ \linmap)} are the minimum and maximum eigenvalues of the linear operator \mmode{\linadj \circ \linmap : \hilbert \longrightarrow \hilbert} respectively. Denoting \mmode{\maxeig \hessianeta} and \mmode{\maxeig \hessianeta} to be the the maximum and minimum eigenvalues of the hessian respectively, it follows from \eqref{eq:hessian-inprod-min-max-bound-1} that
\begin{equation}
\label{eq:hessian-inprod-min-max-bound-2}
\mineig (\Mh) \mineig (\linadj \circ \linmap) \ \leq \ \mineig \hessianeta \ \leq \ \maxeig \hessianeta \ \leq \ \maxeig (\Mh) \maxeig (\linadj \circ \linmap) .
\end{equation}

\noindent \textbf{Uniform upper bound for \mmode{\maxeig (\Mh)}.} For every \mmode{\minvar \in \sethsmall \subset \seth}, we have the inequality
\begin{equation}
\label{eq:uniform-upper-bound-max-eig-Mh-ineq-1}
\begin{aligned}
\ \frac{\norm{\maxvarh{}}}{\etah{}}  \ 
&= \ \frac{ \norm{\xminusetaphih{}} }{ \normphih{} \sqrt{\epsminuse{}} } \; = \; \frac{\eps}{\normphih{} \sqrt{\epsminuse{}} }, \ \text{from \eqref{eq:eta-quadratic-equation}}, \\
&< \  \frac{\eps}{\normphih{} \sqrt{\epsminusebar}}, \ \text{since \mmode{ \eh{} < \epsbar^2} for \mmode{\minvar \in \sethsmall}}. 
\end{aligned}
\end{equation}
On the other hand, since \mmode{\sethsmall \subset \hcone} we conclude from \eqref{eq:eta-phih-lower-bound} that the upper bound \mmode{ \frac{1}{\normphih{}} < \frac{\etah{}}{\normx - \eps}} holds for every \mmode{\minvar \in \sethsmall}. Putting together in \eqref{eq:uniform-upper-bound-max-eig-Mh-ineq-1}, we have
\[
\frac{\norm{\maxvarh{}}}{\etah{}} \ < \ \frac{\etah{}}{\normx - \eps} \, \frac{\eps}{\sqrt{\epsminusebar}} \ < \ \frac{\etahat}{\normx - \eps} \frac{\eps}{\sqrt{\epsminusebar}} \ .
\]
Thus, from \eqref{eq:min-max-eig-value-Mh}, we have
\begin{equation}
\label{eq:uniform-upper-bound-maxeig-Mh}
\maxeig(\Mh) \ \leq \ \frac{1}{ \eps  }\xminuseps  \left( \frac{\normlambdah{}}{\etah{}} \right)^3 \ < \ \frac{\eps^2 \big( \normx + \eps \big)}{\big( \normx - \eps \big)^2} \left( \frac{\etahat}{\sqrt{\epsminusebar}} \right)^3 .
\end{equation}

\noindent \textbf{Uniform lower bound for \mmode{\mineig (\Mh)}.} We know that \mmode{\sqrt{1 - \theta^2} < 1 - \frac{\theta^2}{2} } for every \mmode{\theta \in [0,1] }. Using this inequality in \eqref{eq:min-max-eig-value-Mh} for \mmode{\mineig (\Mh)} and simplifying, we see that
\begin{equation}
\label{eq:uniform-lower-bound-mineig-Mh}
\mineig (\Mh) \ \geq \ \frac{2 \eta^3 (\minvar)}{\xminuseps} \ \geq \ \frac{2 {\cost\opt}^3 }{\xminuseps} \geq \ \frac{2 {\etabar}^3 }{\xminuseps} ,
\end{equation}
for every \mmode{\etabar \in (0 , \cost\opt]}. Collecting \eqref{eq:uniform-upper-bound-maxeig-Mh} and \eqref{eq:uniform-lower-bound-mineig-Mh}, we see that the minimum and maximum eigenvalues of the hessian are uniformly bounded over \mmode{\sethsmall}, and the bounds are
\begin{equation}
\label{eq:uniform-bounds-eta-hessian-eigen-values}
\begin{cases}
\begin{aligned}
\maxeig \hessianeta \ &< \ \frac{\eps^2 {\etahat}^3}{\big(\epsminusebar\big)^{\nicefrac{3}{2}}} \frac{ \big( \normx + \eps \big)}{\big( \normx - \eps \big)^2} \; \maxeig (\linadj \circ \linmap) \ &&\eqqcolon \ \smooth \duet , \\
\mineig \hessianeta \ &\geq \ \frac{2 {\etabar}^3 }{\xminuseps} \mineig (\linadj \circ \linmap) \ &&\eqqcolon \ \strongconv (\etabar) .
\end{aligned}
\end{cases}
\end{equation}
Finally, \mmode{\eta : \sethsmall \longrightarrow [0, +\infty)} is twice continuously differentiable with the maximum eigenvalue of the hessian being uniformly bounded above by \mmode{\smooth \duet}. It then follows that \mmode{\eta : \sethsmall \longrightarrow [0, +\infty)} is \mmode{\smooth \duet}-smooth in the sense of \eqref{eq:def-smoothness}. Moreover, if \mmode{\linmap} is invertible in addition, then the minimum eigenvalue of the hessian is uniformly bounded below by \mmode{\strongconv > 0}. Consequently, the mapping \mmode{\eta : \sethsmall \longrightarrow [0, +\infty)} is \mmode{\strongconv}-strongly convex in the sense of \eqref{eq:def-strong-convexity}. The proof of the proposition is now complete.
\end{proof}


\subsection{Proofs for reformulation as a smooth minimization problem}
\begin{lemma}[Non-smooth reformulation]
\label{lemma:non-smooth-reformulation}
Consider the LIP \eqref{eq:lip-main} under the setting of Assumption \ref{assumption:lip-main}, then the LIP \eqref{eq:lip-main} is equivalent to the minimization problem
\begin{equation}
\label{eq:LIP-non-smooth-reformulation}
\begin{cases}
\min\limits_{\minvar \, \in \, \costball \cap \hcone} \ \etah{} \quad .
\end{cases}
\end{equation}
In other words, the optimal value of \eqref{eq:LIP-smooth-problem} is equal to \mmode{\cost\opt} and \mmode{\minvar\opt} is a solution to \eqref{eq:LIP-smooth-problem} if and only if \mmode{\cost\opt \minvar\opt } is an optimal solution to \eqref{eq:lip-main}.
\end{lemma}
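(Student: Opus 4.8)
The plan is to route everything through the characterization established inside the proof of Lemma~\ref{lemma:max-problem-optimal-value}: for any $\minvar$, the set $\Theta(\minvar) \define \{\theta \geq 0 : \norm{\measurement - \theta\linmap(\minvar)} \leq \eps\}$ is nonempty exactly when $\minvar \in \hcone$, and in that case $\eta(\minvar) = \min\Theta(\minvar)$ (so in particular $\norm{\measurement - \eta(\minvar)\linmap(\minvar)} = \eps$, cf.\ \eqref{eq:eta-quadratic-equation}). Before using this I would record that $\cost\opt > 0$: by positive homogeneity the set $\{\minvarf : \cost(\minvarf) = 0\}$ is a cone contained in the compact set $\costball$, hence equals $\{0\}$, and $\minvarf = 0$ is infeasible for \eqref{eq:lip-main} since $\normx > \eps$; inf-compactness then yields a minimizer $\minvarf\opt$ with $\cost(\minvarf\opt) = \cost\opt > 0$, so dividing by $\cost\opt$ is harmless.

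Two reciprocal rescaling claims then carry the argument. \textbf{Upward:} if $\minvarf$ is feasible for \eqref{eq:lip-main} with $\cost(\minvarf) > 0$, then $\minvar \define \minvarf/\cost(\minvarf)$ lies in $\costball \cap \hcone$ and $\eta(\minvar) \leq \cost(\minvarf)$. Indeed $\cost(\minvar) = 1$, and since $\linmap(\minvarf) = \cost(\minvarf)\linmap(\minvar)$ the scalar $\theta = \cost(\minvarf)$ belongs to $\Theta(\minvar)$; hence $\Theta(\minvar) \neq \emptyset$, giving $\minvar \in \hcone$ and $\eta(\minvar) = \min\Theta(\minvar) \leq \cost(\minvarf)$. \textbf{Downward:} if $\minvar \in \costball \cap \hcone$, then $\eta(\minvar)\minvar$ is feasible for \eqref{eq:lip-main} and $\cost(\eta(\minvar)\minvar) \leq \eta(\minvar)$. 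This is immediate since $\eta(\minvar) = \min\Theta(\minvar) \in \Theta(\minvar)$ gives $\norm{\measurement - \linmap(\eta(\minvar)\minvar)} = \norm{\measurement - \eta(\minvar)\linmap(\minvar)} \leq \eps$, while $\cost(\eta(\minvar)\minvar) = \eta(\minvar)\cost(\minvar) \leq \eta(\minvar)$ by positive homogeneity and $\cost(\minvar) \leq 1$.

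These give the value identity at once: the downward claim shows $\cost\opt \leq \cost(\eta(\minvar)\minvar) \leq \eta(\minvar)$ for every $\minvar \in \costball \cap \hcone$, so $\inf_{\costball\cap\hcone}\eta \geq \cost\opt$; and the upward claim applied to $\minvarf\opt$ produces $\minvar\opt \define \minvarf\opt/\cost\opt \in \costball\cap\hcone$ with $\eta(\minvar\opt) \leq \cost\opt$. Hence $\min_{\costball\cap\hcone}\eta = \cost\opt$ and it is attained.

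Finally the correspondence of solutions. If $\minvar\opt$ attains $\min_{\costball\cap\hcone}\eta$, then $\eta(\minvar\opt) = \cost\opt$, and the downward claim makes $\cost\opt\minvar\opt = \eta(\minvar\opt)\minvar\opt$ feasible for \eqref{eq:lip-main} with $\cost(\cost\opt\minvar\opt) = \cost\opt\cost(\minvar\opt) \leq \cost\opt$; since $\cost\opt$ is the optimal value, equality must hold, so $\cost\opt\minvar\opt$ solves \eqref{eq:lip-main}. Conversely, if $\cost\opt\minvar\opt$ solves \eqref{eq:lip-main}, then $\cost\opt\cost(\minvar\opt) = \cost\opt$ forces $\cost(\minvar\opt) = 1$, and the upward claim applied to $\minvarf = \cost\opt\minvar\opt$ puts $\minvar\opt$ in $\costball\cap\hcone$ with $\eta(\minvar\opt) \leq \cost\opt = \min_{\costball\cap\hcone}\eta$, whence $\eta(\minvar\opt) = \cost\opt$ and $\minvar\opt$ is a minimizer. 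I expect the only delicate point to be the implication $\Theta(\minvar) \neq \emptyset \Rightarrow \minvar \in \hcone$, i.e.\ that feasibility of $\minvarf$ forces \emph{both} defining inequalities of $\hcone$ (the sign condition $\inprod{\measurement}{\linmap(\minvar)} > 0$ as well as $e(\minvar) \leq \eps^2$); but this is exactly what the proof of Lemma~\ref{lemma:max-problem-optimal-value} supplies, so nothing new is needed, and the remainder is bookkeeping around the scaling factor $\cost\opt$.
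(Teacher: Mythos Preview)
Your proof is correct and takes a genuinely different route from the paper's. The paper's argument is indirect: it invokes \cite[Theorem~10]{Sheriff2020NovelProblems} to identify the LIP \eqref{eq:lip-main} with the min-max problem \eqref{eq:lip-min-max}, and then applies Proposition~\ref{proposition:max-problem-solution} to collapse the inner supremum to $\eta(h)$ on $\costball \cap \hcone$. Your argument, by contrast, is entirely self-contained within the paper: you use only the characterization $\eta(\minvar) = \min\Theta(\minvar)$ and the equivalence $\Theta(\minvar) \neq \emptyset \Leftrightarrow \minvar \in \hcone$ established inside the proof of Lemma~\ref{lemma:max-problem-optimal-value}, and then run a direct two-way rescaling (feasible $\minvarf \mapsto \minvarf/\cost(\minvarf)$ and $\minvar \mapsto \eta(\minvar)\minvar$) to match optimal values and minimizers. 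What this buys you is transparency and independence from the external reference; the paper's route is terser but requires the reader to unpack a saddle-point theorem from elsewhere. Your preliminary step showing $\cost\opt > 0$ (via the observation that $\cost^{-1}(0)$ is a cone inside the compact set $\costball$, hence $\{0\}$, which is infeasible since $\normx > \eps$) is a detail the paper leaves implicit but which is needed to make the division by $\cost\opt$ legitimate, so it is good that you spelled it out.
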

\begin{proof}[Lemma \ref{lemma:non-smooth-reformulation}]
Recall that \mmode{\Lambda = \{ \maxvar \in \R{\measdim} : \inprod{\maxvar}{\measurement} - \eps \norm{\maxvar} > 0 \}}, \mmode{\costball = \{ \minvar \in \hilbert : \cost (\hilbert) \leq 1 \}}, and \mmode{\lagrangian = 2\sqterm - \inprod{\maxvar}{\phih{}} }. The original LIP \eqref{eq:lip-main} was reformulated as the min-max problem. By considering \mmode{r = 2, q = 0.5, \delta = 0} in \citep[Theorem 10]{Sheriff2020NovelProblems} we see that the min-max problem
\begin{equation}
\label{eq:lip-min-max}
\begin{cases}
\min\limits_{\minvar \; \in \; \costball} \; \sup\limits_{\maxvar \; \in \; \Lambda} \ \lagrangian ,
\end{cases}
\end{equation}
is equivalent to the LIP \eqref{eq:lip-main} with the optimal value of the min-max problem equal to \mmode{\cost\opt}. Moreover, from \citep[Theorem 10, assertion (ii)-a]{Sheriff2020NovelProblems}, it also follows that \mmode{ \minvar\opt \in \argmin\limits_{\minvar \; \in \; \costball} \; \left\{ \sup\limits_{\maxvar \; \in \; \Lambda} \ \lagrangian \right\} } if and only if \mmode{\cost\opt \minvar\opt} is an optimal solution to the LIP \eqref{eq:lip-main}. Solving for the maximization problem over \mmode{\maxvar} in the min-max problem \eqref{eq:lip-min-max}, in view of Proposition \ref{proposition:max-problem-solution} we know that the maximum over \mmode{\maxvar} is equal to \mmode{\etah{}} whenevr it is finite. Therefore, we get
\[
\minvar\opt \in \argmin_{\minvar \in \costball \cap \hcone} \ \etah{},
\]
if and only if \mmode{\cost\opt \minvar\opt} is an optimal solution to the LIP \eqref{eq:lip-main}. The proof is now complete.
\end{proof}

\begin{proof}[Theorem \ref{theorem:smooth-reformulation}]
Under the setting of Assumption \ref{assumption:lip-main} we have \mmode{\nbhood{\measurement}{\eps} \cap \image (\linmap) \neq \emptyset }. Thus, it follows from \citep[Proposition 31-(ii)]{Sheriff2020NovelProblems} and consequently, from \citep[Theorem 10-(ii)-b]{Sheriff2020NovelProblems}, that the min-max problem
\[
\begin{cases}
\min\limits_{\minvar \in \costball} \; \sup\limits_{\maxvar \in \setlambda } \quad 2 \sqterm \; - \; \inprod{\maxvar}{\linmap (\minvar)} 
\end{cases}
\]
admits a saddle point solution. Moreover, every saddle point \mmode{(\minvar\opt , \maxvar\opt) \in \costball \times \setlambda } is such that \mmode{\minvar\opt = \big( \nicefrac{1}{\cost\opt} \big) \minvarf\opt } where \mmode{\minvarf\opt} is any optimal solution to the LIP \eqref{eq:lip-main}, and \mmode{\maxvar\opt} is unique that satisfies
\[
\maxvar\opt = \argmax_{\maxvar \in \setlambda} \quad 2\sqterm \; - \; \inprod{\maxvar}{\linmap (\minvar\opt)} .
\]
In view of Proposition \ref{proposition:max-problem-solution}-(ii), we conclude that \mmode{\minvar\opt \in \interior (\hcone)}. Thus, \mmode{e(\minvarf\opt) = e(\minvar\opt) < \eps^2}, this establishes assertion (i) of the lemma.

To prove the rest of the theorem, consider any \mmode{\epsbar, \etahat > 0} such that \mmode{e(\minvarf\opt) \leq \epsbar^2 < \eps^2} and \mmode{\cost\opt \leq \etahat}. Then for any \mmode{\minvar\opt \in \ \argmin\limits_{\minvar \, \in \, \costball \cap \hcone} \ \etah{} }, we conclude from Lemma \ref{lemma:non-smooth-reformulation} that \mmode{\cost\opt \minvar\opt} is an optimal solution to the LIP \eqref{eq:lip-main}. Consequently, assertion (i) of the proposition then implies that \mmode{ e(\minvar\opt) = e(\cost\opt \minvar\opt) \leq \epsbar^2 }. Thus, we have \mmode{\minvar\opt \in \smallhcone}. Moreover, from Lemma \ref{lemma:non-smooth-reformulation} it is also immediate that \mmode{\eta (\minvar\opt) = \cost\opt \leq \etahat}. Thus, \mmode{\minvar\opt \in \sethsmall}, and we have the inclusion
\[
\argmin\limits_{\minvar \, \in \, \costball \cap \hcone} \ \etah{} \ \subset \ \sethsmall .
\]
Since \mmode{\sethsmall \subset \costball \cap \hcone} to begin with, we conclude
\[
\argmin\limits_{\minvar \, \in \, \costball \cap \hcone} \ \etah{} \ = \argmin\limits_{\minvar \; \in \; \sethsmall} \ \etah{} .
\]
Now assertion (ii) of the theorem follows immediately as a consequence of Lemma \ref{lemma:non-smooth-reformulation}.
\end{proof}


\subsection{Proofs for reformulation as a strongly-convex min-max problem}
\begin{proof}[Lemma \ref{lemma:ACP-min-max}]
Recall that \mmode{ \setlambda \ni \maxvar \longmapsto \sqfunc = \sqterm }, then denoting \mmode{\setlambda \define \{ \maxvar \in \R{\measdim} : \inprod{\maxvar}{\measurement} - \eps \normlambda > 0 \} }, it is known from \citep{Sheriff2020NovelProblems} that the LIP \eqref{eq:lip-main} is equivalent to the min-max problem
\begin{equation}
\label{eq:min-max-jmlr}
\begin{cases}
\min\limits_{\minvar \in \costball} \; \sup\limits_{\maxvar \in \setlambda } \quad \lagrangian \; = \; 2 \sqfunc \; - \; \inprod{\maxvar}{\linmap (\minvar)} .
\end{cases}
\end{equation}
In particular, under the setting of Assumption \ref{assumption:lip-main}, it follows that the min-max problem \eqref{eq:min-max-jmlr} admits a saddle point solution. It follows from \citep[Theorem 10]{Sheriff2020NovelProblems}) that a pair \mmode{(\minvar\opt , \maxvar\opt)} is a saddle point of \eqref{eq:min-max-jmlr} if and only if \mmode{ \cost\opt \minvar\opt } is an optimal solution to the LIP \eqref{eq:lip-main}, and \mmode{\maxvar\opt = \maxvarh{\opt}} in view of Lemma \ref{proposition:max-problem-solution}.\footnote{The inclusion \mmode{\maxvar\opt \in \cost\opt \Lambda } provided in \citep[(44), Theorem 10]{Sheriff2020NovelProblems} turns out to be same as the condition \mmode{\maxvar\opt = \maxvarh{\opt}} under the setting of Assumption \ref{assumption:lip-main} for LIP \eqref{eq:lip-main}. This can be formally established by observing from \citep[Proposition 31]{Sheriff2020NovelProblems} that \mmode{\maxvar\opt = \cost\opt \frac{\measurement - \cost\opt \linmap(\minvar\opt)}{\pnorm{\measurement - \cost\opt \linmap(\minvar\opt)}{\linmap}'} }, and then, from \citep[Lemma 33]{Sheriff2020NovelProblems} we also have
\[
\begin{aligned}
\pnorm{\measurement - \cost\opt \linmap(\minvar\opt)}{\linmap}' \; &= \; \max_{\minvar \in \costball} \; \inprod{\measurement - \cost\opt \linmap(\minvar\opt)}{\linmap(\minvar)} \; = \; \inprod{\measurement - \cost\opt \linmap(\minvar\opt)}{\linmap(\minvar\opt)} \\
&= \; \norm{\linmap(\minvar\opt) } \sqrt{ \eps^2 - e(\minvar\opt) } .
\end{aligned}
\]
}

We prove the lemma by establishing that every saddle point solution to the min-max problem \eqref{eq:min-max-jmlr} is indeed a saddle point solution to the min-max problem \eqref{eq:ACP-min-max-problem} as well. We observe that the only difference between the min-max problems \eqref{eq:ACP-min-max-problem} and \eqref{eq:min-max-jmlr} is in their respective feasible sets \mmode{\setlambdasmall} and \mmode{\setlambda} for the variable \mmode{\maxvar}. Moreover, since \mmode{\setlambdasmall \subset \setlambda}, it suffices to show that for every saddle point \mmode{(\minvar\opt , \maxvar\opt)} of \eqref{eq:min-max-jmlr}, the inclusion \mmode{\maxvar\opt \in \setlambdasmall} also holds. To establish this inclusion, we first recall from \eqref{eq:eta-equals-sqrt-term} that 
\[
l(\maxvar\opt) = l (\maxvarh{\opt}) = \eta (\minvar\opt) = \cost\opt \; \geq \; \etabar .
\]
Secondly, using \eqref{eq:lambdah-value} we also have
\[
\begin{aligned}
\norm{\maxvar (\minvar\opt)} \;
&= \; \frac{\eps \; \eta (\minvar\opt)}{\norm{ \linmap (\minvar\opt) } \sqrt{\eps^2 - e(\minvar\opt)} } \; \leq \; \frac{\eps \; \eta (\minvar\opt)}{\norm{ \linmap (\minvar\opt) } \sqrt{\epsminusebar}} \quad \text{since } e(\minvar\opt) \in (\epsbar^2 , \eps^2) , \\
& \leq \; \frac{\eps \; \eta^2 (\minvar\opt)}{ \big( \normx - \eps \big) \sqrt{\epsminusebar} } \quad \text{ from \eqref{eq:eta-phih-lower-bound}} , \\
& \leq \; \frac{\eps \; \etabar^2 }{ \big( \normx - \eps \big) \sqrt{\epsminusebar}} \; = \; \boundlambda .
\end{aligned}
\]
Thus, \mmode{\maxvar\opt \in \setlambdasmall} and the lemma holds.
\end{proof}

\begin{lemma}
\label{lemma:hessian-dual-function-min-max-eigenvalues}
Consider \mmode{\measurement \in \R{\measdim}} and \mmode{\eps > 0} such that \mmode{\normx > \eps }. Then the following assertions hold with regards to the mapping \mmode{ \setlambda \ni \maxvar \longmapsto \sqfunc \define \sqterm }. 
\begin{enumerate}[leftmargin = *, label = \rm{(\roman*)}]
\item the mapping \mmode{ \setlambda \ni \maxvar \longmapsto \sqfunc } is twice continuously differentiable and its hessian \mmode{\hessian} evaluated at \mmode{\maxvar \in \setlambda} is given by
\begin{equation}
\label{eq:hessian}
\hessian = \frac{-\eps}{2\sqfunc \normlambda } \left( \identity{\measdim} \; - \; \frac{1}{\normlambda^2} \maxvar\maxvar\transp \right) \; - \; \frac{1}{4 (\sqfunc)^3 } \left( \xminusepslambda \right)\left( \xminusepslambda \right)\transp .
\end{equation}

\item The smallest and largest absolute values of the eigenvalues of \mmode{\hessian} denoted respectively by \mmode{\mineig (\hessian)} and \mmode{\maxeig (\hessian)}, are given by
\begin{equation}
\label{eq:min-max-eig-value-Hlambda}
\begin{cases}
\begin{aligned}
\mineig \ &= \ \frac{\xminuseps}{8 ( \sqfunc)^3 } \left( 1 \; - \; \sqrt{ 1 \; - \; \frac{8\eps (\sqfunc)^6 }{\xminuseps^2 \norm{\maxvar}^3} } \right) \\
\maxeig \ &= \ \frac{\xminuseps}{8 ( \sqfunc)^3 } \left( 1 \; + \; \sqrt{ 1 \; - \; \frac{8\eps (\sqfunc)^6 }{\xminuseps^2 \norm{\maxvar}^3} } \right) .
\end{aligned}
\end{cases}
\end{equation}
\end{enumerate}
\end{lemma}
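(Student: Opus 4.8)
The plan is to treat $\sqfunc$ as the square root of the concave function $g(\maxvar) \define \inprod{\maxvar}{\measurement} - \eps \normlambda$, compute its Hessian by the chain rule, and then determine the spectrum of that Hessian by reducing to a two-dimensional invariant subspace. For assertion (i), note that on $\setlambda$ we have $g(\maxvar) > 0$, and in particular $\maxvar \neq 0$, so $g$ is $C^\infty$ there and hence so is $\sqfunc = \sqrt{g}$ (composition with $t \mapsto \sqrt{t}$ on $(0,\infty)$); this already gives the asserted twice continuous differentiability. Then I would compute $\nabla g(\maxvar) = \measurement - \frac{\eps}{\normlambda}\maxvar = \xminusepslambda$ and $\nabla^2 g(\maxvar) = -\frac{\eps}{\normlambda}\big(\identity{\measdim} - \frac{1}{\normlambda^2}\maxvar\maxvar\transp\big)$, and combine them through the identity $\nabla^2 \sqfunc = \frac{1}{2 \sqfunc}\nabla^2 g - \frac{1}{4 (\sqfunc)^3}\nabla g\,\nabla g\transp$; substituting the two expressions yields \eqref{eq:hessian} verbatim.

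For assertion (ii), fix $\maxvar \in \setlambda$ and set $v \define \xminusepslambda$, so that $\hessian = -\frac{\eps}{2 \sqfunc \normlambda}\big(\identity{\measdim} - \frac{1}{\normlambda^2}\maxvar\maxvar\transp\big) - \frac{1}{4 (\sqfunc)^3} v v\transp$. First assume $\measurement$ and $\maxvar$ are linearly independent and put $S \define \Span\{\measurement, \maxvar\}$, a two-dimensional subspace. From the displayed form of $\hessian$ it is immediate that $S$ is invariant, and since $S^\perp \subseteq \{\maxvar\}^\perp \cap \{v\}^\perp$, the operator $\hessian$ acts on $S^\perp$ as the scalar $-\frac{\eps}{2 \sqfunc \normlambda}$; hence this value is an eigenvalue of $\hessian$ with multiplicity $\measdim-2$, and the two remaining eigenvalues are those of the restriction of $\hessian$ to $S$. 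Taking as an orthonormal basis of $S$ the vector $\maxvar/\normlambda$ together with the normalized component of $\measurement$ orthogonal to $\maxvar$, I would write out the symmetric $2\times 2$ matrix $T$ representing $\hessian$ on $S$ and simplify, using repeatedly the defining identity $(\sqfunc)^2 = \inprod{\maxvar}{\measurement} - \eps\normlambda$, to obtain $\trace(T) = -\frac{\xminuseps}{4 (\sqfunc)^3}$ and $\det(T) = \frac{\eps}{8\normlambda^3}$. Because $T$ is symmetric its discriminant $(\trace T)^2 - 4\det T$ is nonnegative --- this is exactly the nonnegativity of the radicand in \eqref{eq:min-max-eig-value-Hlambda} --- and solving the characteristic quadratic $\mu^2 - (\trace T)\mu + \det T = 0$, whose two roots are negative since $\trace T < 0$ and $\det T > 0$, gives after taking absolute values precisely the two quantities in \eqref{eq:min-max-eig-value-Hlambda}.

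It then remains to check that the $(\measdim-2)$-fold eigenvalue, whose absolute value is $\frac{\eps}{2\sqfunc\normlambda}$, lies between $\mineig(\hessian)$ and $\maxeig(\hessian)$, so that these two are indeed the smallest and largest absolute values over the full spectrum. I would verify this with Rayleigh quotients over $S$: any nonzero $u_1 \in S \cap \{\maxvar\}^\perp$ satisfies $\inprod{u_1}{\hessian u_1} = -\frac{\eps}{2\sqfunc\normlambda}\norm{u_1}^2 - \frac{1}{4(\sqfunc)^3}\inprod{v}{u_1}^2 \le -\frac{\eps}{2\sqfunc\normlambda}\norm{u_1}^2$, which forces $\maxeig(\hessian) \ge \frac{\eps}{2\sqfunc\normlambda}$; and any nonzero $u_2 \in S \cap \{v\}^\perp$ satisfies $\inprod{u_2}{\hessian u_2} = -\frac{\eps}{2\sqfunc\normlambda}\big(\norm{u_2}^2 - \frac{1}{\normlambda^2}\inprod{\maxvar}{u_2}^2\big) \ge -\frac{\eps}{2\sqfunc\normlambda}\norm{u_2}^2$, which forces $\mineig(\hessian) \le \frac{\eps}{2\sqfunc\normlambda}$. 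Finally the linearly dependent case follows by continuity, exactly as in the proof of Lemma \ref{lemma:min-max-eig-values-Mh}: the map $\maxvar \mapsto \hessian$, hence its eigenvalues, and the right-hand sides of \eqref{eq:min-max-eig-value-Hlambda} are all continuous on $\setlambda$ and they agree on the dense subset where $\measurement$ and $\maxvar$ are linearly independent.

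I expect the only genuinely delicate part to be the two-dimensional reduction: distilling $\trace T$ and $\det T$ into the stated closed form is a short but careful computation entirely driven by $(\sqfunc)^2 = \inprod{\maxvar}{\measurement} - \eps\normlambda$, and one must be sure, via the two Rayleigh-quotient test vectors above, that the high-multiplicity eigenvalue does not become a new extreme value. Assertion (i) and the characteristic-quadratic step are then routine.
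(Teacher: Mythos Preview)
Your proposal is correct and follows essentially the same route as the paper's proof: compute the Hessian by the chain rule, reduce to the two-dimensional invariant subspace $S = \Span\{\maxvar, \measurement\}$, read off the extreme eigenvalues from the trace and determinant of the restriction, sandwich the $(\measdim-2)$-fold eigenvalue via Rayleigh quotients on $S \cap \{\maxvar\}^\perp$ and $S \cap \{v\}^\perp$, and close by continuity in the degenerate case. The only cosmetic difference is that you work in an orthonormal basis of $S$ (making your $2\times 2$ matrix symmetric, so the nonnegativity of the discriminant is automatic), whereas the paper uses the non-orthonormal basis $\{\maxvar,\, \measurement - \tfrac{\eps}{\normlambda}\maxvar\}$ and obtains a non-symmetric representing matrix with the same trace and determinant.
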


\begin{proof}[Lemma \ref{lemma:hessian-dual-function-min-max-eigenvalues}]
First of all, we observe that since \mmode{ \maxvar \longmapsto \sqfunc} is differentiable everywhere on \mmode{\setlambda}, and the gradients are given by \mmode{\nabla \sqfunc = \frac{1}{2\sqfunc} \Big( \xminusepslambda \Big)}. Differentiating again w.r.t. \mmode{\maxvar}, we easily verify that the hessian is indeed as given by \eqref{eq:hessian}.
First, suppose that \mmode{\maxvar} and \mmode{\measurement} are linearly independent, observe that the subspace \mmode{S \define \Span \{ \maxvar , \xminusepslambda \} } is invariant under the linear transformation given by the hessian matrix \mmode{\hessian}, and it is identity on the orthogonal complement of \mmode{S}. Then it is evident that the hessian has \mmode{\measdim - 2} eigenvalues equal to \mmode{\nicefrac{-\eps}{\sqfunc \normlambda }} and the two other distinct eigenvalues corresponding to the restriction of \mmode{\hessian} onto \mmode{S}. Selecting \mmode{\{ \maxvar , \; \xminusepslambda \}} as a basis for \mmode{S}, the linear mapping of the hessian is given by the matrix
\begin{equation}
T = \pmatr{0 & \frac{\eps \sqfunc}{2\normlambda^3} \\
\frac{-1}{4\sqfunc} & \frac{- \xminuseps}{4 (\sqfunc)^3}} .
\end{equation}
It is a straightforward exercise to verify that \mmode{-\mineig} and \mmode{-\maxeig} are indeed the two distinct eigenvalues of \mmode{T} and consequently, the remaining two eigenvalues of the hessian \mmode{\hessian}. Since the rest of the eigenvalues are \mmode{\nicefrac{-\eps}{\sqfunc \normlambda }}, it remains to be shown that \mmode{\mineig \leq \nicefrac{\eps}{2\sqfunc\normlambda} \leq \maxeig}. We establish it by producing \mmode{u_1, u_2 \in S} such that 
\begin{equation}
\label{eq:eigenvalue-inequalities}
\mineig \; \leq \; \frac{\abs{ \inprod{u_1}{\hessian u_1} }}{\norm{u_1}^2} \; \leq \; \frac{\eps}{2\sqfunc\normlambda} \; \leq \; \frac{\abs{ \inprod{u_2}{\hessian u_2} }}{\norm{u_2}^2} \; \leq \; \maxeig .    
\end{equation}
Observe that the inequalities \mmode{ \mineig \; \leq \; \frac{\abs{ \inprod{u_1}{\hessian u_1} }}{\norm{u_1}^2} }, and \mmode{ \frac{\abs{ \inprod{u_2}{\hessian u_2} }}{\norm{u_2}^2} \; \leq \; \maxeig } readily hold for any \mmode{u_1, u_2 \in S} since \mmode{-\mineig , -\maxeig} are the two eigenvalues of \mmode{\hessian} when restricted to the subspace \mmode{S}. Considering \mmode{u_1 = (\sqfunc)^2 \measurement + \Big( \normx^2 - \, \frac{\eps \inprod{\maxvar}{\measurement}}{\normlambda} \Big) \maxvar } and \mmode{u_2 = \maxvar - \frac{\normlambda^2}{\inprod{\maxvar}{\measurement}} }, it is easily verified that \mmode{\inprod{ \xminusepslambda}{u_1} = 0}, and \mmode{\inprod{\maxvar}{u_2} = 0}. Moreover, we also get the inequalities
\[
\begin{cases}
\begin{aligned}
\inprod{u_1}{\hessian u_1} \; &= \; \frac{-\eps}{2\sqfunc \normlambda } \norm{u_1}^2 + \; \frac{\eps}{2\sqfunc \normlambda } \frac{\abs{\inprod{\maxvar}{u_1}}^2}{\normlambda^2}  &&\geq \frac{-\eps}{2\sqfunc \normlambda } \norm{u_1}^2 , \\
\inprod{u_2}{\hessian u_2} \;& = \; \frac{-\eps}{2\sqfunc \normlambda } \norm{u_2}^2 - \; \frac{1}{4(\sqfunc)^3} \abs{ \inprod{\xminusepslambda}{u_2} }^2 &&\leq \frac{-\eps}{2\sqfunc \normlambda } \norm{u_2}^2 .
\end{aligned}
\end{cases}
\]
Since the hessian \mmode{\hessian} is negative semidefinite, the inequalities \eqref{eq:eigenvalue-inequalities} are obtained at once.

To complete the proof for the case when \mmode{\lambda} and \mmode{ \measurement} are linearly dependent, we first see that the expressions in \eqref{eq:min-max-eig-value-Hlambda} are continuous w.r.t. \mmode{\maxvar}. Also, it is evident that the mapping \mmode{\setlambda \ni \maxvar \longmapsto \hessian } is continuous. Since the eigenvalues of a matrix vary continuously, these two limits must be the same. The proof is now complete.
\end{proof}

\begin{proof}[Lemma \ref{lemma:convex-regularity-of-dual-function}]
We begin by first establishing that \mmode{\mineig(\hessian)} and \mmode{\maxeig (\hessian)} as given in \eqref{lemma:hessian-dual-function-min-max-eigenvalues} satisfy the inequalities 
\begin{equation}
\label{eq:strong-conv-smoothness-inequalities}
\strongconv' \; \leq \; 2\mineig (\hessian) \; \leq \; 2\maxeig (\hessian) \; \leq \; \smooth' \quad \text{for every } \maxvar \in \setlambdasmall .
\end{equation}
Since the mapping \mmode{\setlambda \ni \maxvar \longmapsto \hessian} is concave, all the eigenvalues of the hessian \mmode{\hessian} are non-positive (more importantly, real-valued). Thus, \mmode{ \frac{8\eps (\sqfunc)^6 }{\xminuseps^2 \norm{\maxvar}^3} \; \leq \; 1 } since the square root term in \eqref{lemma:hessian-dual-function-min-max-eigenvalues} must be real valued. To prove the lower bound for \mmode{\mineig (\hessian)} in \eqref{eq:strong-conv-smoothness-inequalities}, we use the inequality that \mmode{\sqrt{1 - \theta^2} < 1 - \frac{\theta^2}{2} } for every \mmode{\theta \in [0,1] }. Thereby,
\[
\begin{aligned}
\mineig (\hessian) \ &> \ \frac{\xminuseps}{8 ( \sqfunc)^3 } \left( \frac{4\eps (\sqfunc)^6 }{\xminuseps^2 \norm{\maxvar}^3} \right) \\
& = \ \frac{\eps }{2 \xminuseps } \left( \frac{(\sqfunc)}{\norm{\maxvar}} \right)^3 \ \geq \ \frac{\eps}{2\xminuseps } \left( \frac{\etabar}{\boundlambda} \right)^3 \\
& = \ \big( \nicefrac{1}{2} \big) \strongconv' \ \text{ for all } \maxvar \in \setlambdasmall .
\end{aligned}
\]
For \mmode{\maxeig (\hessian)}, using the inequality \mmode{\sqrt{1 - \theta^2} < 1 } for \mmode{\theta \in [0,1]}, we immediately get
\[
\maxeig (\hessian) \leq \frac{\xminuseps}{8 ( \sqfunc)^3 } \; 2 \; \leq \; \frac{\xminuseps}{4 \etabar^3 } \; = \; \big( \nicefrac{1}{2} \big) \smooth' \quad \text{for all } \maxvar \in \setlambdasmall .
\]

Since \mmode{\setlambdasmall \subset \setlambda }, for every \mmode{\etabar \leq \cost\opt} and \mmode{\boundlambda > 0}, it follows from assertion (i) of Lemma \ref{lemma:hessian-dual-function-min-max-eigenvalues} that the mapping \mmode{\setlambdasmall \ni \maxvar \longmapsto -2\sqfunc } is also twice continuously differentiable.  with the Hessian evaluated at \mmode{\maxvar} being \mmode{-2\hessian}. Moreover, the smallest and largest eigenvalues of this hessian are \mmode{2\mineig (\hessian)} and \mmode{2 \maxeig (\hessian)} respectively. In view of the inequalities \eqref{eq:strong-conv-smoothness-inequalities}, we see that the minimum eigenvalue of the hessian of the map \mmode{\setlambdasmall \ni \maxvar \longmapsto -2\sqfunc} is bounded below by \mmode{\strongconv' \duetlambda} (and the maximum eigenvalue is bounded above by \mmode{\smooth' (\etabar)}),  uniformly over \mmode{\maxvar \in \setlambdasmall}. Thus, the mapping \mmode{\setlambdasmall \ni \maxvar \longmapsto -2\sqfunc} is \mmode{\strongconv'}-strongly convex and \mmode{\smooth'}-smooth. This completes the proof of the lemma.
\end{proof}

\subsection{Proofs for step-size selection}
\begin{proof}[Proposition \ref{proposition:gamma_opt}]
For a given \mmode{\minvar , \direction}, we first observe that the mapping \mmode{[0,1] \ni \stepsize \longmapsto \eta(\minvar + \stepsize \direction)} is convex since the mapping \mmode{\minvar \longmapsto \etah{}} is convex. Consequently, the first order optimality conditions for \eqref{eq:step-size-selection-proposition} are necessary and sufficient. Now, denoting \mmode{\eta_{\stepsize} \define \eta(\minvar + \stepsize \direction) }, we have \mmode{\frac{\partial \eta_{\stepsize}}{\partial \stepsize} = \inprod{\etagrad{+\stepsize \direction} }{\direction} }, and the first order optimality conditions read
\begin{enumerate}[leftmargin = *]
    \item If \mmode{\inprod{\etagrad{}}{\direction} = \ \frac{\partial \eta_{\stepsize}}{\partial \stepsize} \Big\vert_{\stepsize = 0} \geq 0 }, then \mmode{\stepsize\opt = 0}

    \item If \mmode{\etagrad{+ \direction}} exists, and \mmode{  \inprod{\etagrad{+ \direction}}{\direction} = \frac{\partial \eta_{\stepsize}}{\partial \stepsize} \Big\vert_{\stepsize = 1} \leq 0 }, then \mmode{\stepsize\opt = 1}
\end{enumerate}
Substituting for \mmode{\etagrad{+ \stepsize \direction}} from \eqref{eq:eta-derivatives} and observing that 
\[
\sgn \inprod{\etagrad{+\stepsize \direction}}{\direction} = - \sgn \inprod{ \measurement - \eta_{\stepsize} \linmap (\minvar + \stepsize \direction) }{\linmap (\direction)},
\]
the first order optimality conditions are equivalently written as
\begin{enumerate}[leftmargin = *]
    \item \mmode{\inprod{\measurement}{\linmap (\direction)} \leq \etah{} \inprod{\phih{}}{\linmap (\direction)} }, implies that \mmode{ \inprod{\etagrad{}}{\direction} \geq 0 }, and thus \mmode{\stepsize\opt = 0}.

    \item \mmode{ \inprod{\measurement}{\linmap (\direction)} \geq \eta (\minvar + \direction) \inprod{\linmap (\minvar + \direction)}{\linmap (\direction)} } implies that \mmode{ \inprod{\etagrad{+ \direction}}{\direction} \leq 0}, and thus \mmode{\stepsize\opt = 1}.
\end{enumerate}

If both the above conditions fail, then we know that there exists \mmode{\stepsize\opt \in (0,1)} such that \mmode{\frac{\partial \eta_{\stepsize}}{\partial \stepsize} \Big\vert_{\stepsize = 0} = 0}. Equivalently, we have \mmode{0 = \inprod{\etagrad{+ \stepsize\opt \direction}}{\direction} = \inprod{\measurement - \eta_{\stepsize\opt} \linmap(\minvar + \stepsize\opt \direction) }{\linmap (\direction)} }. 
Substituting for \mmode{ \eta_{\stepsize\opt}} from \eqref{eq:etah-definition} and simplifying gives the following equation in \mmode{\stepsize\opt}
\[
\frac{ \inprod{\linmap (\minvar + \stepsize\opt \direction)}{\linmap (\direction)} }{ \inprod{\measurement}{\linmap (\direction)} } = \frac{1}{\eta_{\stepsize\opt}} = \frac{ \inprod{\measurement}{\linmap (\minvar + \stepsize\opt \direction)} + \norm{\linmap (\minvar + \stepsize\opt \direction)} \sqrt{ e (\minvar + \stepsize\opt \direction) - \eps^2} }{ \xminuseps } .
\]
Rearranging terms, and substituting for \mmode{e (\minvar + \stepsize\opt \direction)} from \eqref{eq:e-func-def}, results in the folllowing equation
\begin{equation}
\label{eq:gamma-opt-before-quadratic-eqn}
\begin{aligned}
\frac{ \inprod{\linmap (\minvar + \stepsize\opt \direction)}{\linmap (\direction)} }{ \inprod{\measurement}{\linmap (\direction)} } 
&- \frac{ \inprod{\measurement}{\linmap (\minvar + \stepsize\opt \direction)} }{\xminuseps} \\
& \quad = \frac{ \sqrt{ \inprod{\measurement}{\linmap (\minvar + \stepsize\opt \direction)}^2 - \norm{\linmap (\minvar + \stepsize\opt \direction)}^2 \xminuseps  } }{\xminuseps} .
\end{aligned}
\end{equation}
Finally, on squaring both sides of \eqref{eq:gamma-opt-before-quadratic-eqn}, we obtain the equation \mmode{a {\stepsize\opt}^2 + 2 b \stepsize\opt + c = 0}, for values of \mmode{a, b, c} given in \eqref{eq:rsu-definitions}. 

If \mmode{a = 0}, \mmode{\stepsize\opt = \nicefrac{-c}{2b}} is the only solution. Whereas, if \mmode{a \neq 0}, it must be observed that out of the two roots of the quadratic equation \mmode{a {\stepsize\opt}^2 + 2 b \stepsize\opt + c = 0}, one satisfies \eqref{eq:gamma-opt-before-quadratic-eqn} and the other satisfies
\[
\begin{aligned}
    \frac{ \inprod{\linmap (\minvar + \stepsize \direction)}{\linmap (\direction)} }{ \inprod{\measurement}{\linmap (\direction)} } 
&- \frac{ \inprod{\measurement}{\linmap (\minvar + \stepsize \direction)} }{\xminuseps} \\
& \quad = - \frac{ \sqrt{ \inprod{\measurement}{\linmap (\minvar + \stepsize \direction)}^2 - \norm{\linmap (\minvar + \stepsize\direction)}^2 \xminuseps  } }{\xminuseps} .
\end{aligned}
\]
Thus, the correct root of the quadratic equation can be picked by ensuring the criterion
\[
0 \ \leq \ \frac{ \inprod{\linmap (\minvar + \stepsize \direction)}{\linmap (\direction)} }{ \inprod{\measurement}{\linmap (\direction)} } - \frac{ \inprod{\measurement}{\linmap (\minvar + \stepsize \direction)} }{\xminuseps} .  
\]
The proof of the proposition is now complete.
\end{proof}

\vskip 0.2in
\bibliography{22-1380}

\end{document}